\theoremstyle{plain}
\newtheorem{prop}{Proposition}[section]
\newtheorem{thm}[prop]{Theorem}
\newtheorem{cor}[prop]{Corollary}
\newtheorem{lem}[prop]{Lemma}
\newtheorem*{thm*}{Theorem}
\theoremstyle{definition}
\newtheorem{dfn}[prop]{Definition}
\newtheorem{rem}[prop]{Remark}
\newtheorem{example}[prop]{Example}
\newtheorem{lab}[prop]{}
\newtheorem{notation}[prop]{Notation}
\newcommand{\C}{{\mathbb{C}}}
\renewcommand{\P}{{\mathbb{P}}}
\newcommand{\R}{{\mathbb{R}}}
\newcommand{\N}{{\mathbb{N}}}
\newcommand{\Z}{{\mathbb{Z}}}
\newcommand{\bbS}{{\mathbb{S}}}
\DeclareMathOperator{\im}{im}
\DeclareMathOperator{\interior}{int}
\DeclareMathOperator{\rk}{rk}
\DeclareMathOperator{\gl}{GL}
\DeclareMathOperator{\pgl}{PGL}
\DeclareMathOperator{\gram}{Gram}
\DeclareMathOperator{\Aut}{Aut}
\DeclareMathOperator{\spn}{span}
\DeclareMathOperator{\codim}{codim}
\DeclareMathOperator{\orth}{O} 
\newcommand{\Rx}{\mathbb{R}[\ul{x}]}
\newcommand{\Cx}{\mathbb{C}[\ul{x}]}
\newcommand{\vc}{\mathcal{V}}
\newcommand{\du}{{\scriptscriptstyle\vee}}
\renewcommand{\setminus}{\smallsetminus}
\newcommand{\ol}{\overline}
\newcommand{\ul}{\underline}
\renewcommand{\subset}{\subseteq}
\newcommand{\mS}{\mathcal{S}}
\newcommand{\mT}{\mathcal{T}}
\newcommand{\sy}[1]{\mathcal{S}_2 #1}
\newcommand{\sa}{semi-alge\-braic}
\newcommand{\bp}{base-point}
\newcommand{\gs}{Gram spectrahedron}
\newcommand{\gsa}{Gram spectrahedra}
\newcommand{\cc}{change of coordinates}
\newcommand{\ldr}{linear symmetric determinantal representation}
\newcommand{\qdr}{quadratic symmetric determinantal representation}
\renewcommand{\emptyset}{\varnothing}
\renewcommand{\setminus}{\smallsetminus}
\renewcommand{\epsilon}{\varepsilon}
\renewcommand{\theta}{\vartheta}
\newcommand{\todfn}[1]{\textit{#1}}
\author[Julian Vill]{Julian Vill}
\address{	
	University of Konstanz, Germany, Fachbereich Mathematik und Statistik,
	D-78457 Konstanz, Germany
}
\email{julian.vill@uni-konstanz.de}
\title{Gram Spectrahedra of Ternary Quartics}
\date{\today}
\begin{document}

\begin{abstract}
The Gram spectrahedron of a real form $f\in\R[\ul x]_{2d}$ parametrizes all sum of squares representations of $f$. It is a compact, convex, \sa\ set, and we study its facial structure in the case of ternary quartics, i.e. $f\in\R[x,y,z]_4$. 
We show that the \gs\ of every smooth ternary quartic has faces of dimension 2, and generically none of dimension 1, thus answering a question in \cite{psv2011} about the existence of positive dimensional faces on such \gsa. We complete the proof in \cite{psv2011} showing that the so called Steiner graph of every smooth quartic is isomorphic to $K_4\coprod K_4$.
Moreover, we show that the \gs\ of a generic psd ternary quartic contains points of all ranks in the Pataki interval.
\end{abstract}

\maketitle

\section{Introduction}

Given a real homogeneous polynomial $f\in\R[x_1,\dots,x_n]_{2d}$ of degree $2d$ ($d\in\N$) that is a sum of squares, i.e. that can be written in the form $f=\sum_{i=1}^r p_i^2$ for some $p_1,\dots,p_r\in\R[x_1,\dots,x_n]_d$. In general, there are many non-equivalent ways to write $f$ in such a form. The \gs\ of $f$ parametrizes all sos-representations up to orthogonal equivalence. It carries in a natural way the structure of a spectrahedron, especially, it is a closed, convex and \sa\ set. As such its boundary is the union of its faces.

The facial structure of spectrahedra has first been studied by Ramana and Goldman \cite{rg1995}. In \cite{scheiderer2018} Scheiderer formulates these results in a coordinate-free way which we introduce in \cref{sec:facial structure of gram spectrahedra}. To this end we define the \gs\ of $f$ as the set
\[
\gram(f)=\{\theta\in\sy{\R[x_1,\dots,x_n]_d}\colon \theta\succeq 0, \mu(\theta)=f\}
\]
consisting of all symmetric, positive semidefinite tensors $\theta$ that are mapped to $f$ via the Gram map $\mu\colon\sy{\R[x_1,\dots,x_n]_d}\to\R[x_1,\dots,x_n]_{2d},\quad \sum_{i=1}^r p_i\otimes q_i\mapsto \sum_{i=1}^r p_iq_i$.

We are mostly concerned with the case $n=3,\, d=2$ of ternary quartics. 
Sum of squares representations of ternary quartics have already been studied by Hilbert \cite{hilbert1888} in 1888, who showed that every real psd ternary quartic can also be written as a sum of squares, and every such quartic can be written as a sum of three squares. Later, it was shown in \cite{prss2004} that every smooth psd quartic admits exactly eight such representations as a sum of three squares (up to orthogonal equivalence).

In \cite{psv2011} the authors relate the sum of squares representation of length 3 to the 28 bitangent lines of the quartic curve and their combinatorial structure. A modern treatise of the combinatorics of bitangents can be found in Dolgachev's book \cite{dolgachev2012} which we regularly use as a reference. They also observe that the eight corresponding Gram tensors of rank 3 split into two groups of four such that in every group all line segments between two of them are contained in the boundary of $\gram(f)$. The stronger statement that additionally for any two Gram tensors in different groups the line segment is not part of the boundary is also claimed for generic forms but only partially proven.
We complete the proof of this statement for all smooth quartics. The graph having these eight Gram tensors as vertices is called the Steiner graph of the form $f$.

In this paper we largely complete the picture of the facial structure for \gsa\ of ternary quartics. First, we proof that the Steiner graph of any smooth psd ternary quartic is isomorphic to $K_4\coprod K_4$ (\cref{thm:steiner_graph_non_deg}). This mostly finishes our study of the rank 3 Gram tensors and we turn to tensors of rank 4 and 5. Up to now, these were not understood at all, it was even unknown whether there exist any faces of positive dimension other than the faces containing the line segments represented in the Steiner graph (\cite[p.18]{psv2011}). These rank 4 and rank 5 tensors form the whole boundary of $\gram(f)$ together with the eight rank 3 tensors for any smooth form $f$.
We show that faces of rank 4 may either be extreme points or have dimension 1. These 1-dimensional faces however, may only appear on \gsa\ of smooth forms $f$ if the automorphism group of the curve $\{f=0\}$ has even order (\cref{cor:one_dim_face_automorphism}). Faces of rank 5 are very different in  this aspect. Firstly, faces of rank 5 can only be of dimension 0 or 2 if the quartic is smooth (\cref{cor:possiblefaces}). Moreover, the Gram spectrahedron of a generic psd quartic has faces of rank 5 both of dimension 0 and 2 (\cref{cor:six_one_dim_faces}).
As the \gs\ of every smooth psd quartic has extreme points of rank 4 (\cref{prop:smooth_rk4_tensors}) this also shows that for generic psd quartics, the \gs\ has points of all ranks in the Pataki interval which contains the numbers $3,4$ and $5$. This is the smallest combination of dimension and size of the matrices where the Pataki interval has length three.

We briefly comment on the methods and the organization of the paper. There are two very different types of arguments used in the paper. To understand dimensions of faces, we make use of the fact that these can be calculated purely algebraically. Let $f\in\Sigma_{n,2d}$ and $F\subset\gram(f)$ a face. Let $\theta\in F$ be a relative interior point with image $U\subset\R[x_1,\dots,x_n]_d$. The dimension of $F$ is now given by $\binom{\dim U+1}{2}-\dim U^2$ where $U^2$ is the subspace of $\R[x_1,\dots,x_n]_{2d}$ spanned by all products $pq$ with $p,q\in U$. It is therefore enough to study possible dimensions of squares of subspaces.
On the other hand, if we want to show the existence of faces with certain properties we use the well-studied, classical approach of connecting sos-representations of a smooth form $f$ with the 28 bitangents of the curve $\{f=0\}$. This is especially necessary to show additional properties of the Steiner graph (e.g. \cref{prop:six_one_dim_faces}).

As to the organization of the paper. In \cref{sec:facial structure of gram spectrahedra} we review results about the facial structure of \gsa\ as shown in \cite{rg1995} and \cite{scheiderer2018}. \cref{sec:introduction to ternary quartics} contains a brief introduction to the classical theory of bitangents and determinantal representations of ternary quartics.
\cref{sec:Steiner graph} is devoted to the study of the Steiner graph. We start by recalling some facts about the configuration of bitangents, mostly using Dolgachev's book \cite{dolgachev2012} as a reference. We then show that the Steiner graph of any smooth psd ternary quartic is isomorphic to $K_4\coprod K_4$ (\cref{thm:steiner_graph_non_deg}), thereby also recalling the part of the proof contained in \cite{psv2011}.
In \cref{sec:the gram spectrahedron} we study faces of rank 4 and rank 5 on \gsa. If the psd ternary quartic $f$ is smooth, faces of rank 5 may only have dimension 0 or 2 (\cref{cor:possiblefaces}) and there always exists a face of dimension 2 (\cref{cor:six_one_dim_faces}). On the other hand, faces of rank 4 are always extreme points if the automorphism group of the curve $\{f=0\}$ does not have even order which is the case for generic $f$. In the case where the automorphism group does have even order, there may be faces of rank 4 and dimension 1 and we also relate this to the configuration of the bitangents (\cref{thm:1dimface}).

\section{Facial structure of Gram spectrahedra}
\label{sec:facial structure of gram spectrahedra}

We first explain the facial structure of \gsa. All results are originally due to Ramana and Goldman \cite{rg1995}, and the coordinate-free approach we are going to use has been introduced by Scheiderer \cite{scheiderer2018}.

Let $n,d\ge 1$. For any subspace $V\subset\Rx_d$, $\ul x=(x_1,\dots,x_n)$, we denote by $\sy{V}$ the second symmetric power of $V$ and identify it with the subset of $V\otimes V$ consisting of all symmetric tensors.
A symmetric tensor $\theta=\sum_{i=1}^r p_i\otimes q_i\in\sy{V}$ induces a symmetric bilinear map $b_\theta\colon V^\du\times V^\du\to\R,\, (\lambda,\nu)\mapsto\sum_{i=1}^r \lambda(p_i)\nu(q_i)$.
Furthermore, this also induces a linear map $\phi_\theta\colon V^\du\to V$ via $\lambda\mapsto b_\theta(\cdot,\lambda)=b_\theta(\lambda,\cdot)$.

We define the \todfn{image} of $\theta$ as the image of the linear map $\phi_\theta$, written as $\im\theta$, and the \todfn{rank} of $\theta$ as the dimension of its image, denoted by $\rk\theta$. Especially, this means that if $p_1,\dots,p_r$ and $q_1,\dots,q_r$ are linearly independent sets, then $\im\theta=\spn(\ul p)=\spn(\ul q)$.

We say that the tensor $\theta\in\sy{V}$ is \todfn{positive semidefinite}, $\theta\succeq 0$ , if $b_\theta(\lambda,\lambda)\ge 0$ for every $\lambda\in V^\du$ and write $\sy^+{V}$ for the set of all psd tensors in $\sy{V}$. Additionally, if $b_\theta(\lambda,\lambda)>0$ for every $0\neq \lambda\in V^\du$ we say that $\theta$ is \todfn{positive definite} and write $\theta\succ 0$.

After choosing a basis $p_1,\dots,p_r$ of $V$, every tensor $\theta\in\sy{V}$ can be written as $\theta=\sum_{i,j=1}^r a_{ij} p_i\otimes p_j$ with $a_{ij}=a_{ji}\in\R$. 
This identifies $\theta$ with the symmetric matrix $(a_{ij})_{i,j}\in\bbS^r$. Hence, $\sy{V}$ is identified with the space of all symmetric $r\times r$ matrices and $\sy^+{V}$ with the cone of all psd $r\times r$ matrices. Especially, $\sy^+{V}$ is a full-dimensional, convex cone inside $\sy{V}$.

Furthermore, since every real symmetric matrix can be diagonalized, we can write $\theta=\sum_{i=1}^r a_i q_i\otimes q_i$ for some $q_1,\dots,q_r\in V$ and $a_1,\dots,a_r\in\R$. Moreover, $\theta$ is psd if and only if all $a_i$ are non-negative.

The multiplication map $\Rx\otimes \Rx\to \Rx,\, f\otimes g\mapsto fg$ induces a multiplication map $\mu\colon \sy{\Rx_d}\to \Rx_{2d}$ on the symmetric tensors. This is exactly the Gram map, written in a coordinate-free way.
For any $f\in\Sigma_{n,2d}$, we therefore define 
\[
\gram(f):=\mu^{-1}(f)\cap\sy^+{\Rx_d}.
\]
We denote by $V^2$ the subspace of $\Rx_{2d}$ spanned by all product $pq$ with $p,q\in V$.

Let $f\in\Sigma_{n,2d}$ and let $\theta\in\gram(f)$. By diagonalizing, the Gram tensor $\theta$ gives rise to a sos-representation $f=\sum_{i=1}^r p_i^2$. If $p_1,\dots,p_r$ are linearly independent, the image of $\theta$ is given by the span of the forms $p_1,\dots,p_r$, and $r$ is equal to the rank of $\theta$, or equivalently the dimension of its image.

\begin{thm}[{\cite[Proposition 3.6.]{scheiderer2018}}]
\label{thm:dim_faces}
Let $f\in \Sigma_{n,2d}$. Let $F\subset\gram(f)$ be a face, and let $\theta\in F$ be a relative interior point of $F$. With $U:=\im(\theta)$, the dimension of $F$ is given by
\[
\dim F= \dim \sy{U}-\dim U^2=\binom{\dim U+1}{2}-\dim U^2.
\]

Moreover, if $W\subset V$ is a subspace such that $f=\sum_{i=1}^r p_i$ for some basis $p_1,\dots,p_r$ of $W$, then $\gram(f)$ has a face $G$ such that every relative interior point $\theta$ of $G$ satisfies $\im(\theta)=W$. The dimension of $G$ is given by $\sy{W}-\dim W^2$.
\end{thm}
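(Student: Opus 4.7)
The plan is to reduce the statement to the well-known facial structure of the PSD cone $\sy^+{V}$ itself. First I would record that the faces of $\sy^+{V}$ are in bijection with subspaces of $V$: to $U\subset V$ corresponds the face $\sy^+{U}\subset\sy^+{V}$ of psd tensors with image contained in $U$, of dimension $\binom{\dim U+1}{2}$ and with relative interior $\{\eta\in\sy^+{V}\colon\im\eta=U\}$. This is classical and follows from a block-matrix description after choosing a basis of $V$ adapted to the decomposition $V=U\oplus U'$.

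Now let $F\subset\gram(f)$ be a face with relative interior point $\theta$ and set $U:=\im\theta$. The core step is the identification
\[
F=\sy^+{U}\cap\mu^{-1}(f).
\]
For the inclusion $\supset$, given $\eta$ in the right-hand side I would exploit that $\theta$ lies in the relative interior of $\sy^+{U}$: for small $t>0$ the tensor $\eta':=(\theta-t\eta)/(1-t)$ is still psd with image in $U$, and $\mu(\eta')=f$, hence $\eta'\in\gram(f)$. Since $\theta$ is then an interior point of the segment $[\eta,\eta']\subset\gram(f)$, the face property of $F$ forces $\eta\in F$. For the converse inclusion, any $\eta\in F$ fits into a segment $\theta=t\eta+(1-t)\zeta$ with $\zeta\in F$ (again using that $\theta$ is in the relative interior of $F$), and summing two psd tensors yields $\im\eta\subset\im\theta=U$.

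Once this identification is in place, the dimension formula is immediate. Indeed, $F=\sy^+{U}\cap(\mu|_{\sy{U}})^{-1}(f)$, and since $\im\theta=U$ the point $\theta$ lies in the interior of $\sy^+{U}$ inside the ambient space $\sy{U}$. Hence $F$ contains an open neighborhood of $\theta$ in the affine subspace $(\mu|_{\sy{U}})^{-1}(f)$, and by the rank--nullity theorem
\[
\dim F=\dim\sy{U}-\dim\mu(\sy{U})=\binom{\dim U+1}{2}-\dim U^2.
\]

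For the second part I would consider the explicit Gram tensor $\theta_0:=\sum_{i=1}^r p_i\otimes p_i\in\gram(f)$; linear independence of the $p_i$ gives $\im\theta_0=W$. Letting $G$ be the minimal face of $\gram(f)$ containing $\theta_0$, the point $\theta_0$ sits in its relative interior, and the first part of the theorem yields $\dim G=\binom{\dim W+1}{2}-\dim W^2$. The same identification applied to $G$ also describes its relative interior as the set of tensors with image exactly equal to $W$. The only genuine obstacle throughout is the identification $F=\sy^+{U}\cap\mu^{-1}(f)$; specifically, the small-perturbation argument certifying $\theta$ as an interior point of a segment in $\gram(f)$ is the delicate point, while everything else is a direct dimension count.
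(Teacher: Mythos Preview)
The paper does not give its own proof of this statement; it is quoted verbatim from \cite[Proposition~3.6]{scheiderer2018} and used as a black box. Your argument is correct and is essentially the standard one (going back to Ramana--Goldman and reformulated by Scheiderer): identify the faces of $\sy^+{V}$ with subspaces, show that the minimal face of $\gram(f)$ through $\theta$ equals $\sy^+{U}\cap\mu^{-1}(f)$ via the two-sided segment argument, and then read off the dimension since $\theta$ is interior in $\sy^+{U}$. The only steps worth making slightly more explicit are (a) that for psd tensors $\im(\eta+\zeta)=\im\eta+\im\zeta$, which you invoke in the $\subset$ direction, and (b) in the second part, that \emph{every} relative interior point of $G$ has image exactly $W$; this follows because any two relative interior points can each be written as a proper convex combination involving the other, forcing both images to contain $W$, while $G\subset\sy^+{W}$ forces the reverse inclusion.
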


We also call complex tensors Gram tensors and only restrict to real tensors whenever necessary. I.e.,
let $f\in \Rx_{2d}$, then any $\theta\in\sy{\Cx_d}$ is called a \todfn{Gram tensor} of $f$ if $\mu(\theta)=f$ if we extend $\mu$ by tensoring everything with $\C$.

\begin{example}
Consider the Fermat quartic $f=x^4+y^4+z^4\in\R[x,y,z]_4$, then we have the rank 3 sos decomposition $f=(x^2)^2+(y^2)^2+(z^2)^2$. The corresponding Gram tensor can for example be written as $\theta=x^2\otimes x^2+y^2\otimes y^2+z^2\otimes z^2$, since we have $\mu(\theta)=(x^2)^2+(y^2)^2+(z^2)^2=f$ and $\theta$ is psd.

As with Gram matrices, any diagonalized Gram tensor of a form $f$ immediately gives rise to a sos representation of $f$.
\end{example}

\begin{example}
Consider again the Fermat quartic $f=x^4+y^4+z^4$. Since $f$ is positive definite, we see that $f\in\interior\Sigma_{3,4}$. Then the dimension of the \gs\ of $f$ is given by 
\[
\dim\gram(f)=\binom{\dim \Rx_2 +1}{2}-\dim \Rx_4=21-15=6.
\]
Moreover, the Gram tensor $\theta=x^2\otimes x^2+y^2\otimes y^2+z^2\otimes z^2$ is an extreme point of $\gram(f)$. Indeed, the six forms $x^4,x^2y^2,x^2z^2,y^4,y^2z^2,z^4$ are linearly independent and therefore $\theta$ is an extreme point of $\gram(f)$ by \cref{thm:dim_faces}.
\end{example}

\begin{rem}
If we are working with a fixed number of variables and a fixed degree, the dimension of a face only depends on the dimension of $U$ and the dimension of $U^2$. This means that we can determine dimensions purely algebraically.
It turns out that it is more convenient to talk about the codimensions of $U$ and $U^2$ some of the time. In these cases this should always be understood as the codimension of $U$ as a subspace of $\Rx_d$. Analogously reading $U^2$ as a subspace of $\Rx_{2d}$.
\end{rem}

A special kind of face is the extreme point. These are by definition faces of dimension $0$. The next result is originally due to Pataki in \cite{pataki2000}, this formulation can be found in \cite[Proposition 3.1.]{cpsv2017}. It determines bounds for the ranks of extreme points of generic spectrahedra and \gsa.

\begin{prop}
\label{prop:pataki_range}
Let $\dim V=n$, let $L\subset \sy{V}$ be an affine-linear subspace with $\dim L=m$, and let $S=L\cap\sy^+{V}$.
\begin{enumerate}
\item For every extreme point $\theta$ of $S$, the rank $\rk\theta=r$ satisfies
\[
m+\binom{r+1}{2}\le\binom{n+1}{2}.
\]
\item When $L$ is chosen generically among all affine-linear subspaces of dimension $m$, every $\theta\in S$ satisfies
\[
m\ge \binom{n-\rk\theta+1}{2}.
\]
\end{enumerate}
\end{prop}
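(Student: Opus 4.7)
The plan is to prove each part by a dimension count: (i) using the face structure of $\sy^+ V$, and (ii) using the rank stratification of $\sy V$.

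For (i), I take an extreme point $\theta \in S$, put $U := \im\theta$ and $r := \rk\theta = \dim U$. The face of $\sy^+ V$ whose relative interior contains $\theta$ is canonically identified with $\sy^+ U$, and its affine hull is the linear subspace $\sy U \subset \sy V$ of dimension $\binom{r+1}{2}$. Since $\theta$ lies in the relative interior of $\sy^+ U$, every point of $L \cap \sy U$ close enough to $\theta$ still lies in $\sy^+ U \subset \sy^+ V$; extremality of $\theta$ in $S$ therefore forces $L \cap \sy U = \{\theta\}$. Two affine subspaces of $\sy V$ that meet in a single point have dimensions summing to at most $\dim \sy V = \binom{n+1}{2}$, so $m + \binom{r+1}{2} \le \binom{n+1}{2}$.

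For (ii), the key classical input is that the locus $\vc_k \subset \sy V$ of symmetric tensors of rank at most $k$ is a closed algebraic subvariety of codimension $\binom{n-k+1}{2}$, cut out in coordinates by the $(k+1) \times (k+1)$ minors of the associated symmetric matrix. Standard dimension counting implies that a Zariski-generic affine subspace of dimension $m$ is disjoint from any fixed subvariety of codimension strictly greater than $m$. Imposing this simultaneously for each $k$ (finitely many open conditions on $L$), we conclude that for generic $L$, any $\theta \in S$ with $r = \rk\theta$ forces $L \cap \vc_r \ne \emptyset$, hence $m \ge \codim \vc_r = \binom{n-r+1}{2}$.

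The main obstacle I expect is in (ii): carefully setting up genericity in the affine Grassmannian and handling the nested singular loci $\vc_{k-1} \subset \vc_k$, which have strictly larger codimension and therefore impose no additional constraints. This is essentially a real Bertini/Kleiman-type transversality argument; the passage from complex to real is mild, since we only need emptiness of certain intersections, a condition controlled by the complex geometry. Part (i) becomes essentially automatic once one invokes the face structure of $\sy^+ V$, since $\sy^+ U$ being a face of $\sy^+ V$ ensures that small movements of $\theta$ inside $L \cap \sy U$ really do remain in $\sy^+ V$.
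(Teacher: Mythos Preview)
The paper does not actually prove this proposition; it is stated with attribution to Pataki \cite{pataki2000} (in the formulation of \cite[Proposition~3.1]{cpsv2017}) and then used as a black box. So there is no ``paper's own proof'' to compare against.

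Your argument is the standard one and is correct. For (i), the identification of the face of $\sy^+ V$ through $\theta$ with $\sy^+ U$ and the conclusion $L\cap\sy U=\{\theta\}$ from extremality are exactly right; translating by $\theta$ turns this into a transversality of linear subspaces and gives the inequality. For (ii), the codimension of the determinantal locus $\vc_k=\{\rk\le k\}$ in $\sy V$ is indeed $\binom{n-k+1}{2}$, and the assertion that a generic affine $m$-plane misses any fixed subvariety of codimension $>m$ is the usual incidence-variety dimension count (projectivize and use that the incidence correspondence over the Grassmannian has fibres of dimension bounded by $m-\codim\vc_k<0$). Your remark that the real case follows because the bad locus in the complex affine Grassmannian is a proper Zariski-closed subset defined over $\R$, hence cannot contain all real points, is the right way to pass from $\C$ to $\R$.

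One minor comment: your final paragraph reads as though the proof is still in progress (``the main obstacle I expect\ldots''). The content is fine, but in a write-up you should state the transversality fact cleanly rather than flag it as a potential difficulty; the singular loci $\vc_{k-1}\subset\vc_k$ are genuinely irrelevant here since you only use the codimension of $\vc_k$, not any smoothness.
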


\begin{example}
Let $n=3,\,d=2$, and let $f\in\Sigma_{3,4}$ be a generic ternary quartic. Then the ranks $r$ of extreme points of the \gs\ $\gram(f)$, satisfy the inequalities
\[
\binom{r+1}{2}\le \dim \Rx_4=15 \text{ and } \binom{7}{2} \ge \binom{7-r}{2}+15
\]
which is equivalent to $r\le 5$ and $r\ge 3$. Hence, the Pataki interval is given by $3\le r\le 5$.

In this case, a generic form $f$ does have a Gram tensor of every rank in the Pataki interval as we will see in \cref{rem:ternary_summary}.
\end{example}

\section{Introduction to ternary quartics}
\label{sec:introduction to ternary quartics}

For the rest of this article, we fix $n=3$. We use the notation $A=\C[x,y,z],\,\ul x=(x,y,z),\, \Rx=\R[x,y,z]$ and write $\Sigma:=\Sigma_{3,4}$.

Since we will do some calculations in this section, we also sometimes fix a basis of $\Rx_2$ and write Gram tensors as Gram matrices wrt to this basis. Whenever we do so, we use the monomial basis ordered as follows: $x^2,y^2,z^2,xy,xz,yz$.
Let $f\in \Rx_4$ with $f=\sum_{\alpha\in\Z_+^3,\vert\alpha\vert=4} c_\alpha \ul x^\alpha$ and $c_\alpha\in\R$ for $\alpha\in\Z_+^3,\vert\alpha\vert=4$, then the Gram matrices of $f$ are given by
\[
\begingroup
\renewcommand*{\arraystretch}{1.4}
\begin{pmatrix}
c_{400} & \lambda_1 & \lambda_2 & \frac{1}{2}c_{310} & \frac{1}{2}c_{301} & \lambda_4\\
\lambda_1 & c_{040} & \lambda_3 & \frac{1}{2}c_{130} & \lambda_5 & \frac{1}{2}c_{031}\\
\lambda_2 & \lambda_3 & c_{004} & \lambda_6 & \frac{1}{2}c_{103} & \frac{1}{2}c_{013}\\
\frac{1}{2}c_{310} & \frac{1}{2} c_{130} & \lambda_6 & c_{220}-2\lambda_1 & \frac{1}{2}c_{211}-\lambda_4 & \frac{1}{2}c_{121}-\lambda_5\\
\frac{1}{2} c_{301} & \lambda_5 & \frac{1}{2} c_{103} & \frac{1}{2}c_{211}-\lambda_4 & c_{202}-2\lambda_2 & \frac{1}{2} c_{112}-\lambda_6\\
\lambda_4 & \frac{1}{2}c_{031} & \frac{1}{2}c_{013} & \frac{1}{2}c_{121}-\lambda_5 & \frac{1}{2}c_{112}-\lambda_6 & c_{022}-2\lambda_3
\end{pmatrix}
\endgroup
\]
for choices of $\lambda_1,\dots,\lambda_6\in\C$. Denote by $X$ the row vector $(x^2,y^2,z^2,xy,xz,yz)$ containing the six monomials of degree 2. Then by definition $XGX^T=f$ for every Gram matrix $G$ of $f$.

For a positive definite ternary quartic, the dimension of its \gs\ is 6 and wrt $X$ is given by all matrices of the form above that are psd.

Next, we give an introduction to bitangents and determinantal representations of ternary quartics. A modern and more abstract point of view can be found in \cite{dolgachev2012}. We mostly use a similar notation to \cite{psv2011}.

\begin{dfn}
Let $f\in \Rx_4$ be smooth and let $L\subset\P^2$ be a line. Then $L$ is called a \todfn{bitangent} of $f$, if all intersection points of $\vc(f)$ and $L$ have even multiplicity.
\end{dfn}

We refer to \cite{fulton1969} for an introduction to intersection numbers of plane curves.

An important tool for the main proof of \cref{sec:Steiner graph} is Noether's $AF+BG$ Theorem, a special version of Lasker's Theorem. A proof as well as several historical notes can be found in \cite{egh1996} or \cite{fulton1969}.

\begin{thm}[Noether's $AF+BG$ Theorem, {\cite[section 5.5.]{fulton1969},\cite[Theorem 8]{egh1996}}]
\label{thm:afbg}
Let $f,g\in\C[\ul x]$ be forms of degrees $d_1,d_2$ such that $\vc(f,g)$ is finite, and let $h\in\C[\ul x]_d$ with $d\ge d_1,d_2$. If for every point $P\in\vc(f,h)$, the intersection multiplicity of $f$ and $h$ in $P$ is at least the intersection multiplicity of $f$ and $g$ in $P$, then there exist $a\in\C[\ul x]_{d-d_1}$ and $b\in\C[\ul x]_{d-d_2}$ such that $h=af+bg$.
\end{thm}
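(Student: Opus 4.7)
The plan is to prove the local statement that $h_P\in(f,g)\mathcal{O}_{\P^2,P}$ at every $P\in\vc(f,g)$, and then patch these local factorizations into the single global identity $h=af+bg$ via sheaf cohomology on $\P^2$. First a preliminary observation: the hypothesis forces $\vc(f,g)\subset\vc(f,h)$, since any $P\in\vc(f,g)\setminus\vc(f,h)$ would give $i(P;f,h)=0<i(P;f,g)$, contradicting the stated inequality (read with the convention $i(P;f,h)=0$ when $P\notin\vc(f,h)$). So the hypothesis is equivalently a lower bound on $i(P;f,h)$ at every $P\in\vc(f,g)$.

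For the globalization step, finiteness of $\vc(f,g)$ makes $f,g$ a regular sequence of height two in $\C[\ul x]$, so the Koszul complex is a resolution of the ideal sheaf $\mathcal{I}\subset\mathcal{O}_{\P^2}$ of $\vc(f,g)$:
\[
0 \to \mathcal{O}_{\P^2}(-d_1-d_2) \to \mathcal{O}_{\P^2}(-d_1)\oplus\mathcal{O}_{\P^2}(-d_2) \to \mathcal{I} \to 0,
\]
with the last arrow sending $(a,b)\mapsto af+bg$. Twisting by $\mathcal{O}(d)$ and taking cohomology, the vanishing $H^1(\P^2,\mathcal{O}(k))=0$ for all $k\in\Z$ yields surjectivity of
\[
H^0(\P^2,\mathcal{O}(d-d_1))\oplus H^0(\P^2,\mathcal{O}(d-d_2))\twoheadrightarrow H^0(\P^2,\mathcal{I}(d)).
\]
Thus once I know that $h\in\C[\ul x]_d$ defines a global section of $\mathcal{I}(d)$ — equivalently, $h_P\in(f,g)\mathcal{O}_{\P^2,P}$ for every point $P$ — any preimage gives the required $a\in\C[\ul x]_{d-d_1}$ and $b\in\C[\ul x]_{d-d_2}$ with $h=af+bg$.

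The remaining local inclusion is what I expect to be the main obstacle. Fix $P\in\vc(f,g)$ and set $R=\mathcal{O}_{\P^2,P}$, a $2$-dimensional regular local ring in which $(f,g)R$ is $\mm_P$-primary, so that $f,g$ is a regular sequence with $\dim_\C R/(f,g)=i(P;f,g)$. The hypothesis, together with the finiteness of $i(P;f,g)$, forces $f$ and $h$ to share no common factor at $P$, so $\dim_\C R/(f,h)=i(P;f,h)$ as well. Passing to the $1$-dimensional Cohen–Macaulay quotient $S=R/(f)$, on which $\bar g$ is a nonzero-divisor, the task is to show that the length inequality $\mathrm{length}(S/\bar hS)\ge\mathrm{length}(S/\bar gS)$ implies $\bar h\in\bar gS$.

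When $\vc(f)$ is smooth at $P$, $S$ is a DVR and the implication is immediate from comparing valuations. At singular points of $\vc(f)$ the comparison is subtle because of conductor corrections between $S$ and its normalization $\tilde S$, and this is where the real work of the theorem lies. My plan is to argue branch-by-branch: applying the hypothesis to each irreducible factor of $f$ at $P$ to obtain valuation inequalities $v_\gamma(h)\ge v_\gamma(g)$ along every branch $\gamma$ of $\vc(f)$ through $P$, deducing that $h/g$ is integral over $S$, and then invoking the classical characterization of Noether's conditions (as in Fulton, section 5.5) to pass from integrality in $\tilde S$ back to actual membership in $\bar gS$. Once this local step is in place, the sheaf-theoretic globalization of the second paragraph finishes the proof formally.
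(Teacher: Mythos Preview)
The paper does not give its own proof of this statement; it simply quotes the result with references to Fulton and to Eisenbud--Green--Harris. So there is no paper-side argument to compare against, and I will only comment on your proposal.

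Your global step is clean and correct: once $h_P\in(f,g)\mathcal{O}_{\P^2,P}$ for all $P$, the Koszul resolution together with $H^1(\P^2,\mathcal{O}(k))=0$ indeed produces $a,b$ in the right degrees. The local step at smooth points of $\vc(f)$ is likewise fine: $S=\mathcal{O}_{\P^2,P}/(f)$ is a DVR, and $\mathrm{length}(S/\bar hS)\ge\mathrm{length}(S/\bar gS)$ is exactly $v(h)\ge v(g)$, hence $\bar h\in\bar gS$.

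The gap is at singular points, and it is not a gap you can close: the theorem \emph{as stated} is false there. Take $f=y^2z-x^3$, $g=x$, $h=yz^2$, so $d_1=3$, $d_2=1$, $d=3$. Then $\vc(f,h)=\{[0{:}0{:}1],[0{:}1{:}0]\}$; at the cusp $[0{:}0{:}1]$ one has $i(f,h)=3\ge 2=i(f,g)$, and at the flex $[0{:}1{:}0]$ one has $i(f,h)=6\ge 1=i(f,g)$. Yet setting $x=0$ in a hypothetical identity $yz^2=a(y^2z-x^3)+bx$ with $a\in\C$ forces $yz^2=ay^2z$, which is impossible. Locally at the cusp this is exactly the failure you were worried about: in $S=\C[[t^2,t^3]]$ one has $\bar g=t^2$, $\bar h=t^3$, the length inequality $3\ge 2$ holds, $h/g=t$ is integral over $S$, but $t\notin S$ so $\bar h\notin\bar gS$. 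Your proposed step ``pass from integrality in $\tilde S$ back to actual membership in $\bar gS$'' therefore cannot be carried out in general, and the branch-by-branch plan also fails because the hypothesis only bounds the \emph{total} multiplicity $i(P;f,h)$, not its distribution over branches.

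What is true, and what Fulton actually proves in section~5.5, is the $AF+BG$ theorem under Noether's local conditions $h_P\in(f,g)\mathcal{O}_{\P^2,P}$, together with the statement that the intersection-multiplicity inequality is \emph{equivalent} to Noether's condition when $P$ is a \emph{simple} point of $\vc(f)$. The paper only ever applies \cref{thm:afbg} with $f$ a smooth quartic, so every relevant $P$ is simple and your DVR argument already handles all cases that actually occur. In short: your proof is complete and correct once one adds the hypothesis that $\vc(f)$ is smooth (or merely smooth at the points of $\vc(f,g)$), which is the form in which the paper uses the result; without that hypothesis the statement you were asked to prove is not true.
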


\begin{rem}
Let $L=\vc(l)$ for some linear form $l\in A_1$. By \cref{thm:afbg} $L$ is a bitangent of $f$ if and only if there exist $q\in A_2$ and $h\in A_3$ such that $f=q^2+hl$.

Most of the time, we work with such an equation rather than considering bitangents as subsets of $\P^2$.
\end{rem}

\begin{notation}
Let $L=\vc(l),\, l\in A_1$, be a bitangent of $f$. For convenience, we also call the linear form $l$ a bitangent of $f$, as well as its projective point $[l]\in\P A_1$.

We then say that two bitangents $l_1,l_2\in A_1$ are different if $\vc(l_1),\vc(l_2)\subset\P^2$ are different lines.

Especially, when counting bitangents, this should be understood as counting lines $L\subset\P^2$, and not counting forms $l\in A_1$, as every non-zero scalar multiple of such a linear form is also a bitangent.
\end{notation}

In 1834, Pl\"ucker showed that every smooth ternary quartic has exactly 28 different bitangents. As was more recently shown in \cite{lehavi2005}, the 28 bitangents uniquely determine the smooth quartic $f\in A_4$ up to a non-zero scalar.

\begin{thm}[{\cite{pluecker1834}}]
If $f\in \Rx_4$ is smooth, then $f$ has exactly 28 different bitangents.
\end{thm}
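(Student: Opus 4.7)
The plan is to identify the bitangents of $f$ with the odd theta characteristics of the smooth plane curve $C=\vc(f)\subset\P^2_\C$, and then invoke the classical count $2^{g-1}(2^g-1)$ of odd theta characteristics on a smooth curve of genus $g$. A smooth plane quartic has genus $g=\binom{d-1}{2}=3$, so this number equals $4\cdot 7=28$, which is the desired count.

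To set up the dictionary I would exploit that by adjunction the canonical sheaf is $K_C=\mathcal{O}_C(1)$, so $C\hookrightarrow\P^2$ is the canonical embedding; in particular $C$ is not hyperelliptic. Given a bitangent $L=\vc(l)$, B\'ezout gives $C\cdot L$ of degree $4$ with all multiplicities even, hence $C\cdot L=2D$ for a unique effective divisor $D$ of degree $2$, covering both the generic case $2P+2Q$ and the flex case $4P$ uniformly. The relation $2D\sim K_C$ identifies $D$ as a theta characteristic, and it is \emph{odd} since $h^0(D)\ge 1$.

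Conversely, for an odd theta characteristic $D$ I would argue $h^0(D)=1$: otherwise $|D|$ would be a base-point-free pencil of degree $2$, exhibiting $C$ as hyperelliptic and contradicting $K_C=\mathcal{O}_C(1)$. Therefore $D$ is represented by a unique effective degree-$2$ divisor, and under the canonical embedding the canonical divisor $2D$ is cut out by a unique line $L_D\subset\P^2$, which is then a bitangent. Injectivity of $D\mapsto L_D$ is automatic since $L_D$ recovers $C\cdot L_D=2D$, hence $D$. This gives a bijection between the bitangents of $f$ and the odd theta characteristics of $C$.

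The hard part will be verifying that $h^0(D)=1$ for \emph{every} odd $D$, so that the correspondence is a bijection rather than merely a surjection onto the set of bitangents, and making sure the flex case does not produce extra or double-counted lines. The final count $\#\{\text{odd theta characteristics}\}=2^{g-1}(2^g-1)$ on a smooth genus-$g$ curve is classical and follows from the analysis of the mod-$2$ Weil pairing on $\mathrm{Pic}^{g-1}(C)[2]$; I would cite this (e.g.\ from Dolgachev's book \cite{dolgachev2012}, already referenced in the paper) rather than reprove it.
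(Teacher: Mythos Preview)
The paper does not prove this theorem at all; it merely states it with a citation to Pl\"ucker's 1834 paper, treating it as classical background. Your proposal supplies a correct modern proof via the bijection between bitangents and odd theta characteristics on the canonical curve $C$, which is exactly the standard argument (and the one underlying the treatment in \cite{dolgachev2012}, which the paper already uses as its main reference for this circle of ideas). So your approach is not the paper's approach simply because the paper has none, but it is entirely appropriate and in the spirit of the references the paper relies on.

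One small presentational point: in the forward direction you write ``it is odd since $h^0(D)\ge 1$''. As stated this is not a valid implication; $h^0(D)\ge 1$ by itself does not force parity. What you need is the non-hyperellipticity argument you give in the converse direction, which shows $h^0(D)\le 1$ for any effective degree-$2$ divisor on $C$, hence $h^0(D)=1$ is odd. Since you already have that argument written down, this is just a matter of invoking it earlier (or once, for both directions) rather than a genuine gap.
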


Using the 28 bitangents of a smooth form $f\in\Sigma$, it was shown in \cite{psv2011} that one can calculate all \ldr s of $f$, as well as all eight sos representations of $f$ as a sum of three squares.

\begin{dfn}
Let $f\in \Rx_4$. Then a \todfn{\ldr} of $f$ is a matrix $M=Ax+By+Cz$ such that $f=\det(M)$, and $A,B,C$ are symmetric $4\times 4$ matrices over $\C$.
\end{dfn}

Let $f\in \Rx_4$, then two \ldr s of $f$ are called \todfn{equivalent} if they are conjugate to each other under the action of $\gl_4(\C)$.

\begin{thm}[{\cite{hesse1855}}]
Every smooth quartic $f\in \Rx_4$ has exactly 36 inequivalent \ldr s.
\end{thm}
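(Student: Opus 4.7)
The plan is to prove Hesse's theorem via the classical correspondence between symmetric linear determinantal representations and theta characteristics on the associated curve. Let $C=\vc(f)\subset\P^2$, which is a smooth plane quartic of genus $g=(d-1)(d-2)/2=3$, and recall that $K_C\cong\mathcal{O}_C(1)$.

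First I would set up the Beauville--Catanese--Dolgachev correspondence. Given a \ldr\ $f=\det(M)$ with $M=Ax+By+Cz$ a symmetric $4\times 4$ matrix, the map $M\colon\mathcal{O}_{\P^2}(-1)^{\oplus 4}\to\mathcal{O}_{\P^2}^{\oplus 4}$ drops rank precisely on $C$, and its cokernel is (the pushforward of) a line bundle $\theta$ on $C$. Symmetry of $M$ is equivalent to a self-duality $\theta\cong\theta^\vee\otimes K_C$, i.e.\ $\theta^{\otimes 2}\cong K_C$, so $\theta$ is a theta characteristic. A short cohomology computation using the minimality of the resolution shows $H^0(C,\theta)=0$, i.e.\ $\theta$ is \emph{non-effective}. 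Conversely, every non-effective theta characteristic $\theta$ on $C$ admits a minimal free resolution of the form $0\to\mathcal{O}_{\P^2}(-1)^{\oplus 4}\xrightarrow{M}\mathcal{O}_{\P^2}^{\oplus 4}\to\theta\to 0$ with $M$ symmetric, and the two $\gl_4(\C)$-orbits of choices (the $P\mapsto PMP^T$ action on the source) recover the equivalence classes of \ldr s. Hence inequivalent \ldr s of $f$ are in bijection with non-effective theta characteristics on $C$.

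Next I would count theta characteristics on $C$. The set of theta characteristics is a torsor under the $2$-torsion $J(C)[2]\cong(\Z/2)^{2g}$, so there are $2^{2g}=64$ of them. The mod $2$ reduction of $h^0(\theta)$ defines the classical Arf/parity quadratic form on $J(C)[2]$, and its number of even (resp.\ odd) zeros equals $2^{g-1}(2^g+1)$ (resp.\ $2^{g-1}(2^g-1)$). For $g=3$ this yields exactly $36$ even and $28$ odd theta characteristics; the $28$ odd ones are exactly the divisor classes $[P+Q]$ cut out by the $28$ bitangents, since a bitangent meets $C$ in $2P+2Q$ with $2(P+Q)\sim K_C$ and $h^0(P+Q)=1$.

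Finally I would argue that on a smooth plane quartic every even theta characteristic is non-effective. Since $\deg\theta=g-1=2$, Clifford's inequality gives $h^0(\theta)\le(g+1)/2=2$, and if equality held then $C$ would be hyperelliptic. But a smooth plane quartic is canonically embedded in $\P^2$, hence non-hyperelliptic, so no $\theta$ has $h^0(\theta)=2$. The parity forces even theta characteristics to satisfy $h^0(\theta)=0$, so all $36$ even theta characteristics are non-effective. Combining with the first step yields exactly $36$ inequivalent \ldr s of $f$.

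The main obstacle is the precise cohomological matching in the first paragraph: one must verify that the cokernel of a symmetric $M$ really is a theta characteristic (the duality $\theta^\vee\otimes K_C\cong\theta$ comes from applying $\Hom(-,\omega_{\P^2})$ to the resolution and using $\omega_{\P^2}\cong\mathcal{O}(-3)$ together with $\mathcal{O}_C(d-3)\cong K_C$), and that the $\gl_4(\C)$-action on resolutions exactly captures equivalence of symmetric matrices. Once this bijection is in place, the counting is a formal consequence of the parity formula for quadratic forms on $(\Z/2)^{2g}$ and of Clifford's theorem.
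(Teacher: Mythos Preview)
The paper does not prove this theorem; it is quoted as a classical result with a reference to Hesse and used as input for the subsequent discussion of Cayley octads and Steiner complexes. So there is nothing to compare your argument against in the paper itself.

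Your outline is the standard modern proof and is essentially correct. A couple of minor points worth tightening. First, the phrase ``the two $\gl_4(\C)$-orbits of choices'' is garbled; what you want to say is that the choice of a minimal symmetric resolution of a given $\theta$ is unique up to the congruence action $M\mapsto PMP^T$ of $\gl_4(\C)$, which is exactly the equivalence relation on \ldr s, so the map $M\mapsto\theta$ descends to a bijection between equivalence classes and non-effective theta characteristics. (Note also that the paper's word ``conjugate'' should be read as congruence, since conjugation does not preserve symmetry.) Second, your application of Clifford is fine once you observe that a theta characteristic with $h^0(\theta)>0$ is automatically special because $K_C-\theta\sim\theta$; then $h^0(\theta)\le\deg(\theta)/2+1=2$, with equality forcing $C$ hyperelliptic, impossible for a canonically embedded plane quartic. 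Hence even theta characteristics satisfy $h^0(\theta)=0$, giving the $36$ non-effective classes. With these clarifications the argument goes through; for the precise statement of the bijection and the verification that the cokernel sheaf is a line bundle on $C$ with the asserted self-duality, the canonical reference is Beauville, \emph{Determinantal hypersurfaces}, or Dolgachev \cite[\S 4.1]{dolgachev2012}.
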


\begin{dfn}
Let $f\in \Rx_4$ be a smooth quartic and fix one \ldr\ $M=Ax+By+Cz$ of $f$. The \todfn{Cayley octad of $M$} is defined as the eight solutions $O_1,\dots,O_8\in\P^3$ of the system $uAu^T=uBu^T=uCu^T=0$ with $u=(u_0:\dots:u_3)\in\P^3$.
\end{dfn}

Intersecting three quadratic hypersurfaces in $\P^3$, we expect there to be exactly eight different intersection points $O_1,\dots,O_8$. Using the fact that $f$ is smooth this is indeed true as proven in \cite[Proposition 6.3.3.]{dolgachev2012}.

This enables us to enumerate the bitangents of $f$ in the following way.

\begin{prop}[{\cite[Proposition 3.3.]{psv2011}}]
Let $f\in \Rx_4$ be a smooth quartic. Let $M$ be a \ldr\ of $f$ with Cayley octad $O_1,\dots,O_8$. Then the 28 bitangents of $f$ are given by $L_{ij}$ with $L_{ij}=\vc(O_iMO_j^T)$, $1\le i<j\le 8$.
\end{prop}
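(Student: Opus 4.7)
The plan is to show that each linear form $l_{ij} := O_iMO_j^T$ is a bitangent of $f$ in the sense of the remark following \cref{thm:afbg} (i.e.\ $f = q^2 + l_{ij}\,h$ for some $q\in A_2$, $h\in A_3$), and then to deduce the statement from Pl\"ucker's theorem together with a distinctness argument for the lines $L_{ij}$.

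For the main computation, fix $1 \le i < j \le 8$ and pick $P\in\gl_4(\C)$ whose first two columns are $O_i$ and $O_j$. The congruence $\tilde M := P^T M P$ is a symmetric $4\times 4$ matrix of linear forms whose top-left $2\times 2$ block is
\[
K \;=\; \begin{pmatrix} 0 & l_{ij} \\ l_{ij} & 0 \end{pmatrix},
\]
since $O_i M O_i^T$ and $O_j M O_j^T$ vanish identically in $x,y,z$ by the defining equations of the Cayley octad. Writing $\tilde M = \bigl(\begin{smallmatrix} K & N \\ N^T & R\end{smallmatrix}\bigr)$ and applying the Schur-complement identity $\det \tilde M = \det K \cdot \det(R - N^T K^{-1} N)$ as a rational equality, then multiplying through by $-l_{ij}^2 = \det K$ to clear denominators, yields an identity of polynomials
\[
(\det P)^2\, f \;=\; \det \tilde M \;=\; (\det N)^2 \;+\; l_{ij}\cdot h
\]
for some explicit cubic form $h\in A_3$. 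Dividing by $(\det P)^2$ exhibits $l_{ij}$ as a bitangent of $f$, provided $l_{ij}\not\equiv 0$; and $l_{ij}\equiv 0$ would force $f$ to be a square in $A_4$, contradicting smoothness of $\{f=0\}$.

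It remains to show that the $\binom{8}{2}=28$ lines $L_{ij}$ are pairwise distinct; then Pl\"ucker's theorem and a cardinality match force these to exhaust all bitangents of $f$. I expect this distinctness step to be the main obstacle: the Schur-complement identity treats each pair in isolation, so it does not a priori preclude a coincidence $L_{ij}=L_{kl}$ for $\{i,j\}\neq\{k,l\}$. The approach I would take is a specialization argument. The pairs $(f,M)$ consisting of a smooth quartic together with a linear symmetric determinantal representation form an irreducible parameter space, and the locus on which two of the $L_{ij}$ coincide is closed; hence it suffices to exhibit a single pair, e.g.\ the Fermat quartic with a convenient symmetric representation, for which a direct computation shows that the $28$ linear forms $l_{ij}$ are pairwise non-proportional. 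A cleaner conceptual alternative is the classical correspondence between ordered pairs of Cayley octad points and odd theta characteristics on $\{f=0\}$, which makes the injectivity of $\{i,j\}\mapsto L_{ij}$ transparent and short-circuits the specialization argument.
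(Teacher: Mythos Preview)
The paper does not actually prove this proposition; it is quoted from \cite{psv2011} with a citation and no argument, so there is no proof in the paper to compare your attempt against.

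That said, your Schur-complement approach is correct and is essentially the classical argument. The computation showing $(\det P)^2 f \equiv (\det N)^2 \pmod{l_{ij}}$ goes through as you describe, and the observation that $l_{ij}\equiv 0$ forces $\det\tilde M$ to equal $(\det N)^2$ identically (hence $f$ a square) is the right way to rule out degeneracy. For the distinctness of the $28$ lines, the theta-characteristic bijection you mention is the cleaner route and is how this is handled in the references the paper relies on (e.g.\ \cite{dolgachev2012}); the specialization argument also works but, as you note, requires irreducibility of the parameter space of pairs $(f,M)$, which is true but is itself a nontrivial input.
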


\begin{rem}
(i) Since $M$ is symmetric, we have $O_iMO_j^T=O_jMO_i^T$ for every $i,j\in\{1,\dots,8\}$, $i\neq j$. Thus, for every $i,j\in\{1,\dots,8\}$, $i\neq j$, the line $L_{ij}=\vc(O_iMO_j^T)=L_{ji}$ is a bitangent of $f$ and we do not have to pay attention to the order of the indices.

(ii) With a fixed Cayley octad, we write $b_{ij}\in A_1$ ($1\le i < j\le 8,\, i\neq j$) for the bitangents of $f$ where $L_{ij}=\vc(b_{ij})$ and set $b_{ij}:=b_{ji}$ for all $1\le j<i\le 8,\, i\neq j$. This is mostly for convenience to avoid unnecessary scaling later on.

(iii) This enumeration of the bitangents is dependent on the choice of a \ldr\ of $f$ and on the order of the Cayley octad.

(iv) By \cite[\S 0]{vinnikov1993} every smooth real quartic has a \textit{real} \ldr. The Cayley octad corresponding to this \ldr\ contains only real points or points appear in complex conjugate pairs.
\end{rem}

\begin{rem}
Recall that we defined Gram tensors not only as the real tensors that are mapped to a form via the Gram map, but also the complex ones.

By Gram tensor we always mean a possibly complex one, and where necessary we say real Gram tensor to specify.
\end{rem}

Next, we follow the algorithm in \cite[\S 5]{psv2011} to construct all 63 Gram tensors of $f$ and to determine combinatorially from a Cayley octad which are real.

\begin{dfn}
Let $f\in \Rx_4$. Then a \todfn{\qdr} (QDR) of $f$ is a matrix 
$Q=\begin{pmatrix}
q_0 & q_1\\
q_1 & q_2
\end{pmatrix}$
with $q_0,q_1,q_2\in A_2$ such that $f=\det(Q)$.
\end{dfn}

\begin{prop}[{\cite[section 254]{salmon1879},\cite[Proposition 5.7.]{psv2011}}] 
\label{prop:contact_conics}
Let $f\in \Rx_4$ be a smooth quartic and let $Q$ be a QDR of $f$. Then the variety $\{\lambda Q \lambda^T\colon \lambda\in\P^1\}\subset \P A_2$ contains exactly six products of two bitangents of $f$. I.e. there exist twelve bitangents $l_1^{},l_1',\dots,l_6^{},l_6'\in A_1$ of $f$ such that $[l_i^{}l_i']\in \lambda Q\lambda^T$.
\end{prop}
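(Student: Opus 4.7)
The plan is to parameterize the reducible conics in the family $\{\lambda Q\lambda^T:\lambda\in\P^1\}$ by the roots of a binary sextic (the discriminant along the family), show that each reducible member is automatically a product of two bitangents via the QDR identity and Noether's theorem, and finally argue via Cayley octad combinatorics that the six products are pairwise distinct.

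First, fix a basis of $A_2$ and for each $\lambda=(\lambda_0:\lambda_1)\in\P^1$ let $M(\lambda)$ denote the symmetric $3\times 3$ matrix representing the conic $\lambda Q\lambda^T=\lambda_0^2 q_0+2\lambda_0\lambda_1 q_1+\lambda_1^2 q_2\in A_2$. The entries of $M(\lambda)$ are homogeneous of degree $2$ in $(\lambda_0,\lambda_1)$, so $D(\lambda):=\det M(\lambda)$ is a binary form of degree $6$. Since a conic in $A_2$ factors as a product of two linear forms exactly when its associated $3\times 3$ symmetric matrix is singular, the reducible members of the family correspond bijectively to the roots of $D$ in $\P^1$. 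In particular, the family contains at most six products of two linear forms.

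Second, I would show that every such reducible member $q_*=\lambda_*Q\lambda_*^T=ll'$ is necessarily a product of two bitangents of $f$. Starting from the QDR identity $f=q_0q_2-q_1^2$ and multiplying by $q_2$, one computes
\[
q_2\cdot ll'\;=\;\lambda_{*0}^2 q_0q_2+2\lambda_{*0}\lambda_{*1} q_1q_2+\lambda_{*1}^2 q_2^2\;=\;\lambda_{*0}^2 f+(\lambda_{*0} q_1+\lambda_{*1}q_2)^2.
\]
Assuming $\lambda_{*0}\neq 0$ (otherwise interchange $q_0$ and $q_2$) and working over $\C$ where $-1$ has a square root, this rearranges to an identity of the form $f=r^2+hl$ with $r\in A_2,\,h\in A_3$. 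By the remark following \cref{thm:afbg}, $l$ is a bitangent of $f$; the symmetric argument with $q_0$ in place of $q_2$ shows that $l'$ is a bitangent as well.

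The main obstacle is to verify that the six roots of $D$ are pairwise distinct and that none of them yields a double line $l^2$ (in which case $l$ would occur twice among the twelve bitangents). This transversality is not visible from the algebraic identity alone: a double root of $D$ would correspond geometrically to the image conic $\lambda\mapsto[\lambda Q\lambda^T]$ in $\P A_2$ being tangent to the discriminant cubic hypersurface at the corresponding reducible conic. I would establish both non-degeneracies via the classical Cayley octad approach of \cite[\S 6.3]{dolgachev2012}, using that a QDR of a smooth quartic $f$ corresponds canonically to a $4+4$ partition of the Cayley octad, and that the six pairs of bitangents are indexed by prescribed pairings across this partition. Smoothness of $f$ then prevents any two such indexings from collapsing, so all roots of $D$ are simple and the resulting twelve linear forms are pairwise non-proportional.
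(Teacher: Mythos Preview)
The paper does not give its own proof of this proposition; it is quoted from the classical literature (Salmon \cite{salmon1879} and \cite[Proposition~5.7]{psv2011}) without argument. So there is nothing in the paper to compare against, and I can only assess your sketch on its own merits.

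Your first two steps are sound and are essentially the classical argument. The discriminant $D(\lambda)=\det M(\lambda)$ is indeed a binary sextic, and the identity
\[
q_2\cdot(\lambda Q\lambda^T)=\lambda_0^2\,f+(\lambda_0 q_1+\lambda_1 q_2)^2
\]
is exactly how one sees that each linear factor of a reducible member is a bitangent. Two small points: (a) the argument for $l'$ does not require swapping $q_0$ and $q_2$; it follows from the same identity since $l$ and $l'$ play symmetric roles in the product $ll'$; (b) you should also note why $D\not\equiv 0$, i.e.\ why the image conic in $\P A_2$ is not entirely contained in the discriminant cubic---this already uses smoothness of $f$ (for instance via base-point-freeness of $\spn(q_0,q_1,q_2)$).

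Your third step is correctly flagged as the substantive part. Invoking the Cayley octad via \cite{dolgachev2012} is legitimate, but be careful about logical order: in this paper the combinatorial description of Steiner complexes in \cref{thm:equivalences_sc} comes \emph{after} \cref{prop:contact_conics} and in fact presupposes it, so within the paper's narrative your appeal would be circular. If you cite Dolgachev's independent development directly that is fine, but the cleaner classical route (and the one in the cited references) is more direct: one argues that a repeated root of $D$, or a reducible member of the form $l^2$, forces a higher-order contact between a conic and $\vc(f)$ that contradicts smoothness. As written, your last paragraph is a plan rather than a proof.
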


\begin{lab}
\label{lab:qdr_to_sos}
Let $f\in \Rx_4$ be a smooth quartic, and let $Q=\begin{pmatrix}
q_0 & q_1\\
q_1 & q_2
\end{pmatrix}$
be a QDR of $f$.

This means $f=q_0q_2-q_1^2$ which gives rise to a sos representation of $f$ over $\C$ as follows
\[
f=\left(\frac{1}{2}q_0+\frac{1}{2}q_2\right)^2+\left(\frac{1}{2i}q_0-\frac{1}{2i}q_2\right)^2+(iq_1)^2.
\]
We call the Gram tensor $\theta$ corresponding to this sos representation of $f$ the Gram tensor corresponding to $Q$.
Then $\im_\C\theta=\spn(q_0,q_1,q_2)$. 
$\theta$ is well-defined: let $f=p_0p_2+p_1^2$ ($p_0,p_1,p_2\in A_2$) be any other representation of $f$ such that $\spn(p_0,p_1,p_2)=\spn(q_0,q_1,q_2)$. Then the Gram tensor corresponding to this equation has the same image as $\theta$. As the map $\sy{\im_\C(\theta)}\stackrel{\mu}{\to} A_4$ is injective, we see that there is only one Gram tensor with this image, hence the two are equal.
\end{lab}

Corresponding to every \qdr\ of a form $f$, we cannot only associate a Gram tensor of $f$ as above, but also a Steiner complex. These are also in one-to-one correspondence with \qdr s of $f$ but are of a more combinatorial nature.

\begin{thm}
\label{thm:equivalences_sc}
Let $f\in \Rx_4$ be a smooth quartic, and let $l_1^{},l_1',\dots,l_6^{},l_6'\in A_1$ be twelve different bitangents of $f$. Write $\mS=\{\{l_i^{},l_i'\}\colon i=1,\dots,6\}$, then the following are equivalent:
\begin{enumerate}
\item The six quadrics $[l_1^{}l_1'],\dots,[l_6^{}l_6']$ are on the hypersurface $\lambda Q\lambda^T,\, \lambda\in\P^1$, for a \qdr\ $Q$ of $f$.
\item For ever $i\neq j$, there exist $q\in A_2$ and $\lambda\in\C$ such that $f=\lambda l_i^{}l_i'l_j^{}l_j'+q^2$, i.e. the intersection points of $\vc(l_i^{}l_i'l_j^{}l_j')$ and $\vc(f)$ lie on a conic $\vc(q)$.
\item Let $M$ be a \ldr\ of $f$ and \\$O_1,\dots,O_8$ the corresponding Cayley octad. Then
\begin{align*}
\mS &=\{ \{b_{ik},b_{jk}\}\colon \{i,j\}=I, k\in I^c  \} \text{ for some } I\subset \{1,\dots,8\},\, |I|=2 \text{ or}\\
\mS &=\{ \{b_{ij},b_{kl}\}\colon \{i,j,k,l\}=I \text{ or } \{i,j,k,l\}=I^c  \} \text{ for some } I\subset \{1,\dots,8\},\, |I|=4.
\end{align*}
\end{enumerate}
($I^c=\{1,\dots,8\}\setminus I$ denotes the complement of $I$ in $\{1,\dots,8\}$)
Any sextuple $\mS$ satisfying the equivalent conditions (i)-(iii), is called a \todfn{Steiner complex} of $f$.
\end{thm}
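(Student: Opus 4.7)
The plan is to establish (i) $\Leftrightarrow$ (ii) by linear algebra on pencils of conics, and (i) $\Leftrightarrow$ (iii) by an explicit Cayley-octad construction combined with a bijective count. For (i) $\Rightarrow$ (ii), I would choose $\lambda_i, \lambda_j \in \P^1$ with $\lambda_s Q \lambda_s^T$ proportional to $l_s l_s'$ for $s = i, j$, and pick $T \in \gl_2(\C)$ sending $\lambda_i, \lambda_j$ to the two standard basis vectors of $\C^2$. The conjugate $T^{-1} Q T^{-T}$ is then a QDR of the scalar multiple $(\det T)^{-2} f$ whose diagonal entries are proportional to $l_i l_i'$ and $l_j l_j'$; expanding its $2 \times 2$ determinant and rescaling gives the identity in (ii). For the converse (ii) $\Rightarrow$ (i), each pairwise identity produces a local QDR $Q^{ij} = \bigl( \begin{smallmatrix} \lambda_{ij} C_i & i q_{ij} \\ i q_{ij} & C_j \end{smallmatrix} \bigr)$ of $f$ with $C_s := l_s l_s'$. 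The idea is then to subtract two pairwise identities, say for $(i,j) = (1,2)$ and $(1,k)$, obtaining $C_1 (\lambda_{12} C_2 - \lambda_{1k} C_k) = (q_{1k} - q_{12})(q_{1k} + q_{12})$; unique factorization in $\C[x,y,z]$ should force (in the generic branch) $q_{1k} - q_{12}$ to be a scalar multiple of $C_1$, so all $q_{ij}$ land in $V := \spn(C_1, C_2, q_{12})$. A similar subtraction using pairs $(1, k)$ and $(2, k)$ then places every $C_k$ in $V$, so $Q^{12}$ is a QDR of $f$ with pencil span $V$ and pencil containing all six $C_k$.

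For (i) $\Leftrightarrow$ (iii), I would fix a LDR $M = Ax + By + Cz$ of $f$ with Cayley octad $O_1, \ldots, O_8$ and construct the QDR directly from the combinatorial datum. In the case $I = \{1, 2\}$, I choose a basis of $\C^4$ whose first two vectors are $O_1, O_2$: in this basis $M$ has block form $\bigl( \begin{smallmatrix} 0 & B \\ B^T & D \end{smallmatrix} \bigr)$ with a vanishing upper-left $2 \times 2$ block (by $O_s M O_s^T = 0$), and the formula $b_{ij} = O_i M O_j^T$ expresses the entries of $B$ in terms of the $b_{1k}, b_{2k}$. Expanding $\det M$ via this decomposition writes $f$ as the determinant of a symmetric $2 \times 2$ matrix of quadratic forms, and a direct check shows each product $b_{1k} b_{2k}$ with $k \in I^c$ lies on the resulting pencil. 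The case $|I| = 4$ is handled analogously via a $4$-dim\,/\,$4$-dim splitting. The $\binom{8}{2} + \frac{1}{2} \binom{8}{4} = 63$ combinatorial choices match the classical count of $63$ inequivalent QDRs of a smooth ternary quartic \cite[Ch.~6]{dolgachev2012}, which would make the correspondence bijective and give both implications.

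The hardest step will be (ii) $\Rightarrow$ (i): the factorization argument has to be carried out carefully, and in particular one must verify the genericity assumptions that the pencil elements $\lambda C_i - \mu C_j$ are irreducible and that the case analysis for unique factorization stays in the desired branch, using that the twelve bitangents $l_s, l_s'$ are pairwise distinct on the smooth quartic $f$. The explicit block-decomposition construction in (iii) $\Rightarrow$ (i) is mostly bookkeeping once the correct Cayley basis is chosen.
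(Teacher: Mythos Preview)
The paper does not give its own proof of this theorem: it is stated in Section~3 as classical background on plane quartics, and the surrounding text implicitly points to Salmon, Dolgachev~\cite{dolgachev2012}, and \cite{psv2011} for the underlying theory of contact conics, theta characteristics, and Cayley octads. So there is no proof in the paper to compare against; I can only comment on whether your outline would stand on its own.

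Your arguments for (i)~$\Rightarrow$~(ii) and for (iii)~$\Rightarrow$~(i) via the block decomposition of $M$ in a Cayley-octad basis are standard and should go through. The bijectivity count $\binom{8}{2}+\tfrac{1}{2}\binom{8}{4}=63$ is also fine, provided you check that distinct combinatorial data $I$ produce distinct twelve-tuples of bitangents (they do) and hence, via Proposition~\ref{prop:contact_conics}, inequivalent QDRs.

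The genuine gap is in (ii)~$\Rightarrow$~(i). Your subtraction yields
\[
C_1\bigl(\lambda_{12}C_2-\lambda_{1k}C_k\bigr)=(q_{1k}-q_{12})(q_{1k}+q_{12}),
\]
but $C_1=l_1l_1'$ is \emph{reducible}, so unique factorization does not force $C_1\mid(q_{1k}-q_{12})$: it is perfectly consistent that $l_1\mid(q_{1k}-q_{12})$ while $l_1'\mid(q_{1k}+q_{12})$. In that branch you only get $q_{1k}-q_{12}=l_1m$ and $q_{1k}+q_{12}=l_1'm'$ for linear forms $m,m'$, and neither $q_{1k}\in V$ nor $C_k\in V$ follows. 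Calling this ``non-generic'' does not help, since the hypotheses of (ii) give you no control over which branch occurs, and the twelve bitangents being distinct does not rule it out. You would need an additional argument here --- for instance, showing that the bad branch forces a forbidden incidence among the bitangents, or bypassing the factorization entirely. The clean classical route is to observe that the identity $f=\lambda C_iC_j+q^2$ says the divisor cut by $C_i$ plus the divisor cut by $C_j$ is twice a canonical divisor, so all six pairs $\{l_i,l_i'\}$ determine the same $2$-torsion class on $\mathrm{Jac}(C)$; this is how Dolgachev organises the equivalence, and it avoids the factorization issue altogether.
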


From this, we can also calculate the number of Steiner complexes. There are $\binom{8}{2}=28$ Steiner complexes of the first type and $\frac{1}{2}\cdot\binom{8}{4}=35$ of the second type. 
In the second case we get the same Steiner complex if we choose $I$ or $I^c$. 

We say that two \qdr\ are equivalent if the images of the corresponding Gram tensors are the same.
Up to this equivalence there are exactly 63 \qdr\ of a smooth quartic $f$ and these are in one-to-one correspondence with the 63 Steiner complex.

After fixing a \ldr\ and a Cayley octad of $f$, the rank 3 Gram tensors of $f$ can be enumerated in the same way the Steiner complexes can, i.e. by subsets of $\{1,\dots,8\}$ as above.

For ease of notation, we write $I=ijkl$ instead of $I=\{i,j,k,l\}\subset\{1,\dots,8\}$, $|I|=4$, and $I=ij$ instead of $I=\{i,j\}\subset\{1,\dots,8\}$, $|I|=2$.

There was already some computational evidence found by Powers and Reznick \cite{pr2000} that a smooth psd ternary quartic has exactly 15 real Gram tensors of rank 3. Later, in \cite{prss2004} this was proven using the connection of \qdr\ with 2-torsion points of the corresponding curve.
In \cite{psv2011} the authors found a way to identify these representations using the combinatorial properties of Steiner complexes.

Let $f\in \Sigma$ be a smooth quartic, and let $M$ be a \textit{real} \ldr\ of $f$. Since $f$ is smooth, the curve defined by $f$ contains no real points, i.e. $\vc(f)(\R)=\emptyset$. Using table 1 in \cite{psv2011}, this means that there is no real point in the Cayley octad, hence it consists of four conjugate pairs.
After reordering the Cayley octad, we may assume that $\ol O_i=O_{i+1}$ for $i=\{1,3,5,7\}$ where $\ol{\phantom{\rule{0.1mm}{0.1mm}}\cdot\phantom{\rule{0.1mm}{0.1mm}}}$ denotes complex conjugation. Then the real Gram tensors of rank 3 of $f$ are given as follows.

\begin{thm}[{\cite[Theorem 6.2. \& 6.3.]{psv2011}}]
\label{thm:psd_gram_tensors}
The eight real psd Gram tensors of rank 3 of $f$ correspond to the following eight Steiner complexes:
\begin{align*}
&1357,\quad 1368,\quad 1458,\quad 1467,\\
&1358,\quad 1367,\quad 1457,\quad 1468.
\end{align*}
The other seven real Gram tensors of rank 3 correspond to the Steiner complexes given by
\begin{gather*}
1234,\quad 1256,\quad 1278,\\
12,\quad 34,\quad 56,\quad 78.
\end{gather*}
\end{thm}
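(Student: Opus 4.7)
The plan is to determine reality and positivity of each of the $63$ rank-$3$ Gram tensors of $f$ by combining the combinatorics of Steiner complexes with the action of complex conjugation on the real Cayley octad. By hypothesis, conjugation acts on indices as the involution $\sigma\colon\{1,\dots,8\}\to\{1,\dots,8\}$ swapping $2k{-}1\leftrightarrow 2k$, and hence on bitangents by $\overline{b_{ij}}\in\C^*\cdot b_{\sigma(i)\sigma(j)}$. The image in $A_2$ of the rank-$3$ Gram tensor associated to a Steiner complex $\mS$ is spanned by the product conics $ll'$ for $\{l,l'\}\in\mS$ (see \cref{lab:qdr_to_sos}), so the tensor is real iff $\mS$ is $\sigma$-invariant.

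Enumerating the $\sigma$-invariant Steiner complexes via the description in \cref{thm:equivalences_sc}(iii) yields three families: type I with $\sigma(I)=I$, which forces $I$ to be one of the conjugate pairs $\{12,34,56,78\}$ (4 complexes); type II with $\sigma(I)=I$, forcing $I$ to be a union of two conjugate pairs and giving, after identifying $I\sim I^c$, the three complexes $I\in\{1234,1256,1278\}$; and type II with $\sigma(I)=I^c$, i.e.\ $I$ contains exactly one index from each conjugate pair, giving, after identifying $I\sim I^c$, the eight complexes $I\in\{1357,1358,1367,1368,1457,1458,1467,1468\}$. The total $4+3+8=15$ matches the known count of real rank-$3$ Gram tensors from \cite{prss2004}, so no real Gram tensor is missed.

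For positivity I single out the $8$ type-II complexes with $\sigma(I)=I^c$. For such a complex, fix a pair $\{b_{ij},b_{kl}\}\in\mS$ with $\{i,j,k,l\}=I$ and set $q_0:=b_{ij}b_{kl}$. Since $\sigma(I)=I^c$, the conjugate pair $\{b_{\sigma(i)\sigma(j)},b_{\sigma(k)\sigma(l)}\}$ lies in $\mS$ as well, indexed from $I^c$, so $q_2:=\overline{q_0}$ lies in the image $U$. Writing the associated QDR as $f=q_0q_2-q_1^2$, one argues that the skew entry $q_1\in U$ is forced to be purely imaginary, $q_1=ir$ with $r\in\Rx_2$. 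Then
\[
f=q_0\overline{q_0}+r^2=(\operatorname{Re} q_0)^2+(\operatorname{Im} q_0)^2+r^2
\]
is an sos of length $3$ over $\R$, so the corresponding Gram tensor is psd. For each of the remaining $7$ real Steiner complexes (the $4$ of type I, and the $3$ of type II with $\sigma(I)=I$) the analogous construction produces a QDR with $q_0,q_1,q_2\in\Rx_2$; in the real basis $(q_0,q_2,q_1)$ of $U$ the Gram matrix equals
\[
\begin{pmatrix} 0 & \tfrac12 & 0 \\ \tfrac12 & 0 & 0 \\ 0 & 0 & -1 \end{pmatrix},
\]
of signature $(1,2)$, so each of these $7$ tensors is indefinite.

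The main obstacle is the real-vs-purely-imaginary dichotomy for $q_1$: justifying that $\sigma(I)=I^c$ forces $q_1=ir$ with $r$ real, while in the other $7$ cases $q_1$ can be taken real. Both halves reduce to identifying the real structure on the $\C$-three-dimensional image $U=U_\R\otimes_\R\C$ and tracking how the six product conics $ll'$ with $\{l,l'\}\in\mS$ are paired by $\sigma$. The combinatorial dichotomy $\sigma(I)=I$ vs.\ $\sigma(I)=I^c$ dictates whether the direction of $U$ not spanned by the chosen pair $\{q_0,q_2\}$ is fixed or anti-fixed by conjugation, and this in turn forces the reality type of $q_1$. Once this dichotomy is made rigorous, the sos identity above and the explicit signature computation complete the proof.
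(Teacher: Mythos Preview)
The paper does not give its own proof of this theorem; it is quoted from \cite{psv2011}. So there is no ``paper's proof'' to compare against, and your proposal has to be judged on its own merits.

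Your enumeration of the $15$ real rank-$3$ Gram tensors is correct and cleanly argued: reality of $\theta$ is equivalent to $\sigma$-invariance of $\mS$, and the count $4+3+8$ is right. The signature argument for the three type-II complexes with $\sigma(I)=I$ also goes through, though not quite as you state it: here every pair $\{b_{ij},b_{kl}\}\in\mS$ is fixed by $\sigma$, so all six product conics $b_{ij}b_{kl}$ are real points on the isotropic conic of $\theta^{-1}$ in $\P U$. A smooth real conic with a real point has indefinite associated quadratic form, so these three tensors are indefinite.

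The genuine gap is exactly the one you flag, but it is worse than you suggest. You claim that the combinatorial dichotomy ``$\sigma(I)=I$ versus $\sigma(I)=I^c$'' determines whether $q_1$ is real or purely imaginary. This is false as stated. For a type-I complex such as $I=12$, conjugation sends the pair $\{b_{1k},b_{2k}\}$ to $\{b_{1\sigma(k)},b_{2\sigma(k)}\}$, so the six product conics come in three conjugate pairs with none real~--- exactly the same pattern as for the eight type-II complexes with $\sigma(I)=I^c$. Thus the action of $\sigma$ on the pairs of $\mS$ (a fixed-point-free involution in both cases) cannot distinguish the four indefinite type-I tensors from the eight psd ones. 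In both situations you can set $q_0$ equal to a product conic and $q_2=\overline{q_0}$, and in both you only learn that $q_1$ is real or purely imaginary; deciding which requires input beyond the permutation action of $\sigma$ on $\mS$.

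In \cite{psv2011} this is resolved not by combinatorics but by writing down, from the real LDR $M$ and the Cayley octad, explicit $3\times 3$ symmetric matrices of quadratic forms whose determinant is $f$ and whose associated Gram matrix one can inspect directly; alternatively one can invoke the monodromy/connectedness argument underlying \cite{prss2004} to reduce to a single example. Your outline would need one of these ingredients to close the gap for the twelve complexes where the product conics are all non-real.
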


\section{The Steiner graph}
\label{sec:Steiner graph}

\begin{dfn}
Let $f\in\Sigma$ be smooth. Consider the graph whose vertices correspond to the eight real psd rank 3 Gram tensors of $f$ (or equivalently their Steiner complexes). We draw an edge between two vertices if the line segment connecting the two corresponding Gram tensors on $\gram(f)$ is contained in the boundary of $\gram(f)$.
This graph is called the \todfn{Steiner graph} of $f$.
\end{dfn}

We show that the Steiner graph of a smooth psd quartic is the union of two $K_4$, where $K_4$ denotes the complete graph on four vertices.
This graph has already been considered in \cite{psv2011} where it is shown that it contains the disjoint union of two $K_4$.
The fact that there are no additional edges in the Steiner graphs seems to be claimed as well in the paper, but without proof.
In this section, we show that this is indeed the case.
For this, we need some more facts about possible arrangements of bitangents which we show first.

A similar graph was constructed and studied for binary forms in \cite{scheiderer2018}.

\begin{rem}
\label{rem:steiner_graph}
We show that for a smooth quartic $f\in\Sigma$ the Steiner graph of $f$ has the form in \cref{fig:steiner_graph}. It is the disjoint union of two complete graphs on four vertices and the vertices in them are given by the four Steiner complexes in a row in \cref{thm:psd_gram_tensors}.

The proof that the graph below is contained in the Steiner graph is given in \cref{lem:Steiner_graph_1} which is taken from \cite[Lemma 6.4.]{psv2011}.

\begin{figure}[h]
\centering
\begin{tikzpicture}[main/.style = {draw, ellipse}] 
	\node[main] (1) at (0,0) {$1357$}; 
	\node[main] (2) at (1.2,-1.2) {$1368$};
	\node[main] (3) at (1.2,1.2) {$1458$}; 
	\node[main] (4) at (2.4,0) {$1467$};
	\draw (1) -- (2);
	\draw (1) -- (3);
	\draw (1) -- (4);
	\draw (2) -- (3);
	\draw (2) -- (4);
	\draw (3) -- (4);
	
	\node[main] (5) at (4.8,0) {$1358$}; 
	\node[main] (6) at (6,-1.2) {$1367$};
	\node[main] (7) at (6,1.2) {$1457$}; 
	\node[main] (8) at (7.2,0) {$1468$};
	\draw (5) -- (6);
	\draw (5) -- (7);
	\draw (5) -- (8);
	\draw (6) -- (7);
	\draw (6) -- (8);
	\draw (7) -- (8);
\end{tikzpicture}
\caption{The Steiner graph of a smooth quartic.}
\label{fig:steiner_graph}
\end{figure}
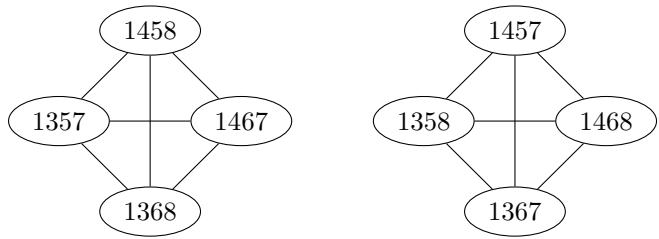
\end{rem}

We fix a smooth quartic $f\in\Sigma$, a \ldr\ of $f$, and a Cayley octad.

\begin{prop}
\label{prop:intersection_sc}
Let $\mS_1,\,\mS_2$ be two different Steiner complexes. Then one of the following holds:
\begin{enumerate}
\item There exist four different bitangents $l_1,\dots,l_4$ such that $\{l_1,l_2\},\{l_3,l_4\}\in\mS_1$ and $\{l_1,l_3\},$ $\{l_2,l_4\}\in\mS_2$, or
\item there exist six bitangents $l_1,\dots,l_6$ such that $\{l_i^{},l_i'\}\in\mS_1$ and $\{l_i^{},l_i''\}\in\mS_2$ for every $i=1,\dots,6$ where $l_i^{},l_i',l_i''$, $i=1,\dots,6$ are 18 different bitangents.
\end{enumerate}
\end{prop}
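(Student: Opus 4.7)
The plan is to argue directly from the combinatorial description of Steiner complexes in \cref{thm:equivalences_sc}(iii). Write $\mS_1=\mS_{I_1}$ and $\mS_2=\mS_{I_2}$ for index sets $I_1,I_2\subset\{1,\dots,8\}$ of size $2$ or $4$, and recall that for a four-element set $\mS_I=\mS_{I^c}$; in particular, the $(4,4)$ case with $I_2=I_1^c$ would give $\mS_1=\mS_2$ and is excluded by hypothesis. The proof then proceeds by a case distinction on the size pair $(|I_1|,|I_2|)\in\{(2,2),(2,4),(4,4)\}$ and, within each case, on $|I_1\cap I_2|$.

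The key observation for the $(4,4)$ case is that $b_{ij}\in\mS_{I_1}\cap\mS_{I_2}$ precisely when $\{i,j\}$ is contained in a single block of the refined partition $\{I_1\cap I_2,\,I_1\cap I_2^c,\,I_1^c\cap I_2,\,I_1^c\cap I_2^c\}$. The sizes of these blocks are determined by $|I_1\cap I_2|$: when $|I_1\cap I_2|=2$ every block has size $2$, so there are $4$ common bitangents that pair crosswise as in conclusion (i); when $|I_1\cap I_2|\in\{1,3\}$ two blocks have size $3$ and the other two have size $\le 1$, yielding $2\binom{3}{2}=6$ common bitangents as in (ii). The $(2,2)$ case behaves analogously, with disjoint index pairs giving (i) and a single shared index giving (ii); in the mixed case $\mS_{\{a,b\}}\cap\mS_I$, one obtains (i) when $\{a,b\}\subset I$ or $\{a,b\}\subset I^c$ and (ii) when $|\{a,b\}\cap I|=1$.

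Once the common bitangents are enumerated, their pair-partners in each Steiner complex can be read off directly from \cref{thm:equivalences_sc}(iii), which gives the crosswise pairing in case (i) and distinct partners in $\mS_1$ versus $\mS_2$ in case (ii). The only real difficulty is the combinatorial bookkeeping in case (ii), where one must verify that the $18$ bitangents $l_i^{},l_i',l_i''$ are pairwise distinct. Since $b_{ij}=b_{i'j'}$ iff $\{i,j\}=\{i',j'\}$, this reduces to checking that in each subcase the $18$ index pairs produced are all different, which follows immediately from the block structure of $I_1$ and $I_2$. Organizing the argument as a small table indexed by $(|I_1|,|I_2|,|I_1\cap I_2|)$ turns the apparent case explosion into a short finite check.
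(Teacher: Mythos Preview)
Your argument is correct. The case analysis on $(|I_1|,|I_2|,|I_1\cap I_2|)$ is exhaustive, and in each subcase the enumeration of common bitangents and their partners in $\mS_1$ and $\mS_2$ comes out exactly as you describe. The one point worth making explicit in a write-up is that in the putative case (ii) none of the six common bitangents are paired \emph{with each other} inside $\mS_1$ (or inside $\mS_2$); this is what guarantees that the six partners $l_1',\dots,l_6'$ are genuinely new bitangents and not a relabelling of the $l_j$. You have implicitly used this, and it follows in each subcase from the same block considerations you invoke, but it should be stated.

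The paper does not give its own argument here: it simply cites Dolgachev \cite[Lemma~5.4.8, Proposition~6.1.6]{dolgachev2012}, where the dichotomy is established via the symplectic form on the $2$-torsion of the Jacobian and the identification of Steiner complexes with nonzero elements of $\mathrm{Jac}(C)[2]$. Your approach is genuinely different and more elementary: it stays entirely inside the Cayley-octad labelling already set up in \cref{thm:equivalences_sc}(iii) and reduces the proposition to a finite combinatorial check on subsets of $\{1,\dots,8\}$. The advantage is self-containment---a reader need not open Dolgachev---and the argument also makes the later \cref{lem:steiner_complexes_to_subsets} an immediate by-product. The cost is that one loses the conceptual explanation (syzygetic versus azygetic is orthogonality for the Weil pairing), which is what makes the dichotomy inevitable rather than a coincidence of the indexing.
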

\begin{proof}
This follows from \cite[Lemma 5.4.8., Proposition 6.1.6.]{dolgachev2012}.
\end{proof}

\begin{dfn}
In the situation of \cref{prop:intersection_sc}, the two Steiner complexes are called \todfn{syzygetic} if they satisfy (i) and \todfn{azygetic} if they satisfy (ii).
\end{dfn}

With a fixed \ldr\ and a Cayley octad, being syzygetic or azygetic for a pair of Steiner complexes translates into the following statements about subsets of $\{1,\dots,8\}$ representing the Steiner complexes.

\begin{lem}
\label{lem:steiner_complexes_to_subsets}
Let $I,J\subset \{1,\dots,8\}$ be subsets such that $\vert I\vert,\vert J\vert=4$ and $I\neq J,J^c$. Let $\mS_1,\mS_2$ be the corresponding Steiner complexes. Then $\mS_1$ and $\mS_2$ are syzygetic if and only if $\vert I\cap J\vert=2$.
\end{lem}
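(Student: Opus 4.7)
The plan is to reduce the claim to a simple combinatorial count of $2$-subsets of $\{1,\dots,8\}$ via \cref{thm:equivalences_sc}(iii).

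Setting $k:=|I\cap J|$, the hypotheses $I\neq J,\, J^c$ force $k\in\{1,2,3\}$. I would partition $\{1,\dots,8\}$ into the four disjoint blocks
\[
A=I\cap J,\quad B=I\setminus J,\quad C=J\setminus I,\quad D=I^c\cap J^c,
\]
of sizes $k,\,4-k,\,4-k,\,k$ respectively. By \cref{thm:equivalences_sc}(iii), a bitangent $b_{pq}$ appears in some pair of $\mS_I$ iff $\{p,q\}\subset A\cup B$ or $\{p,q\}\subset C\cup D$, and it appears in $\mS_J$ iff $\{p,q\}\subset A\cup C$ or $\{p,q\}\subset B\cup D$. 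Intersecting these conditions block-by-block, $b_{pq}$ is common to $\mS_I$ and $\mS_J$ iff $\{p,q\}$ lies inside a single block among $A,B,C,D$. A binomial count then gives $2\binom{k}{2}+2\binom{4-k}{2}$ common bitangents, which equals $4$ when $k=2$ and $6$ when $k\in\{1,3\}$.

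Next I would invoke \cref{prop:intersection_sc}: one of cases (i), (ii) must hold, and these involve exactly $4$ resp.\ $6$ bitangents common to the two complexes. Thus the count above identifies the case directly, and syzygeticity corresponds exactly to $k=2$. To nail down the syzygetic direction concretely, for $k=2$ I would write $A=\{a,b\},\,B=\{c,d\},\,C=\{e,f\},\,D=\{g,h\}$; then the four common bitangents $b_{ab},b_{cd},b_{ef},b_{gh}$ appear in $\mS_I$ as the pairs $\{b_{ab},b_{cd}\}\subset I$ and $\{b_{ef},b_{gh}\}\subset I^c$, and in $\mS_J$ as the pairs $\{b_{ab},b_{ef}\}\subset J$ and $\{b_{cd},b_{gh}\}\subset J^c$, matching configuration (i) verbatim. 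For $k\in\{1,3\}$ all six common bitangents come from the two blocks of size $3$, and their partners in $\mS_I$ and $\mS_J$ each involve an index from a block of size $1$ on the $I$- resp.\ $J$-side; inspection of the resulting partner lists shows that the twelve partners are pairwise distinct and disjoint from the common bitangents, which is configuration (ii).

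The only obstacle to watch for is confirming that cases (i) and (ii) of \cref{prop:intersection_sc} are genuinely exclusive so that the common-bitangent count alone pins the case down. This is automatic: (i) forces exactly $4$ common bitangents and (ii) forces exactly $6$, so the counts rule each other out for a given pair $(I,J)$.
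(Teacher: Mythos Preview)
Your argument is correct. The paper itself does not give a proof of this lemma; it simply cites \cite[Lemma~6.5]{psv2011}, so there is no in-paper argument to compare against. Your block decomposition $A,B,C,D$ and the resulting count $2\binom{k}{2}+2\binom{4-k}{2}$ of common bitangents, together with the explicit identification of configurations (i) and (ii), is exactly the natural combinatorial proof.

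One small remark on your exclusivity step: the claim that ``case (i) forces exactly $4$ common bitangents'' is not literally contained in the statement of \cref{prop:intersection_sc}, which only asserts the \emph{existence} of four bitangents in that configuration. The paper does read the proposition this way elsewhere (e.g.\ in the proof of \cref{prop:six_one_dim_faces}), so this is fine in context. But your own computation already gives a self-contained substitute: for $k\in\{1,3\}$ every common bitangent $b_{pq}$ has $\{p,q\}$ inside one of the two size-$3$ blocks, and its partner in $\mS_I$ picks up the index from the singleton block $A$ (resp.\ $D$), so that partner is never common. Hence no two common bitangents lie in the same pair of $\mS_I$, and configuration (i) is impossible directly, without appealing to a count.
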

\begin{proof}
See \cite[Lemma 6.5]{psv2011}.
\end{proof}

\begin{example}
We consider the Steiner complexes $1358$ and $1457$ in the Steiner graph. These form a syzygetic pair. Indeed, by \cref{lem:steiner_complexes_to_subsets} we only have to check that the sets $1358$ and $1457$ contain exactly two common elements. This is true and the elements are $1$ and $5$.

Moreover, we see from the proof of \cref{lem:steiner_complexes_to_subsets} that the four bitangents in \cref{prop:intersection_sc} (i) are given by $b_{15},b_{38},b_{47},b_{26}$. The pairs $\{b_{15},b_{38}\},\,\{b_{47},b_{26}\}$ are contained in $1358$ and $\{b_{15},b_{47}\},\,\{b_{38},b_{26}\}$ are contained in $1457$.
It is now natural to consider the Steiner complex containing $\{b_{15},b_{26}\},\,\{b_{47},b_{38}\}$ which is the third possibility to form two pairs from the four bitangents. This Steiner complex is therefore $1256$.
We note that it does correspond to a real rank 3 tensors which is not psd by \cref{thm:psd_gram_tensors}.
\end{example}

We can now give the proof that the Steiner graph of $f$ contains all the edges shown in \cref{rem:steiner_graph}.

\begin{lem}[{\cite[Lemma 6.4.]{psv2011}}]
\label{lem:Steiner_graph_1}
Let $\mS_1,\mS_2$ be two Steiner complexes corresponding to real psd Gram tensors, that are in the same line in \cref{thm:psd_gram_tensors}. Then the line between the corresponding Gram tensors on $\gram(f)$ is contained in the boundary of $\gram(f)$.

I.e. if $\theta_1,\theta_2$ are the corresponding Gram tensors, then for every $\lambda\in [0,1]$ the point $\lambda\theta_1+(1-\lambda)\theta_2$ is on the boundary of $\gram(f)$.
\end{lem}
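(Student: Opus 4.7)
The strategy is to reduce the boundary claim to a purely algebraic condition via \cref{thm:dim_faces}, and then to verify that condition using the combinatorics of Steiner complexes together with unique factorization in $\C[x,y,z]$. Concretely, for $\lambda\in(0,1)$ the tensor $\theta_\lambda:=\lambda\theta_1+(1-\lambda)\theta_2$ is psd with image $U:=U_1+U_2$, where $U_i:=\im\theta_i$ is $3$-dimensional. By \cref{thm:dim_faces}, $\theta_\lambda$ lies in the relative interior of a face of $\gram(f)$ controlled by $U$, and this face is proper (equivalently $\theta_\lambda\in\partial\gram(f)$) as soon as $U\subsetneq\Rx_2$, i.e.\ as soon as $U_1\cap U_2\ne 0$. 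For $\lambda\in\{0,1\}$ the points $\theta_1,\theta_2$ are rank-$3$ extreme points, already on the boundary. So the whole task is to produce a nonzero element of $U_1\cap U_2$.

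First I observe that any two $4$-element index sets $I,J$ in the same row of \cref{thm:psd_gram_tensors} satisfy $|I\cap J|=2$: every set in a row contains $1$ together with one element from each of the pairs $\{3,4\},\{5,6\},\{7,8\}$, and a pairwise check over the six pairs in one row confirms this. By \cref{lem:steiner_complexes_to_subsets} the two Steiner complexes $\mS_1,\mS_2$ are then syzygetic, and \cref{prop:intersection_sc}(i) produces four distinct bitangents $l_1,l_2,l_3,l_4$ of $f$ with $\{l_1,l_2\},\{l_3,l_4\}\in\mS_1$ and $\{l_1,l_3\},\{l_2,l_4\}\in\mS_2$.

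Next I translate these pair relations into two QDR equations. By \cref{thm:equivalences_sc}(ii) applied to $\mS_1$, after rescaling the representatives $l_i\in A_1$ there exists $q\in A_2$ with $f=l_1l_2l_3l_4-q^2$; then $\begin{pmatrix}l_1l_2 & q\\ q & l_3l_4\end{pmatrix}$ is a QDR of $f$ whose image is $\spn(l_1l_2,l_3l_4,q)$, and by \cref{lab:qdr_to_sos} this image equals $U_1$, so $q\in U_1$. Applying \cref{thm:equivalences_sc}(ii) to $\mS_2$ with the \emph{same} four bitangents (no further rescaling) yields $q'\in U_2$ and some $\mu\in\C^\times$ with $f=\mu\,l_1l_2l_3l_4-(q')^2$. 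Subtracting the two defining equations gives
\[
(q'-\sqrt{\mu}\,q)(q'+\sqrt{\mu}\,q)\;=\;(q')^2-\mu q^2\;=\;(\mu-1)\,f.
\]
Since $f$ is a smooth, hence irreducible, form of degree $4$ and the left-hand side is a product of two forms of degree $2$, unique factorization in $\C[x,y,z]$ forces one of the left-hand factors to vanish; substituting $q'=\pm\sqrt{\mu}\,q$ back into the two defining equations then forces $\mu=1$. Thus $q'=\pm q\in U_1\cap U_2$, and $q\ne 0$ because otherwise $f=l_1l_2l_3l_4$ would be reducible. This gives $\dim(U_1+U_2)\le 5<6=\dim\Rx_2$, and \cref{thm:dim_faces} places $\theta_\lambda$ on a proper face of $\gram(f)$.

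The delicate step is ruling out $\mu\ne 1$. A priori nothing forces the two QDRs to scale the common product $l_1l_2l_3l_4$ identically, and the factorisation $(q'\pm\sqrt{\mu}\,q)$ could in principle realise the third, azygetic pairing $\{l_1,l_4\},\{l_2,l_3\}$; in that scenario the common conic would a priori only land in the image of the \emph{third} Steiner complex, not in $U_1\cap U_2$. Irreducibility of the smooth quartic is the essential input that collapses all these cases to $\mu=1$ and delivers the desired common element.
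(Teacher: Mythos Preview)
Your argument is correct, but you are working harder than necessary. The paper's proof is essentially one line once the four common bitangents $l_1,l_2,l_3,l_4$ are in hand: the \emph{single} equation $f=\lambda\, l_1l_2l_3l_4+q^2$ from \cref{thm:equivalences_sc}(ii) already places $q$ in \emph{both} complex images simultaneously, because the product $l_1l_2l_3l_4=(l_1l_2)(l_3l_4)=(l_1l_3)(l_2l_4)$ can be regrouped either way. Thus $\im_\C\theta_1=\spn(l_1l_2,l_3l_4,q)$ and $\im_\C\theta_2=\spn(l_1l_3,l_2l_4,q)$ visibly share $q$. There is no need to introduce a second conic $q'$, no scalar $\mu$ to eliminate, and no appeal to irreducibility of $f$; what you call the ``delicate step'' simply does not arise, since your $q'$ is $\pm q$ by construction rather than after a computation. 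Your route does work, and the irreducibility trick is a nice way to force $\mu=1$ if one insists on producing the two QDRs independently, but it obscures the point that syzygetic pairs are precisely those sharing a contact conic.

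Two minor remarks on your write-up. First, you pass silently between the real images $U_i=\im\theta_i$ and the complex spans coming from \cref{lab:qdr_to_sos}; the paper is explicit that $\im_\C\theta_i$ has a real basis because $\theta_i$ is real, which is what lets you conclude $\dim_\R(U_1+U_2)\le 5$. Second, the third pairing $\{l_1,l_4\},\{l_2,l_3\}$ yields the third member of a \emph{syzygetic} triad of Steiner complexes, not an azygetic one.
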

\begin{proof}
By \cref{lem:steiner_complexes_to_subsets} the two Steiner complexes $\mS_1,\,\mS_2$ are syzygetic, hence there exist bitangents $l_1,\dots,l_4$ such that $\{l_1,l_2\},\{l_3,l_4\}\in\mS_1$ and $\{l_1,l_3\},\{l_2,l_4\}\in\mS_2$. 
Let $q\in A_2$, $\lambda\in\C$ such that $f=\lambda l_1l_2l_3l_4+q^2$, which is possible by \cref{thm:equivalences_sc}. Then $q$ is contained in the (complex) images of $\theta_1$ and $\theta_2$. Hence $\dim(\im_\C(\theta_1)+\im_C(\theta_2))\le 5$. Since $\theta_i$ is real, $\im_\C(\theta_i)$ has a real $\C$-basis ($i=1,2$), thus it follows that $\dim(\im(\theta_1+\theta_2))=\dim\left(\im(\theta_1)+\im(\theta_2)\right)\le 5$.
\end{proof}

We now work towards showing that the Steiner graph of a smooth quartic is the disjoint union of two $K_4$. We start with some facts about bitangents.

\begin{dfn}
Let $f\in\Rx_4$ be smooth, and let $l_1,l_2,l_3$ be three different bitangents of $f$. The triple is called \todfn{syzygetic} if there exists a Steiner complex $\mS$ such that $\{l_1,l_2\},\{l_3,l\}\in\mS$ for another bitangent $l$ of $f$. Otherwise, the triple is called \todfn{azygetic}.
Analogously, a 4-tuple $l_1,\dots,l_4$ of bitangents is called syzygetic if one (equivalently any) subset consisting of three bitangents in syzygetic.
\end{dfn}

\begin{lem}
\label{lem:azygetic_if_in_sc}
Let $l_1,l_2,l_3$ be three bitangents. The following are equivalent:
\begin{enumerate}
\item There exist bitangents $l_1',l_2',l_3'$ such that $l_1^{},l_2^{},l_3^{},l_1',l_2',l_3'$ are pairwise different, and there exists a Steiner complex $\mS$ such that\\ $\{l_1^{},l_1'\},\{l_2^{},l_2'\},\{l_3^{},l_3'\}\in\mS$.
\item The bitangents $l_1,l_2,l_3$ are azygetic.
\end{enumerate}
\end{lem}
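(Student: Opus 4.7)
I would prove the two implications separately.

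For (i) $\Rightarrow$ (ii), suppose (i) holds via a Steiner complex $\mS$ and assume for contradiction that $l_1, l_2, l_3$ is syzygetic via some other Steiner complex $\mS'$ with $\{l_1, l_2\}, \{l_3, l\} \in \mS'$. Since $l_1$'s partner in $\mS$ is $l_1' \neq l_2$, we have $\mS \neq \mS'$, so \cref{prop:intersection_sc} applies. The azygetic alternative is ruled out because the common bitangents $l_1, l_2 \in \mS \cap \mS'$ form a pair in $\mS'$, while in that alternative no two common bitangents are ever paired in either complex. In the syzygetic alternative there are exactly four common bitangents, splitting as $\{l_1, l_2\}, \{l_3, x\}$ in $\mS'$ (so $x$ is a fourth shared bitangent) and as $\{l_1, l_3\}, \{l_2, x\}$ or $\{l_1, x\}, \{l_2, l_3\}$ in $\mS$; either option forces one of $l_1', l_2', l_3'$ to lie in $\{l_1, l_2, l_3\}$, contradicting the pairwise distinctness assumption.

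For (ii) $\Rightarrow$ (i), I would fix a Cayley octad so that the 28 bitangents are indexed as $b_{ij}$, $1 \le i < j \le 8$. The simple graph $G$ on $\{1, \ldots, 8\}$ with edges $\{i_1, j_1\}, \{i_2, j_2\}, \{i_3, j_3\}$ is an $S_8$-invariant of the unordered triple, and since $S_8$ compatibly permutes bitangents and Steiner complexes (\cref{thm:equivalences_sc}), it suffices to handle one representative from each isomorphism class. There are exactly five such classes: the triangle, the path $P_4$, the star $K_{1,3}$, the path-plus-edge $P_3 + K_2$, and the three-edge matching $3K_2$. Using the rule that the unique Steiner complex containing a given pair of bitangents is of type 1 if the two index pairs share an index and of type 2 if they are disjoint, I would verify that $P_4$ and $3K_2$ yield syzygetic triples (for which nothing is to prove), while the triangle, the star, and $P_3 + K_2$ are azygetic. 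For each azygetic type I exhibit a Steiner complex placing the three bitangents in distinct pairs: the triangle $b_{12}, b_{13}, b_{23}$ and the star $b_{12}, b_{13}, b_{14}$ both lie in the type-2 complex $I = \{1, 2, 3, 4\}$ (with partners $b_{34}, b_{24}, b_{14}$ respectively $b_{34}, b_{24}, b_{23}$), and the triple $b_{12}, b_{23}, b_{45}$ lies in the type-1 complex $I = \{2, 4\}$ with partners $b_{14}, b_{34}, b_{25}$.

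The main obstacle is the construction step in (ii) $\Rightarrow$ (i): one must enumerate the $S_8$-orbits of triples, decide which of the five graph types are azygetic, and for each azygetic type guess an appropriate Steiner complex witnessing (i). Once the candidate complex is chosen the verification is mechanical from \cref{thm:equivalences_sc}(iii). The reduction to a finite case check via the $S_8$-action on the Cayley octad is what makes this case analysis tractable.
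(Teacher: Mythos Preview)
Your proof is correct. The paper itself does not argue the lemma at all but simply cites Dolgachev's book for both implications, so your proposal is a genuinely different, self-contained route.

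For (i)$\Rightarrow$(ii) you argue directly from \cref{prop:intersection_sc}; note that your syzygetic sub-case implicitly uses that the four bitangents listed there exhaust the common bitangents of $\mS$ and $\mS'$ (so that $l_3$ is forced to lie among them). The paper also reads \cref{prop:intersection_sc} this way---see the proof of \cref{lem:most4bitangents}---so this is consistent with the ambient conventions, but it is worth making that dependence explicit.

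For (ii)$\Rightarrow$(i) your $S_8$-orbit reduction to the five graph types on three edges, followed by explicit witnesses for the three azygetic types, is elementary and stays entirely within the Cayley-octad combinatorics already set up in \cref{thm:equivalences_sc}. Dolgachev's argument, by contrast, passes through the symplectic structure on the $2$-torsion of the Jacobian (theta characteristics), which explains conceptually why the syzygetic/azygetic dichotomy governs the existence of such a Steiner complex. Your approach trades that conceptual picture for a short, verifiable case check that requires no machinery beyond what the paper has already introduced.
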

\begin{proof}
(i)$\Rightarrow$(ii): \cite[Lemma 6.1.6.]{dolgachev2012}. (ii)$\Rightarrow$(i): \cite[Proposition 6.1.4.]{dolgachev2012}.
\end{proof}

\begin{lem}
\label{lem:azy_bit_no_intersection}
Three azygetic bitangents cannot intersect in a common point.
\end{lem}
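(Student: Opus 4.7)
The plan is to argue by contradiction, leveraging the \qdr\ attached to a Steiner complex. Assume the azygetic bitangents $l_1,l_2,l_3$ all pass through a common point $P\in\P^2$. By \cref{lem:azygetic_if_in_sc} there exist bitangents $l_1',l_2',l_3'$ such that $l_1,l_2,l_3,l_1',l_2',l_3'$ are pairwise distinct and $\{l_i,l_i'\}\in\mS$ for $i=1,2,3$ for a common Steiner complex $\mS$. By \cref{thm:equivalences_sc}, $\mS$ is realized by a \qdr\ $Q=\begin{pmatrix}q_0&q_1\\ q_1&q_2\end{pmatrix}$ of $f$ (so $f=q_0q_2-q_1^2$), and each product $l_i l_i'$ lies on the conic parametrized by $\lambda Q\lambda^T$, i.e.
\[
l_i l_i'=\lambda_i^2 q_0+2\lambda_i\mu_i q_1+\mu_i^2 q_2
\]
for some $(\lambda_i:\mu_i)\in\P^1$.

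Next I would evaluate at $P$: since $l_i(P)=0$ we get $l_i(P)l_i'(P)=0$, so the three projective points $(\lambda_i:\mu_i)$ are all roots of the binary quadratic form
\[
\Phi(\lambda,\mu):=\lambda^2 q_0(P)+2\lambda\mu q_1(P)+\mu^2 q_2(P).
\]
A nonzero binary quadratic form has at most two projective zeros, so once I verify the three parameter values are pairwise distinct, $\Phi$ must vanish identically, forcing $q_0(P)=q_1(P)=q_2(P)=0$. The conclusion then follows: all partial derivatives of $f=q_0q_2-q_1^2$ involve a factor among $q_0,q_1,q_2$ and therefore vanish at $P$, so $P$ is a singular point of $\vc(f)$, contradicting smoothness.

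The main point to verify is the distinctness of the three $(\lambda_i:\mu_i)$. This holds because the six bitangents are pairwise distinct, hence the three unordered factorizations $l_il_i'$ give pairwise distinct projective conics $[l_il_i']\in\P A_2$. On the other hand, the Gram tensor attached to $Q$ has rank $3$, so by \cref{lab:qdr_to_sos} the forms $q_0,q_1,q_2$ are linearly independent, which makes the parametrization $(\lambda:\mu)\mapsto[\lambda^2 q_0+2\lambda\mu q_1+\mu^2 q_2]$ a well-defined injection $\P^1\hookrightarrow\P A_2$. Consequently the three $(\lambda_i:\mu_i)$ are pairwise distinct, closing the argument.
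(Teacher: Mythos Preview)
Your argument is correct and is essentially the same as the paper's: both use \cref{lem:azygetic_if_in_sc} to place $\{l_i,l_i'\}$ in a common Steiner complex and then conclude that the rank~3 image of the associated Gram tensor has a base point at $P$, forcing $f$ to be singular there. The paper phrases this more directly---it simply notes $\im_\C\theta=\spn(l_1l_1',l_2l_2',l_3l_3')$ and that each generator vanishes at $P$---whereas you reach the equivalent statement $q_0(P)=q_1(P)=q_2(P)=0$ via the binary quadratic $\Phi$; your distinctness check for the $(\lambda_i:\mu_i)$ is exactly what justifies the paper's implicit claim that the three products span the image.
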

\begin{proof}
Let $l_1,l_2,l_3$ be three azygetic bitangents. By \cref{lem:azygetic_if_in_sc} we can find a Steiner complex $\mS$ such that $\{l_1^{},l_1'\},\{l_2^{},l_2'\},\{l_3^{},l_3'\}\in\mS$ for some bitangents $l_1',l_2',l_3'$.

The image of the corresponding Gram tensor is then given by $\spn(l_1^{}l_1',l_2^{}l_2',l_3^{}l_3')$.
If $l_1,l_2,l_3$ would intersect in a point, then this space has a \bp\ and thus $f$ has a singularity at this point.
\end{proof}

\begin{rem}
Summarizing some facts about syzygetic and azygetic bitangents and Steiner complexes.

Let $l_1,l_2,l_3$ be three syzygetic bitangents. Then there is a unique fourth bitangent $l_4$ and three (necessarily) syzgetic Steiner complexes $\mS_1,\mS_2,\mS_3$ such that 
\[
\{l_1,l_2\},\{l_3,l_4\}\in\mS_1,\, \{l_1,l_3\},\{l_2,l_4\}\in\mS_2,\, \text{ and }\, \{l_1,l_4\},\{l_2,l_3\}\in\mS_3.
\]
Furthermore, any two syzygetic Steiner complexes determine a unique third one, defined as above.

Let $l_1,l_2,l_3$ be three azygetic bitangents. Then, these three cannot intersect in a common point, and they are contained in a Steiner complex such that each of the three bitangents belongs to a different pair in the Steiner complexes (see \cref{lem:azygetic_if_in_sc}). This Steiner complex is not unique, and any two such azygetic Steiner complexes have exactly six common bitangents where each bitangent belongs to a different pair in each Steiner complex.
\end{rem}

\begin{example}
Consider the two different Steiner complexes $\{\{l_i^{},l_i'\}\colon i=1,\dots,6\}$ and $\{\{l_i^{},l_i''\}\colon i=1,\dots,6\}$. Then these are azygetic and have exactly the six bitangents $l_1,\dots,l_6$ in common.
These could for example be the Steiner complexes corresponding to the subsets $I=1357$ and $J=1358$ after fixing a Cayley octad and a \ldr.
\end{example}

We need one last simple lemma before turning to the main theorem of this section.

\begin{lem}
\label{lem:orth_change}
Let $n\ge 2,\, d\ge 1$. Let $f\in\Sigma_{n,2d}$ and let $\theta\in\gram(f)$ with corresponding subspace $U=\im(\theta)$ of dimension $r$. If $p_1,\dots,p_r\in U$ form a basis of $U$, then there exist $a_{ij}\in\R$ ($1\le i\le r,\, 1\le j\le i$) such that
\[
f=\sum_{i=1}^r \left(\sum_{j=1}^i a_{ij} p_j\right)^2.
\]
I.e. the $i$-th summand is a linear combination of only the first $i$ forms $p_1,\dots,p_i$.
\end{lem}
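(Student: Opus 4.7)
The plan is to prove this by induction on $r = \dim U$, which amounts to rephrasing the Cholesky factorization of the Gram matrix of $\theta$ in the language of symmetric tensors.

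For $r = 1$, the tensor must have the form $\theta = c\cdot p_1 \otimes p_1$ with $c > 0$, since $\theta \in \sy^+{U}$ has full rank on its image. Then $f = \mu(\theta) = c\,p_1^2 = (\sqrt{c}\,p_1)^2$, giving the required decomposition with the single coefficient $a_{11} = \sqrt{c}$.

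For the inductive step, I would use the basis $p_1,\dots,p_r$ to identify $\theta$ with a positive definite symmetric matrix $G = (g_{ij}) \in \bbS^r$, so that $f = \sum_{i,j} g_{ij}\,p_i p_j$. The key idea is to peel off one rank-one tensor chosen to absorb the last row and column of $G$. Setting
\[
a_{rr} := \sqrt{g_{rr}}, \qquad a_{rj} := g_{rj}/a_{rr} \quad \text{for } 1 \le j < r, \qquad q_r := \sum_{j=1}^{r} a_{rj}\,p_j,
\]
a direct computation shows that $G - vv^T$, with $v = (a_{r1},\dots,a_{rr})^T$, has vanishing last row and column, while its top-left $(r-1)\times(r-1)$ block is exactly the Schur complement of $g_{rr}$ in $G$. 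Since $G$ is positive definite, so is this Schur complement, and therefore $\theta - q_r \otimes q_r$ is a PSD symmetric tensor of rank $r-1$ whose image is $\spn(p_1,\dots,p_{r-1})$. Its image under $\mu$ is $f - q_r^2$. Applying the induction hypothesis to this smaller tensor, with the basis $p_1,\dots,p_{r-1}$, yields the remaining coefficients $a_{ij}$ for $1 \le j \le i \le r-1$, and adding back $q_r^2$ completes the claimed identity.

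The only non-formal input is positive definiteness of the Schur complement, which is classical (it follows from $\det G = g_{rr}\cdot\det(\text{Schur complement})$ together with positivity of all leading principal minors of a PD matrix). Hence there is no real obstacle: the entire lemma is simply the existence of a factorization $G = B^T B$ with $B$ lower triangular, translated into the coordinate-free setting of Gram tensors.
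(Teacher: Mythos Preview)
Your proof is correct and essentially the same as the paper's: both arguments recognize that the lemma amounts to factoring the Gram matrix $G$ of $\theta$ (in the basis $p_1,\dots,p_r$) as $G=L^TL$ with $L$ lower triangular, and then read off the $a_{ij}$ as the entries of $L$. You establish this factorization by the standard inductive Schur-complement (Cholesky) argument, whereas the paper obtains it in one line by starting from any sos decomposition $f=\sum f_i^2$, writing the coordinate matrix $A$ of the $f_i$ in the basis $p_1,\dots,p_r$, and applying the QL-decomposition $A=OL$ so that $A^TA=L^TL$.
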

\begin{proof}
This can be shown using the QL-decomposition of a matrix. Let $f_1,\dots,f_r\in U$ be a basis such that $f=\sum_{i=1}^r f_i^2$. Let $A$ be the $r\times r$ matrix containing the coordinates of $f_i$ wrt the basis $p_1,\dots,p_r$ in its $i$-th row. Then
\[
f=(f_1,\dots,f_r)(f_1,\dots,f_r)^T= (p_1,\dots,p_r)A^TA(p_1,\dots,p_r)^T.
\]
As $A$ is a rank $r$ square matrix there exists an orthogonal matrix $O\in\orth(r)$ and a lower triangular $r\times r$ real matrix $L$ such that $A=OL$. This shows
\[
f=(p_1,\dots,p_r)(OL)^T(OL)(p_1,\dots,p_r)^T=(p_1,\dots,p_r)L^TL(p_1,\dots,p_r)^T.
\]
Since $L$ is lower triangular, this sos representation of $f$ has the required form.
\end{proof}

We now show the following theorem which is our goal in this section.

\begin{thm}
\label{thm:steiner_graph_non_deg}
Let $f\in \Sigma$ be a smooth quartic. Then the Steiner graph of $f$ is the union of two disjoint $K_4$.
\end{thm}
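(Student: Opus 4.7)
The plan is to show that the Steiner graph of a smooth psd quartic $f$ has no edges beyond those of the two $K_4$'s produced in \cref{lem:Steiner_graph_1}. Concretely, for any two real psd rank-3 Gram tensors $\theta_1, \theta_2$ whose Steiner complexes $\mS_1, \mS_2$ lie in different rows of \cref{thm:psd_gram_tensors}, I need to check that the segment $[\theta_1, \theta_2]$ is not contained in the boundary of $\gram(f)$. A direct inspection of the eight index sets in \cref{thm:psd_gram_tensors} shows $|I \cap J| \in \{1,3\}$ in this case, so by \cref{lem:steiner_complexes_to_subsets} the pair $(\mS_1, \mS_2)$ is azygetic. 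Setting $U_i := \im(\theta_i)$ and observing that $\im(\theta_1+\theta_2) = U_1+U_2$, by \cref{thm:dim_faces} it suffices to prove $U_1 + U_2 = A_2$: then the midpoint of the segment has full-rank image and lies in the relative interior of $\gram(f)$, so the segment is not contained in the boundary.

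By \cref{prop:intersection_sc}(ii) the azygetic pair $(\mS_1, \mS_2)$ shares exactly six bitangents $l_1, \ldots, l_6$, paired as $\{l_i, l_i'\} \in \mS_1$ and $\{l_i, l_i''\} \in \mS_2$, with the 18 bitangents $l_i, l_i', l_i''$ all different. Thus $U_1 = \spn(l_i l_i')_{i=1}^6$ and $U_2 = \spn(l_i l_i'')_{i=1}^6$, each of dimension three. I pick three shared bitangents, say $l_1, l_2, l_3$; they lie in three different pairs of $\mS_1$, so \cref{lem:azygetic_if_in_sc} shows they form an azygetic triple, and \cref{lem:azy_bit_no_intersection} then forces them to be linearly independent in $A_1$, hence a basis. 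Since $\dim(U_1+U_2) \le 6 = \dim A_2$, proving $U_1+U_2 = A_2$ reduces to establishing the linear independence of the six products
\[
l_1 l_1',\ l_2 l_2',\ l_3 l_3',\ l_1 l_1'',\ l_2 l_2'',\ l_3 l_3''.
\]

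Assume a non-trivial relation $\sum_{i=1}^3 (a_i l_i l_i' + b_i l_i l_i'') = 0$ and set $c_i := a_i l_i' + b_i l_i'' \in \spn(l_i', l_i'')$, so that $\sum_{i=1}^3 l_i c_i = 0$. Because $(l_1, l_2, l_3)$ is a basis of $A_1$, the Koszul syzygies of this sequence force $c_i = \sum_j S_{ij} l_j$ for a unique skew-symmetric $S \in \C^{3\times 3}$, so $c_i \in \spn(l_j, l_k) \cap \spn(l_i', l_i'')$ where $\{i,j,k\} = \{1,2,3\}$. If this intersection is 2-dimensional for some $i$, then $\spn(l_j, l_k) = \spn(l_i', l_i'')$, forcing $l_i' \in \spn(l_j, l_k)$; then $l_j, l_k, l_i' \in \mS_1$, being in three different pairs and hence azygetic by \cref{lem:azygetic_if_in_sc}, would share a common point of $\P^2$, contradicting \cref{lem:azy_bit_no_intersection}. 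If all $c_i = 0$, distinctness of $l_i'$ and $l_i''$ forces $a_i = b_i = 0$ for all $i$, contradicting non-triviality. So for each nonzero $c_i$ the intersection is 1-dimensional with generator $v_i$, and $c_i = \lambda_i v_i$ for some scalar $\lambda_i$.

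The hard part will be closing this 1-dimensional case. Matching $c_i = \lambda_i v_i$ with the skew-symmetric constraint $c_i = \sum_j S_{ij} l_j$ yields a linear system in $(\lambda_1, \lambda_2, \lambda_3)$ whose non-trivial solvability is equivalent to the vanishing of a $3\times 3$ determinant, which is nothing but the concurrency in $\P^2$ of the three lines $\vc(v_i)$. My plan to rule out this concurrency is to exploit further azygetic triples among the 18 bitangents — in particular the triples $\{l_i, l_i', l_i''\}$, which are azygetic because they appear in suitable auxiliary Steiner complexes (for example, in the representative case $(\mS_1,\mS_2) = (\mS_{1357},\mS_{1358})$, the complex $\mS_{1234}$ contains $b_{13}, b_{57}, b_{58}$ in three different pairs) — and to apply \cref{lem:azy_bit_no_intersection} to these and related triples to forbid any common point of $\vc(v_1), \vc(v_2), \vc(v_3)$. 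Once concurrency is excluded, $\lambda_1 = \lambda_2 = \lambda_3 = 0$, hence all $c_i$, $a_i$, $b_i$ vanish, the six products are linearly independent, and $U_1+U_2 = A_2$ as required.
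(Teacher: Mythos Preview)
Your setup is clean and the easy cases are handled correctly: the Koszul syzygy argument is valid, the 2-dimensional intersection case is ruled out exactly as in the paper's \cref{prop:steiner_graph_deg_no_rk_4}, and the ``all $c_i=0$'' case is immediate. The reduction of the remaining case to concurrency of the three lines $\vc(v_1),\vc(v_2),\vc(v_3)$ is also correct.

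The gap is in that remaining case. Your stated plan---use \cref{lem:azy_bit_no_intersection} on triples such as $\{l_i,l_i',l_i''\}$ and ``related triples''---does not reach the claim, because the $v_i$ are \emph{not} bitangents. Geometrically, $\vc(v_i)$ is the line joining $Q_i=\vc(l_j,l_k)$ and $P_i=\vc(l_i',l_i'')$; the azygetic information you cite (e.g.\ that $l_i$ does not pass through $P_i$, or that $l_1',l_2',l_3'$ are not concurrent) constrains where the $P_i$ and $Q_i$ sit, but concurrency of three cevians through the vertices $Q_1,Q_2,Q_3$ is a codimension-one condition that those incidence constraints do not exclude. Put differently, concurrency for your chosen triple is equivalent to $\dim(U_1+U_2)\le 5$, so ruling it out is precisely the hard step, and the combinatorics of syzygetic/azygetic triples alone is too coarse to detect it; the paper itself notes (in the remark following the proof) that the linear-algebra route leads to polynomial conditions requiring computer verification. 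The paper instead argues analytically: assuming $\dim(U_1+U_2)=5$ with $\spn(q)=\im\theta_1\cap\im\theta_2$, it writes $f=p_1p_2+c^2q^2=q_1q_2+d^2q^2$, shows $c^2\neq d^2$, and then splits into the cases $q$ irreducible (where \cref{prop:afbg_representation}, via Noether's $AF+BG$, forces $l_1,\dots,l_4$ to be a syzygetic quadruple of bitangents common to $\mS_1,\mS_2$) and $q=l_1l_2$ reducible (where a point-by-point intersection analysis and another application of $AF+BG$ push the span down to dimension $4$, contradicting \cref{prop:steiner_graph_deg_no_rk_4}). That use of the equation of $f$ is what your purely linear/incidence approach is missing.
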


We already know that there are two disjoint $K_4$ contained in the Steiner graph, hence we need to show that there are no additional edges. We first show that none of those edges can have rank $\le 4$.

For the rest of this section, we fix the following setup. Let $f\in \Sigma$ smooth and let $\theta_1,\,\theta_2\in\gram(f)$ be two real psd rank 3 Gram tensors with corresponding Steiner complexes $\mS_1,\,\mS_2$ such that $\mS_1$ and $\mS_2$ are azygetic. By \cref{lem:Steiner_graph_1} these are the Gram tensors left to consider. Equivalently, syzygetic Steiner complexes in the Steiner graph are connected by an edge and azygetic ones are not.

\begin{prop}
\label{prop:steiner_graph_deg_no_rk_4}
$\dim \left(\im(\theta_1)+\im(\theta_2)\right)\ge 5$.
\end{prop}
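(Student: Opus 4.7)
The plan is to argue by contradiction. Assume $\dim W \ge 2$ for $W := \im(\theta_1) \cap \im(\theta_2)$; then $\dim W = 2$ since $\dim W = 3$ would force $\im(\theta_1) = \im(\theta_2)$ and hence $\theta_1 = \theta_2$ by the injectivity in \cref{lab:qdr_to_sos}. The goal is to deduce a factorisation of $f$ over $\C$ into a product of two conics, contradicting the smoothness of $f$.

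The engine is a Plücker-type identity on each QDR. For a QDR $Q_i$ of $f$ representing $\theta_i$ and vectors $\lambda, \mu \in \C^2$, with $M = \bigl(\begin{smallmatrix} \lambda \\ \mu \end{smallmatrix}\bigr)$, the elementary identity $\det(MQ_iM^T) = (\det M)^2 \det Q_i$ rewrites as
\[
(\lambda Q_i \lambda^T)(\mu Q_i \mu^T) - (\lambda Q_i \mu^T)^2 = (\det M)^2 f.
\]
Every $v \in \im(\theta_i)$ has the form $\lambda Q_i \mu^T$, and $(\det M)^2$ depends only on $v$: it defines an intrinsic quadratic form $D_i$ on $\im(\theta_i)$ vanishing exactly on the base conic $C_i$, and yields a factorisation $v^{(i)}_1 v^{(i)}_2 = v^2 + D_i(v) f$ with $v^{(i)}_1, v^{(i)}_2 \in \im(\theta_i)$. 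The selection step is to find $v \in W$ with $D_1(v) = D_2(v) \ne 0$: either $D_1|_W = D_2|_W$ (and any $v \in W \setminus (C_1 \cup C_2)$ works) or $D_1|_W - D_2|_W$ is a nonzero binary quadratic form on $W$ whose zero set is one or two lines, on each of which infinitely many $v$ satisfy $D_1(v) = D_2(v) =: D \ne 0$.

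At such a $v$ the two factorisations $v^{(1)}_1 v^{(1)}_2 = v^2 + Df = v^{(2)}_1 v^{(2)}_2$ are of the same quartic, with factors in $\im(\theta_1)$ and $\im(\theta_2)$ respectively. If $v^2 + Df$ does not decompose into four linear factors, unique factorisation in $\C[x,y,z]$ forces $\{v^{(1)}_1, v^{(1)}_2\} = \{v^{(2)}_1, v^{(2)}_2\}$ up to scalars, placing both factors in $W$. Three vectors $v^{(1)}_1, v^{(1)}_2, v$ in the 2-dimensional $W$ then admit a linear relation which, substituted into $Df = v^{(1)}_1 v^{(1)}_2 - v^2$, presents $f$ as a binary quadratic form in $v^{(1)}_1, v^{(1)}_2$; such a form always factors over $\C$ as a product of two conics, contradicting the smoothness of $f$.

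In the remaining case, $v^2 + Df$ splits into four linear factors, so the $v^{(1)}_j, v^{(2)}_j$ are rank-$2$ quadrics on $C_1, C_2$ and hence bitangent products $v^{(1)}_j = l_{a_j} l'_{a_j}$ with $\{l_{a_j}, l'_{a_j}\} \in \mS_1$ and $v^{(2)}_j = l_{c_j} l''_{c_j}$ with $\{l_{c_j}, l''_{c_j}\} \in \mS_2$. Matching the two resulting factorisations into four linear forms, the multiset $\{l_{a_1}, l'_{a_1}, l_{a_2}, l'_{a_2}\}$ must equal $\{l_{c_1}, l''_{c_1}, l_{c_2}, l''_{c_2}\}$; the left side contains two bitangents exclusive to $\mS_1$ and the right contains none, contradicting the azygetic hypothesis that the $18$ bitangents $l_i, l'_i, l''_i$ are pairwise distinct. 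This degenerate linear-factor case is the only place the azygetic assumption enters decisively, and cleanly isolating it is the main technical obstacle.
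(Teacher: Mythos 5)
Your approach is genuinely different from the paper's. The paper argues directly: if $\dim(\im\theta_1+\im\theta_2)\le 4$, the five products $l_1l_1',l_2l_2',l_3l_3',l_1l_1'',l_2l_2''$ are linearly dependent, and factoring out $l_1$ and $l_2$ produces an azygetic triple of concurrent bitangents, contradicting \cref{lem:azy_bit_no_intersection}. You instead introduce the discriminant form $D_i(v)=(\det M)^2$ on each QDR and use the Pl\"ucker identity $v^2+D_i(v)f=(\lambda Q_i\lambda^T)(\mu Q_i\mu^T)$ to compare two factorisations of one quartic. That is an elegant idea, and the two terminal cases are sound: when the quartic does not split into four linear forms, unique factorisation forces both conic factors into $W$, and the linear relation among $v,v^{(1)}_1,v^{(1)}_2$ in the 2-dimensional $W$ writes $Df$ as a binary quadratic in $v^{(1)}_1,v^{(1)}_2$, hence $f$ as a product of two conics; when it does split, the rank-$\le 2$ points on $C_i$ are the six bitangent products, and the azygetic hypothesis (18 distinct bitangents) makes the multiset comparison fail.

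However, the selection step has a real gap. You need $v\in W$ with $D_1(v)=D_2(v)\ne 0$, and you assert this is automatic when $D_1|_W\ne D_2|_W$ because one of the (at most two) zero lines of $D_1|_W-D_2|_W$ has nonzero common value. That is not justified: if $[v_0]$ is a zero of $D_1|_W-D_2|_W$ and $D_1(v_0)=0$, then also $D_2(v_0)=0$, so the entire line has $D_1=D_2=0$. The bad case is exactly $D_1|_W=cD_2|_W$ with $c\ne 0,1$ (equivalently, $C_1\cap\P W$ and $C_2\cap\P W$ agree as divisors on the line $\P W$ but the normalisations forced by $\det Q_i=f$ disagree), and then every zero of $D_1|_W-D_2|_W$ lies in $C_1\cap C_2$, leaving no admissible $v$. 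Nothing in your argument rules this out; the azygetic hypothesis, which you correctly identify as decisive, is only invoked in the degenerate terminal case and never in the selection step. Either prove that $D_1|_W$ and $D_2|_W$ proportional with ratio $\ne 1$ is impossible (presumably using the azygetic structure or the intrinsic normalisation), or handle that subcase by a separate argument; as written the proof is incomplete at this point.
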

\begin{proof}
Since $\mS_1$ and $\mS_2$ are azygetic, we can pick pairs of bitangents such that
\[
\{l_1^{},l_1'\},\{l_2^{},l_2'\},\{l_3^{},l_3'\}\in\mS_1 \text{ and }\{l_1^{},l_1''\},\{l_2^{},l_2''\}\in\mS_2.
\]
Assume $\dim (\im(\theta_1)+\im(\theta_2))\le 4$. Then the quadratic forms $l_1^{}l_1',l_2^{}l_2',l_3^{}l_3',l_1^{}l_1'',l_2^{}l_2''$ are linearly dependent. We therefore find $\lambda_1,\dots,\lambda_5\in\C$ not all zero such that
\begin{align*}
0&=\lambda_1 l_1^{}l_1'+\lambda_2 l_2^{}l_2'+\lambda_3 l_3^{}l_3' +\lambda_4 l_1^{}l_1''+\lambda_5 l_2^{}l_2''\\
&=l_1(\lambda_1 l_1'+\lambda_4 l_1'')+ l_2 (\lambda_2 l_2'+\lambda_5 l_2'') + \lambda_3 l_3^{}l_3'.
\end{align*}
Let $P\in\P^2$ be the intersection point $\vc(l_1,l_2)$. Then either $l_3$ or $l_3'$ has to vanish at $P$ if $\lambda_3\neq 0$. But both triples $l_1,l_2,l_3$ and $l_1^{},l_2^{},l_3'$ are azygetic by \cref{lem:azygetic_if_in_sc}, hence cannot intersect in a common point by \cref{lem:azy_bit_no_intersection}. This means that $\lambda_3=0$ which shows
\[
0=l_1(\lambda_1 l_1'+\lambda_4 l_1'')+ l_2 (\lambda_2 l_2'+\lambda_5 l_2'').
\]
Since two bitangents cannot be multiples of each other, $l_1\in\spn(l_2',l_2'')$ and $l_2\in\spn(l_1',l_1'')$. But again the triples $l_1^{},l_2',l_2''$ and $l_2^{},l_1',l_1''$ are azygetic by the next lemma and therefore cannot intersect in a common point.
\end{proof}

\begin{lem}
With everything as in \cref{prop:steiner_graph_deg_no_rk_4}, the triples $l_1^{},l_2',l_2''$ and $l_2^{},l_1',l_1''$ of bitangents are azygetic.
\end{lem}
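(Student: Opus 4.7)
My plan is to pass to the combinatorial picture via the Cayley octad. By \cref{thm:psd_gram_tensors} both $\mathcal{S}_1$ and $\mathcal{S}_2$ are represented by size-four subsets $I_1, I_2 \subset \{1, \ldots, 8\}$, and since $\mathcal{S}_1, \mathcal{S}_2$ are azygetic, \cref{lem:steiner_complexes_to_subsets} together with the fact that $\mathcal{S}_I = \mathcal{S}_{I^c}$ lets me choose representatives with $|I_1 \cap I_2| = 3$. Write $I_1 \cap I_2 = \{a,b,c\}$, $I_1 = \{a,b,c,d\}$, $I_2 = \{a,b,c,e\}$, and $I_1^c \cap I_2^c = \{f,g,h\}$. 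The six common bitangents of $\mathcal{S}_1, \mathcal{S}_2$ then split into the ``inner'' triple $\{b_{ab}, b_{ac}, b_{bc}\}$ and the ``outer'' triple $\{b_{fg}, b_{fh}, b_{gh}\}$.

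I would then split into three cases according to where the common bitangents $l_1, l_2$ lie. If $l_1, l_2$ are both inner, write $l_1 = b_{\alpha\beta}, l_2 = b_{\alpha\gamma}$ with $\{\alpha, \beta, \gamma\} = \{a,b,c\}$; one has $l_2' = b_{I_1 \setminus \{\alpha, \gamma\}} = b_{\beta d}$ and $l_2'' = b_{I_2 \setminus \{\alpha, \gamma\}} = b_{\beta e}$, so the triple $l_1, l_2', l_2''$ is $b_{\alpha\beta}, b_{\beta d}, b_{\beta e}$, all three sharing the Cayley octad index $\beta$. The both-outer case is identical by symmetry. In the mixed case $l_1 = b_{\alpha\beta}$ inner and $l_2 = b_{\gamma\delta}$ outer, letting $\eta$ denote the unique element of $\{f,g,h\} \setminus \{\gamma, \delta\}$, one finds $l_2' = b_{e\eta}$ and $l_2'' = b_{d\eta}$, so the triple shares the index $\eta$ between its last two entries and uses five distinct indices $\{\alpha, \beta, d, e, \eta\}$ overall.

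To conclude azygeticity I would invoke the forward implication of \cref{lem:azygetic_if_in_sc} by producing, in each case, a size-two Steiner complex $\mathcal{S}_{\{x, y\}}$ whose pairs $\{b_{xk}, b_{yk}\}$ contain all three members of the triple in three separate pairs with six pairwise distinct bitangents. For the star case the choice $\mathcal{S}_{\{\beta, \gamma\}}$ with $k \in \{\alpha, d, e\}$ does the job (the partners $b_{\alpha\gamma} = l_2, b_{\gamma d}, b_{\gamma e}$ are distinct from each other and from the triple), and for the mixed case $\mathcal{S}_{\{\alpha, \eta\}}$ with $k \in \{\beta, e, d\}$ works (partners $b_{\eta\beta}, b_{\alpha e}, b_{\alpha d}$). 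The triple $l_2, l_1', l_1''$ is handled verbatim by interchanging the roles of $l_1$ and $l_2$.

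The main obstacle is really bookkeeping: one must compute $l_2'$ and $l_2''$ carefully in each configuration using that the $\mathcal{S}_I$-partner of $b_{xy}$ with $\{x,y\} \subset I$ is $b_{I \setminus \{x,y\}}$, and then pick the right size-two Steiner complex to witness azygeticity. The mixed case is the least symmetric, as the three bitangents do not share a common Cayley octad index, but it still yields because any size-two complex $\mathcal{S}_{\{x, \eta\}}$ with $x \in \{\alpha, \beta\}$ absorbs all three bitangents of the triple.
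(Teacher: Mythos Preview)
Your argument is correct and takes a genuinely different route from the paper. The paper works abstractly with syzygetic triads: it forms the three pairwise syzygetic Steiner complexes $\mS_1,\mT_1,\mT_2$ built from the four bitangents $l_1,l_1',l_2,l_2'$, invokes \cite[Theorem 5.4.10]{dolgachev2012} to say these three complexes partition all 28 bitangents, and then argues by contradiction that $l_1''$ must land in $\mT_1$ rather than $\mT_2$; the azygeticity of $l_2,l_1',l_1''$ then falls out of \cref{lem:azygetic_if_in_sc}. You instead fix a Cayley octad, use \cref{thm:psd_gram_tensors} to know that both $\mS_1,\mS_2$ are of size-four type, normalise to $|I_1\cap I_2|=3$, and compute all indices explicitly, exhibiting in each case a size-two Steiner complex witnessing azygeticity. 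Your approach is more hands-on and entirely self-contained within the paper's stated results (it avoids the extra citation to Dolgachev's partition theorem), at the cost of a small case split and some index bookkeeping; the paper's approach is more conceptual and in fact does not use that the two Steiner complexes are of size-four type, so it would apply verbatim to any azygetic pair of Steiner complexes. One minor point: in your mixed case you only write out $l_1$ inner, $l_2$ outer, but the opposite orientation is covered by the evident inner/outer symmetry $(I_j\leftrightarrow I_j^c)$, so nothing is missing.
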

\begin{proof}
Consider the Steiner complex $\mS_1=\{\{l_i^{},l_i'\}\colon i=1,\dots,6\}$ and the four bitangents $l_1^{},l_1',l_2^{},l_2'$.  Then, there exist Steiner complexes $\mT_1,\mT_2$ such that $\mT_1$ contains $\{l_1,l_2\},\{l_1',l_2'\}$ and $\mT_2$ contains $\{l_1^{},l_2'\},\{l_1',l_2^{}\}$. By \cite[Theorem 5.4.10.]{dolgachev2012} any of the 28 bitangents of $f$ is contained in one the three Steiner complexes. Especially, one contains the bitangent $l_1''$.
This bitangent cannot be contained in $\mT_2$: otherwise $\mT_2$ contains $\{l_1^{},l_2'\},\{l_1',l_2^{}\}$ and $\{l_1'',L\}$ for some bitangent $L$. Especially $l_1^{},l_2^{},l_1''$ are azygetic by \cref{lem:azygetic_if_in_sc}.
But $\mS_2$ contains $\{l_1^{},l_1''\}$ and $\{l_2^{},l_2''\}$. Hence $l_1^{},l_2^{},l_1''$ are syzygetic, a contradiction.

Therefore $l_1''$ is contained in the Steiner complex $\mT_1$. Then $l_1',l_1'',l_2^{}$ are contained in $\mT_1$ as in \cref{lem:azygetic_if_in_sc}, hence the triple is azygetic.

An analogous argument shows that $l_1^{},l_2',l_2''$ form an azygetic triple.
\end{proof}

\begin{rem}
We first sketch the second part of the proof of \cref{thm:steiner_graph_non_deg}. If $\dim \left(\im_\C(\theta_1)+\im_\C(\theta_2)\right)= 5$ and $q\in A_2$ spans the intersection of the images, then we get representations $p_1p_2+q^2=cf$ and $q_1q_2+q^2=df$ for some $p_1,p_2\in\im_\C(\theta_1)$, $q_1,q_2\in\im_\C(\theta_2)$ and $c,d\in\C$. 

Let $P_1,\dots,P_8$ be the intersection points of $\vc(q)$ and $\vc(f)$. The equations show that the products $q_1q_2$ and $p_1p_2$ are tangent to $f$ at the points $P_1,\dots,P_8$. 
A form $g\in\P A_2$ has 5 degrees of freedom and we require $\vc(g)$ to be tangent at 4 points. Therefore, it seems unlikely to find two different products of two quadratic forms satisfying the conditions.

In the case where the two Steiner complexes $\mS_1,\,\mS_2$ are syzygetic, this is easily possible: as in \cref{lem:Steiner_graph_1} there can be four bitangents $l_1,l_2,l_3,l_4\in A_1$ of $f$ such that $q_1=l_1l_2$, $q_2=l_3l_4$ and $p_1=l_1l_3$, $p_2=l_2l_4$.
\end{rem}

We prepare with an application of \cref{thm:afbg}.

Assume we have an equation $f=p_1p_2+q^2$ for some $p_1,p_2,q\in A_2$. Let $\vc(f,p_1)=\{P_1,\dots,P_4\}$ and $\vc(f,p_2)=\{P_5,\dots,P_8\}$ with $P_1,\dots,P_8\in\P^2$ (not necessarily different). 
For any two different points $P,Q\in\P^2$, we write $\ol{PQ}$ for the line through $P$ and $Q$. Let
\[
L_1=\ol{P_1P_2},\,L_2=\ol{P_3P_4},\,L_3=\ol{P_5P_6},\,L_4=\ol{P_7P_8}
\]
if all points are different. If any two points $P_i,P_j$ are the same, the line $\ol{P_iP_j}$ should be understood as the tangent line at $P_i$ of $\vc(p_1)$ or $\vc(p_2)$ depending on whether $1\le i\le 4$ or $5\le i \le 8$. 
Moreover, for $i=1,\dots,4$ write $l_i\in A_1$ for the linear form such that $\vc(l_i)=L_i$.

\begin{prop}
\label{prop:afbg_representation}
In the situation above, if $q$ is also irreducible, there exists $h\in \spn(p_1,p_2,q)\subset A_2$ and $0\neq\lambda\in\C$ such that
\[
f=\lambda l_1l_2l_3l_4+qh.
\]
\end{prop}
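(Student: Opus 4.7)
The plan is to exploit the fact that the defining relation $f = p_1 p_2 + q^2$ restricts on each of the conics $\vc(p_1), \vc(p_2)$ to the congruence $f \equiv q^2$, so that the zeros of $q$ on $\vc(p_i)$ are exactly the four points $\vc(f, p_i)$. First I would compare the restrictions of $l_1 l_2$ and of $q$ to $\vc(p_1)$, concluding that the two quadrics agree up to a scalar modulo $p_1$; this yields an identity $l_1 l_2 = \lambda_1 q + c_1 p_1$ for some $\lambda_1, c_1 \in \C$. The same reasoning applied to $\vc(p_2)$ gives $l_3 l_4 = \lambda_2 q + c_2 p_2$. Multiplying the two identities and eliminating $p_1 p_2$ via the defining relation will then produce the desired expression for $f$.

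To carry out the first step, identify the smooth conic $\vc(p_1)$ with $\P^1$ via its usual degree-two parametrization, so that quadratic forms on $\P^2$ restrict to sections of $\mathcal{O}_{\P^1}(4)$. By construction, the line $L_1$ meets $\vc(p_1)$ exactly in $P_1, P_2$ and $L_2$ meets it exactly in $P_3, P_4$, so the zero divisor of $(l_1 l_2)|_{\vc(p_1)}$ equals $P_1+P_2+P_3+P_4$. The same divisor is cut out by $q|_{\vc(p_1)}$ since $f \equiv q^2 \pmod{p_1}$. Two sections of $\mathcal{O}_{\P^1}(4)$ with the same zero divisor are proportional, so $(l_1 l_2 - \lambda_1 q)|_{\vc(p_1)} = 0$ for some $\lambda_1 \in \C$; because $p_1$ is irreducible of degree two, this forces $l_1 l_2 - \lambda_1 q = c_1 p_1$ with $c_1 \in \C$. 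The symmetric computation on $\vc(p_2)$ supplies the companion identity.

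For the third step, multiplying yields
\[
l_1 l_2 l_3 l_4 = \lambda_1 \lambda_2 q^2 + \lambda_1 c_2\, q p_2 + \lambda_2 c_1\, q p_1 + c_1 c_2\, p_1 p_2,
\]
and substituting $p_1 p_2 = f - q^2$ and solving for $f$ gives $c_1 c_2 \, f = l_1 l_2 l_3 l_4 + q\,\widetilde h$ with $\widetilde h \in \spn(p_1, p_2, q)$. Dividing by $c_1 c_2$ establishes the claim with $\lambda = 1/(c_1 c_2)$ and $h = \widetilde h / (c_1 c_2) \in \spn(p_1, p_2, q)$. The scalar $c_1$ cannot vanish: $c_1 = 0$ would give $l_1 l_2 = \lambda_1 q$, exhibiting $q$ as a product of two linear forms and contradicting its irreducibility. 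Similarly $c_2 \ne 0$, so $\lambda \ne 0$ as required.

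The main obstacle I anticipate lies in the first step outside the generic configuration. When some of the points $P_i$ coincide, the divisor equality must be verified infinitesimally using the tangent-line convention in the statement; this works because at a simple tangency of $q$ with $\vc(p_1)$, the common tangent line meets $\vc(p_1)$ with multiplicity two, matching the double zero of $q$ there. Worse, the argument presumes $p_1$ is irreducible. If $p_1 = m_1 m_2$ splits into two lines, then the divisor-matching step requires that the labelling $\{P_1, P_2\}, \{P_3, P_4\}$ be compatible with the splitting of $\vc(p_1, q)$ between the two components; under such a labelling $l_1, l_2$ are merely scalar multiples of $m_1, m_2$, so $l_1 l_2 = c_1 p_1$ directly (with $\lambda_1 = 0$), and the remainder of the argument goes through without change.
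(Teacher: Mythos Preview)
Your argument and the paper's are essentially the same: both show that $p_1,\,q,\,l_1l_2$ are linearly dependent in $A_2$ (and likewise $p_2,\,q,\,l_3l_4$), then multiply the two relations and substitute $p_1p_2=f-q^2$. The only difference is how the linear dependence is obtained. The paper applies Noether's $AF{+}BG$ theorem with $q$ and $l_1l_2$ as the two generators; since $\vc(q,l_1l_2)$ is finite and $p_1$ passes through all of $P_1,\dots,P_4$ with sufficient multiplicity, one gets $p_1=\alpha\,l_1l_2+\beta\,q$ directly. In effect this compares the divisors cut by $p_1$ and by $l_1l_2$ on the smooth conic $\vc(q)$, whereas you compare the divisors of $q$ and of $l_1l_2$ on $\vc(p_1)$.

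The paper's route uses the hypothesis where it is strongest: because $q$ is irreducible, $\vc(q)$ is a smooth conic, and on it both $p_1$ and $l_1l_2$ cut out $P_1+P_2+P_3+P_4$ \emph{independently of how the four points were paired}. Your route via $\vc(p_1)$ instead forces a case distinction on the reducibility of $p_1$, and your handling of the reducible case is incomplete. You cover only the labelling in which $\{P_1,P_2\}$ and $\{P_3,P_4\}$ lie on the two component lines of $p_1$; but the proposition is stated for an arbitrary ordering of $P_1,\dots,P_4$, and for a non-compatible ordering $L_1,L_2$ are \emph{not} the components of $\vc(p_1)$. In that situation your divisor comparison yields one congruence $l_1l_2\equiv\alpha_i q$ on each component line $\vc(m_i)$, and nothing in your argument forces $\alpha_1=\alpha_2$. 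The relation $l_1l_2\in\spn(p_1,q)$ is still true there, but the clean way to see it is precisely to work on $\vc(q)$, as the paper does. (You also do not discuss $p_1$ a double line, though that case is harmless since then $L_1=L_2=\vc(p_1)_{\mathrm{red}}$.)
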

\begin{proof}
By \cref{thm:afbg}, there exist $\lambda_1,\dots,\lambda_6\in\C$ such that
\[
\lambda_1 p_1=\lambda_2 l_1l_2+\lambda_3 q,\quad \lambda_4 p_2=\lambda_5 l_3l_4+\lambda_6 q
\]
by choice of the forms $l_1,\dots,l_4$.
If $\lambda_2\lambda_5=0$, then either $q=\nu p_1$ or $q=\nu p_2$ for some $\nu\in\C$. However, the image of the Gram tensor corresponding to the representation $f=p_1p_2+q^2$ has image $\spn(p_1,p_2,q)$. Hence, this Gram tensor has rank 2 which is not possible since $f$ is smooth.
If $\lambda_1\lambda_4=0$, then $q$ was reducible, contradicting the assumption.

We therefore see $\spn(p_1,p_2,q)=\spn(l_1l_2,l_3l_4,q)$. Multiplying the two equations, we get
\[
\lambda_1\lambda_4p_1p_2=\lambda_2\lambda_5l_1l_2l_3l_4+q(\lambda_3\lambda_5l_3l_4+\lambda_3\lambda_6q+\lambda_2\lambda_6l_1l_2).
\]
Substituting with the equation $f=p_1p_2+q^2$, we get
\[
\lambda_1\lambda_4(f-q^2)=\lambda_2\lambda_5l_1l_2l_3l_4+q(\lambda_3\lambda_5l_3l_4+\lambda_3\lambda_6q+\lambda_2\lambda_6l_1l_2),
\]
and thus
\[
f=\frac{\lambda_2\lambda_5}{\lambda_1\lambda_4}l_1l_2l_3l_4+q\underbrace{\left(\frac{\lambda_3\lambda_5}{\lambda_1\lambda_4}l_3l_4+\frac{\lambda_3\lambda_6}{\lambda_1\lambda_4}q+\frac{\lambda_2\lambda_6}{\lambda_1\lambda_4}l_1l_2+q\right)}_{h:=}.
\]
\end{proof}

\begin{proof}[{Proof of \cref{thm:steiner_graph_non_deg}}]
Let $\spn(q)=\im(\theta_1)\cap\im(\theta_2)$. By \cref{lem:orth_change} there exist $p_1,p_2\in\im_\C(\theta_1)$, $q_1,q_2\in\im_\C(\theta_2)$, and $0\neq c,d\in\R$ such that
\begin{align}
\label{eq:sc_eq_1.1}
f&=p_1p_2+(cq)^2,\\
\label{eq:sc_eq_1.2}
f&=q_1q_2+(dq)^2.
\end{align}

If $c^2=d^2\neq 0$, subtracting equations (\ref{eq:sc_eq_1.1}) and (\ref{eq:sc_eq_1.2}) yields $p_1p_2=q_1q_2$. Therefore, either $\im(\theta_1)=\im(\theta_2)$ or all these forms are reducible and we can write $p_1=l_1l_2, p_2=l_3l_4$ and $q_1=l_1l_3, q_2=l_2l_4$ for some $l_1,l_2,l_3,l_4\in A_1$. 
But the equations show that $l_1,\dots,l_4$ are bitangents and $\{l_1,l_2\},\{l_3,l_4\}$ are in the first Steiner complex and $\{l_1,l_3\},\{l_2,l_4\}$ are in the second. Hence the two Steiner complexes are syzygetic, a contradiction.

Assume $q$ is irreducible. By \cref{prop:afbg_representation} there exist $h_1\in\im_\C(\theta_1)$, $h_2\in\im_\C(\theta_2)$, and $0\neq \nu,\lambda\in\C$ such that
\begin{align}
\label{eq:sc_eq_2.1}
f&=\lambda l_1l_2l_3l_4 +qh_1,\\
\label{eq:sc_eq_2.2}
f&=\nu l_1l_2l_3l_4 +qh_2.
\end{align}
Subtracting the two equations, we get 
\[
(\nu-\lambda)l_1l_2l_3l_4=q(h_1-h_2).
\]
Since $q$ is irreducible, it follows that $\nu=\lambda$ and $h_1=h_2$. Hence, $h_1=h_2\in\im_\C(\theta_1)\cap\im_\C(\theta_2)=\spn(q)$ and equations (\ref{eq:sc_eq_2.1}) and (\ref{eq:sc_eq_2.2}) now read
\begin{align*}
f=\lambda l_1l_2l_3l_4 +\lambda' q^2\\
f=\nu l_1l_2l_3l_4 +\nu' q^2
\end{align*}
for some $\lambda',\nu'\in\C$.
This shows that $l_1,\dots,l_4$ are bitangents of $f$. Moreover, wlog 
\[
\{l_1,l_2\},\{l_3,l_4\}\in\mS_1\quad \text{and}\quad \{l_1,l_3\},\{l_2,l_4\}\in\mS_2,
\]
which means that $\mS_1$ and $\mS_2$ are syzygetic, a contradiction.

Now, we may assume $q$ is reducible. We write $q=l_1l_2$ for some $l_1,l_2\in A_1$. If any of the forms $p_1,p_2,q_1,q_2$ was divisible by $l_1$ or $l_2$, the form $f$ was as well. Thus all forms $p_1,p_2,q_1,q_2$ can only share two points with $l_1$ and $l_2$.

Let $\vc(f,l_1)=\{P_1,\dots,P_4\}$ and $\vc(f,l_2)=\{P_5,\dots,P_8\}$. 
Then, wlog we may assume
\begin{align}
\label{eq:point_split_1}
\vc(p_1,f)=\{P_1,P_2,P_5,P_6\},\quad &\vc(p_2,f)=\{P_3,P_4,P_7,P_8\},\\
\label{eq:point_split_2}
\vc(q_1,f)=\{P_1,P_2,P_7,P_8\},\quad &\vc(q_2,f)=\{P_3,P_4,P_5,P_6\}.
\end{align}
Firstly, $\vc(q_1)$ cannot share more than two points with any of the lines $\vc(l_1)$ and $\vc(l_2)$ due to the reasoning above. However, it may also not share more than two points with $\vc(p_i)$ ($i=1,2$) since $q_1$ and $p_i$ are both tangent to $f$ at these points. 
Otherwise, by B\'{e}zouts Theorem either $q_1\in\spn(p_i)$ which shows $\dim(\im_\C(\theta_1)+\im_\C(\theta_2))\le 4$ contradicting \cref{prop:steiner_graph_deg_no_rk_4}, or $q_1$ and $p_i$ are reducible and not multiples of each other. Again this leads to a contradiction as follows: assume wlog $i=1$, $q_1=h_1h_2$ and $p_1=h_1h_3$ for some $h_1,h_2,h_3\in A_1$. Then,
\begin{align*}
f&=h_1h_3p_2+(cq)^2,\\
f&=h_1h_2q_2+(dq)^2.
\end{align*}
Subtracting the two equations and plugging in $q=l_1l_2$, we get
\[
h_1(h_3p_2-h_2q_2)=(d^2-c^2)l_1^2l_2^2.
\]
From the beginning of the proof we know $d^2-c^2\neq 0$, hence $h_1\in\spn(l_1)$ or $h_1\in\spn(l_2)$. In either case, $f$ is reducible, a contradiction. This shows that the only way to split the points $P_1,\dots,P_8$ is the one given in (\ref{eq:point_split_1}) and (\ref{eq:point_split_2}).

Again, we use \cref{thm:afbg} and find $\lambda_1,\dots,\lambda_4\in\C$ such that
\[
p_1=\lambda_1l_1^2+\lambda_2q_1,\quad  p_2=\lambda_3l_1^2+\lambda_4q_2.
\]
However, this means
\[
\im_\C(\theta_1)+\im_\C(\theta_2)=\spn(p_1,p_2,q_1,q_2,q)=\spn(l_1^2,q_1,q_2,q),
\]
but by \cref{prop:steiner_graph_deg_no_rk_4}, this is impossible.

To finish the proof of \cref{thm:steiner_graph_non_deg}, we note that $\im(\theta_1)+\im(\theta_2)=\im(\theta_1+\theta_2)$ since $\theta_1,\theta_2$ are real psd Gram tensors. Therefore, any tensor $\lambda \theta_1+(1-\lambda)\theta_2$ ($\lambda\in [0,1]$) has image $\im(\theta_1)+\im(\theta_2)$ which has dimension 6.
\end{proof}

\begin{rem}
We briefly sketch a second possible proof. By slightly adjusting \cref{prop:steiner_graph_deg_no_rk_4}, we can show that $\dim(\im(\theta_1)+\im(\theta_2))=5$ if and only if there are certain linear relations between bitangents. In \cite[Theorem 6.1.9]{dolgachev2012} it is shown how to construct all bitangents of a smooth quartic by prescribing seven of them. Most importantly, the coefficients of all other bitangents can be computed by solving linear systems in the coefficients of the given bitangents.

Solving all linear equations symbolically, we get one polynomial equation in the given coefficients such that this equation has a solution if and only if $\dim(\im(\theta_1)+\im(\theta_2))=5$.
Using a computer, one can show that there are indeed no solutions corresponding to smooth quartics.

We note that this does not give any geometric insight but it does give a different proof.
\end{rem}

\section{The Gram spectrahedron}
\label{sec:the gram spectrahedron}


\subsection{Rank 5 Gram tensors}
A general $f$ has a rank 5 extreme point. Consider the sos map $\phi\colon\Rx_2^5\to\Sigma$. Let $V\subset\Rx_2^5$ be the (dense) subset of all tuples $(p_1,\dots,p_5)$ that are quadratically independent, i.e. such that the Gram map $\sy{\spn(p_1,\dots,p_5)}\to \Rx_4$ is injective. The fiber of any element in the image of $V$ has dimension at most 5.
We get $\dim \phi(V)=5\cdot 6 -\binom{5}{2}-\dim F$ where $F$ is the dimension of a generic fiber. We see that we need to have $\dim F=5$ and $\dim\phi(V)=15$ in order for the equation to hold.

\begin{rem}
This is in fact true for the \gs\ of any smooth psd ternary quartic.
\end{rem}

\begin{prop}
\label{prop:codim1square}
Let $U\subset A_2$ be a subspace of codimension 1. Then $\codim U^2\in\{0,2\}$ if $\vc(U)=\emptyset$ and $\codim U^2=3$ if $\vc(U)\neq\emptyset$.
\end{prop}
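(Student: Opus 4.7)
The plan is to pass to the multiplication map $\mu\colon \sy{U}\to A_4$ and use the identity $\codim U^2=\dim\ker(\mu|_{\sy{U}})$, which follows from $\dim \sy{U}=\binom{6}{2}=15=\dim A_4$. I would reduce the question to normal forms by exploiting the $\pgl_3(\C)$-action. Let $\ell\in A_2^\du$ span $U^\perp$ and introduce the symmetric bilinear form $\tilde\ell$ on $A_1$ defined by $\tilde\ell(L,M)=\ell(LM)$. Since the $\pgl_3(\C)$-orbits on nonzero $\ell\in A_2^\du$ are parametrized by $\rk\tilde\ell\in\{1,2,3\}$, the proof reduces to three normal-form computations.

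First I would identify which rank corresponds to which case. A point $P\in\P^2$ lies in $\vc(U)$ iff the evaluation functional $e_P\in A_2^\du$ is a scalar multiple of $\ell$. The bilinear form associated to $e_P$ is $(L,M)\mapsto L(P)M(P)$, which has rank $1$; conversely, if $\rk\tilde\ell=1$ then $\tilde\ell=\alpha\otimes\alpha$ for some $\alpha\in A_1^\du$, and by polarization $\ell$ agrees on $A_2$ with evaluation at the point corresponding to $\alpha$. Thus $\vc(U)\neq\emptyset$ iff $\rk\tilde\ell=1$, and in that case $\vc(U)=\{P\}$ is a single point (two distinct base points would impose two independent linear conditions and force $\dim U\le 4$), yielding $U=\{q\in A_2\colon q(P)=0\}$.

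In the case $\vc(U)\neq\emptyset$ I would use $\pgl_3(\C)$ to assume $P=[0:0:1]$, so $U=\spn(x^2,xy,y^2,xz,yz)$. The containment $U^2\subset\{g\in A_4\colon g \text{ vanishes to order }\ge 2\text{ at }P\}$ is immediate, and the latter space has codimension $3$ (the three linear conditions being that the coefficients of $z^4$, $xz^3$, $yz^3$ vanish). Enumerating the $15$ products of basis elements of $U$ shows the inclusion is an equality, so $\codim U^2=3$. In the case $\vc(U)=\emptyset$ I split into $\rk\tilde\ell=2$ and $\rk\tilde\ell=3$. For $\rk\tilde\ell=2$, after normalization $\ell$ becomes the ``coefficient of $xy$'' functional, $U=\spn(x^2,y^2,z^2,xz,yz)$, and a direct enumeration shows $U^2$ contains every quartic monomial except $x^3y$ and $xy^3$, so $\codim U^2=2$. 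For $\rk\tilde\ell=3$, normalizing $\tilde\ell$ to the standard non-degenerate form makes $\ell$ the trace functional, $U=\spn(x^2-y^2,y^2-z^2,xy,xz,yz)$; enumerating products and using the six monomial-valued ones (such as $(xy)^2=x^2y^2$, $(xz)^2=x^2z^2$, etc.) to pivot and reduce the remaining binomial-valued products shows every quartic monomial lies in $U^2$, so $\codim U^2=0$. Either way $\codim U^2\in\{0,2\}$.

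The main obstacle is the final row-reduction of the rank-$3$ subcase: one must verify that the $15$ products in the rank-$3$ normal form span all of $A_4$, which requires carefully chasing a chain of pivot eliminations on a $15\times 15$ matrix. The computation is elementary but I do not see a purely structural substitute; the delicate point is that in this subcase the generic $6$ Koszul-type syzygies in $\ker\mu\subset\sy{A_2}$ must all meet $\sy{U}$ trivially, whereas in the rank-$2$ subcase exactly two of them survive the restriction.
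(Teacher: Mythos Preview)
Your proof is correct and follows essentially the same approach as the paper: classify $U$ by the rank of the quadratic form spanning $U^\perp$ (which you phrase dually via the bilinear form $\tilde\ell$ on $A_1$), normalize to one of three cases under $\pgl_3(\C)$, and compute $U^2$ explicitly in each. The paper's version is terser---it simply lists the normal forms $x^2$, $x^2+y^2$, $x^2+y^2+z^2$ for $q\in A_2$ and leaves the verification to the reader---but the strategy and the case split are identical.
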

\begin{proof}
Let $\spn(q)=U^\perp$ with $q\in A_2$. After a \cc\ $q$ is either $x^2$, $x^2+y^2$ or $x^2+y^2+z^2$.
For all three cases we check the claim immediately. Note that the rank of $q$ is 1 iff $\vc(U)\neq\emptyset$.
\end{proof}

\begin{rem}
A similar statement holds in any number of variables. If the rank of the quadratic form $q$ is greater or equal to 3, then $\codim U^2=0$. If the rank is 2, we also get $\codim U^2=2$. And the rank being 1 is equivalent to $\vc(U)\neq\emptyset$ and to $\codim U^2=n$.
\end{rem}

\begin{cor}
\label{cor:possiblefaces}
Let $f\in\Sigma$ be smooth. Let $\theta\in\gram(f)$ be a rank 5 Gram tensor. Then the supporting face of $\theta$ has dimension 0 or 2.
\end{cor}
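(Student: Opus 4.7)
The plan is to combine the dimension formula of \cref{thm:dim_faces} with the classification in \cref{prop:codim1square}, and then use smoothness of $f$ to rule out the one remaining case.

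Let $U:=\im(\theta)\subset\Rx_2$; since $\theta$ has rank $5$, $U$ has codimension $1$. Writing $F$ for the supporting face of $\theta$ (so that $\theta$ is a relative interior point of $F$), \cref{thm:dim_faces} gives
\[
\dim F=\binom{6}{2}-\dim U^2=15-\dim U^2.
\]
By \cref{prop:codim1square}, $\codim U^2\in\{0,2\}$ if $\vc(U)=\emptyset$ and $\codim U^2=3$ if $\vc(U)\neq\emptyset$, so $\dim F\in\{0,2,3\}$. It therefore suffices to show that $U$ has no (complex) base point.

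For this I would diagonalise $\theta$: since $\theta$ is real and psd of rank $5$, there is a real basis $p_1,\dots,p_5$ of $U$ with $f=\sum_{i=1}^5 p_i^2$. If some $P\in\P^2_\C$ were a common zero of $U$, then each $p_i(P)=0$, whence
\[
f(P)=\sum_{i=1}^5 p_i(P)^2=0\quad\text{and}\quad \nabla f(P)=2\sum_{i=1}^5 p_i(P)\,\nabla p_i(P)=0,
\]
so $P$ would be a singular point of $\vc(f)\subset\P^2_\C$, contradicting smoothness of $f$. Hence $\vc(U)=\emptyset$ and $\dim F\in\{0,2\}$.

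No substantive obstacle arises: the dimension bookkeeping is essentially packaged into \cref{prop:codim1square}, and the only point worth watching is that the base locus in that proposition is taken over $\C$, which matches up cleanly with smoothness of $f$ (also a statement over $\C$). The slicker alternative of invoking Euler's identity to get $f(P)=0$ from $\nabla f(P)=0$ is not even needed here, since $f(P)=0$ already follows from $p_i(P)=0$.
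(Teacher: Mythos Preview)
Your proof is correct and follows essentially the same approach as the paper: apply the dimension formula from \cref{thm:dim_faces} together with \cref{prop:codim1square}, and use smoothness to exclude the base-point case. The paper's proof is terser and leaves the argument that $\vc(U)=\emptyset$ implicit, whereas you spell it out explicitly via the singular-point computation.
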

\begin{proof}
Let $U:=\im(\theta)$ and let $F$ be the supporting face of $\theta$. Then 
\[
\dim F=\binom{6}{2}-\dim U^2\in\{0,2\}.
\]
\end{proof}

\begin{rem}
The shape of a 2-dimensional face of rank 5 is usually an oval with a smooth boundary. However, faces may also degenerate and have singularities and line segments on their boundaries as shown in \cref{fig:fermat2}.

The face on the left-hand side has a smooth and irreducible boundary and represents a 'general' face.
The face on the right-hand side is also a 2-dimensional face of rank 5. Hence, interior points have rank 5 and points on the boundary are of rank 4 or lower. Here, we can find three singular points on the boundary which are rank 3 extreme points. The rest of the boundary consists of two 1-dimensional faces of rank 4 and a \sa\ set of dimension 1 containing rank 4 extreme points.

\begin{figure}[h]
\centering
\begin{minipage}{.5\textwidth}
  \centering
  \includegraphics[scale=0.35]{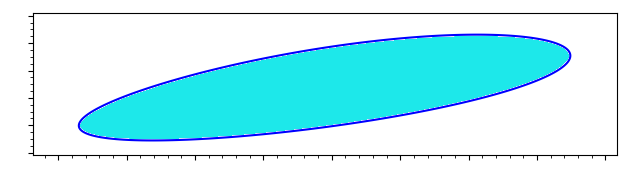}
\end{minipage}%
\begin{minipage}{.5\textwidth}
  \centering
  \includegraphics[scale=0.35]{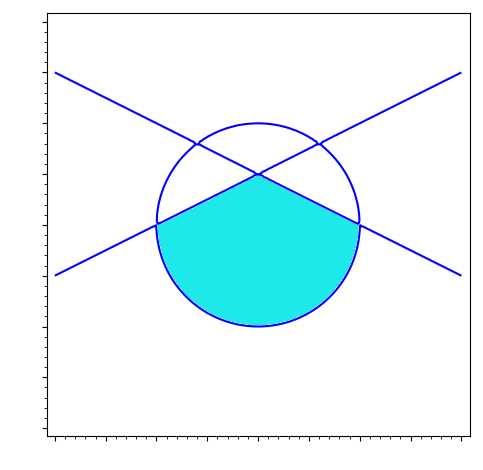}
\end{minipage}
\caption{The face on the left-hand side has a smooth boundary whereas the face on the right-hand side has three singular points and two line segments as part of its boundary.}
\label{fig:fermat2}
\end{figure}

\end{rem}

\subsection{Rank 4 Gram tensors}

We show that for a generic quartic there are no faces of rank $4$ and dimension $1$. Those faces only appear on \gsa\ of quartics that are invariant under some automorphism of $\C^3$ of order $2$. 
Moreover, the dimension of the set of all rank $4$ extreme points is $3$ for a generic quartic.

We start by considering faces of rank 4 and dimension 1. 
Since the boundary of any rank 4 face consists of tensors of rank at most 3, it follows that if the quartic is smooth, the rank 4 faces can only occur as line segments between the eight rank 3 tensors. Since we know the Steiner graph, there are at most 12 1-dimensional faces on the \gs.

\begin{dfn}
Let $f\in A_4$ and $C=\vc(f)\subset\P^2$. We denote by $\Aut(f)$ (or $\Aut(C)$) the \todfn{automorphism group} of $f$ (or $C$) embedded in $\pgl_3(\C)$.
\end{dfn}

\begin{thm}
\label{thm:1dimface}
Let $f\in\Sigma$ be a smooth quartic, then the following are equivalent.
\begin{enumerate}
\item The automorphism group of $f$ has even order.
\item There exist four different bitangents of $f$ that intersect in a common point.
\item After a linear change of coordinates $f=(z^2-f_2)^2-4xy(ax+by)(cx+dy)$ with $f_2\in\C[x,y]_2$ and $a,b,c,d\in\C$.
\item There are three pairwise syzygetic Steiner complexes $\mS_1,\mS_2,\mS_3$ (and corresponding Gram tensors $\theta_1,\theta_2,\theta_3$) and four different bitangents $l_1,\dots,l_4$ such that 
\begin{enumerate}
\item $\{l_1,l_2\},\,\{l_3,l_4\}\in\mS_1$,
\item $\{l_1,l_3\},\,\{l_2,l_4\}\in\mS_2$,
\item $\{l_1,l_4\},\,\{l_2,l_3\}\in\mS_3$,
\end{enumerate}
and $\dim(\im_\C(\theta_i)+\im_\C(\theta_j))\le 4$ for all $i,\,j=1,2,3$.
\end{enumerate}
\end{thm}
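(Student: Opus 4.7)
The plan is to establish the equivalences by reducing the other three conditions to the canonical normal form (iii), via (i)$\iff$(iii), (ii)$\iff$(iii), and (ii)$\iff$(iv).

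For (i)$\iff$(iii): the implication from (iii) is immediate because $[x:y:z]\mapsto[x:y:-z]$ is then a nontrivial order-two element of $\Aut(f)$. Conversely, I would lift any $2$-torsion element $\sigma\in\Aut(f)$ to $\gl_3(\C)$ with $\sigma^2=I$ and diagonalize it to $\diag(1,1,-1)$; invariance makes $f$ a polynomial in $x,y,z^2$, so completing the square writes $f=(z^2-f_2)^2-g_4$ for some $g_4\in\C[x,y]_4$ (nonzero by smoothness), and factoring $g_4$ into four linear forms followed by a change of coordinates in $x,y$ matches the shape in (iii). The implication (iii)$\Rightarrow$(ii) is a direct reading: $x,y,ax+by,cx+dy$ are all bitangents of the form in (iii), and all four vanish at $[0:0:1]$.

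The bulk of the work is (ii)$\Rightarrow$(iii). Placing the common point at $[0:0:1]$ and normalizing the four concurrent bitangents to $l_1=x,\,l_2=y,\,l_3=ax+by,\,l_4=cx+dy\in\C[x,y]_1$, \cref{lem:azy_bit_no_intersection} makes any three of them syzygetic, so the $4$-tuple is syzygetic and by \cref{thm:equivalences_sc} determines three QDRs giving $f=\lambda_i\, l_1 l_2 l_3 l_4 + q_i^2$ for $i=1,2,3$. Subtracting yields $q_i^2-q_j^2=(\lambda_j-\lambda_i)\, l_1 l_2 l_3 l_4$; factoring in the UFD $\C[x,y,z]$ and invoking both $\rk\theta_i=3$ and $f\notin\C[x,y]_4$ (otherwise $[0:0:1]$ would be a singular point of $\vc(f)$, contradicting smoothness) forces $\lambda_1=\lambda_2=\lambda_3=:\lambda$ and $q_1=\pm q_2=\pm q_3=:q$. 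Since $f$ has nonzero $z^4$-coefficient, $q$ has a nonzero $z^2$-coefficient; after rescaling, $q=z^2+z\ell+r$ with $\ell\in\C[x,y]_1$, $r\in\C[x,y]_2$, and the linear change $z\mapsto z-\ell/2$ preserves $[0:0:1]$ and each $l_i$ while eliminating the $z\ell$-term, giving $q=z^2-f_2$. The scalar $\lambda$ is absorbed by redistributing it between the two factors $(ax+by)$ and $(cx+dy)$, yielding the shape in (iii) with the coefficient $-4$.

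For (ii)$\iff$(iv): given (ii), concurrency yields three pairwise syzygetic Steiner complexes with the stated pair structure by the remarks after \cref{lem:azygetic_if_in_sc}, and the dimension bound is read off (iii) since each $\im_\C(\theta_i)\subset\C[x,y]_2\oplus\C\cdot(z^2-f_2)$, a $4$-dimensional subspace. Conversely, from $\dim(\im_\C(\theta_i)+\im_\C(\theta_j))\le 4$ and each image being $3$-dimensional, $\dim(\im_\C(\theta_i)\cap\im_\C(\theta_j))\ge 2$; picking two independent common elements and eliminating the ``third'' basis elements $q_i,q_j$ of each image yields a nontrivial relation $\alpha l_1 l_2+\beta l_3 l_4+\gamma l_1 l_3+\delta l_2 l_4=0$ in $A_2$ (using all three pairs $(i,j)$ to close the argument if the first elimination leaves the $q_i$'s in the span of the bitangent products). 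Rewriting this as $l_1(\alpha l_2+\gamma l_3)=-l_4(\beta l_3+\delta l_2)$ and applying the UFD property of $\C[x,y,z]$ together with the pairwise non-proportionality of distinct bitangents forces $l_1,l_4\in\spn(l_2,l_3)\subset A_1$, so all four bitangents share the unique common zero of $\{l_2=l_3=0\}$.

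The main obstacle is the precise identification of the scalars $\lambda_i$ and the quadrics $q_i$ in (ii)$\Rightarrow$(iii): a priori the three pairs $(\lambda_i,q_i)$ are unrelated, and ruling out mixed factorizations of $q_i^2-q_j^2$ requires repeated use of the rank-$3$ assumption on each $\theta_i$ together with the smoothness of $f$. An analogous subtlety arises in (iv)$\Rightarrow$(ii) when the elimination between a single pair of images fails to produce a relation purely in the bitangent products, and one has to bring in the third Steiner complex to force the required linear dependence.
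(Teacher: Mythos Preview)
Your overall scheme matches the paper's. The implications (i)$\iff$(iii) and (iii)$\Rightarrow$(ii) are essentially the paper's arguments. Your route (ii)$\Rightarrow$(iv) via the normal form (all three images land in the $4$-dimensional space $\C[x,y]_2\oplus\C\cdot(z^2-f_2)$) is a valid alternative to the paper's explicit linear-dependence computation, though it relies on your three-QDR comparison to know that the same $q=z^2-f_2$ lies in every $\im_\C(\theta_i)$. For (ii)$\Rightarrow$(iii) itself, however, that comparison is unnecessary: the paper simply takes \emph{one} representation $f=q^2+\lambda\,l_1l_2l_3l_4$, writes $q=z^2+zf_1+f_2$, and removes the $zf_1$-term by $z\mapsto z-\tfrac12 f_1$. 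The ``main obstacle'' you flag there is self-imposed.

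The genuine gap is in (iv)$\Rightarrow$(ii). From $\dim(\im_\C\theta_1+\im_\C\theta_2)\le 4$ you only get two relations among the six generators $l_1l_2,\,l_3l_4,\,l_1l_3,\,l_2l_4,\,q_1,\,q_2$; there is no reason you can eliminate both $q_1$ and $q_2$ to obtain a relation purely among the bitangent products, and ``bring in the third Steiner complex'' is not an argument as it stands. The paper avoids this by observing that the \emph{same} quadric sits in both images: with $q$ a fixed solution of $f=q^2+\lambda\,l_1l_2l_3l_4$ (\cref{thm:equivalences_sc}), one has $\im_\C(\theta_1)=\spn(l_1l_2,l_3l_4,q)$ and $\im_\C(\theta_2)=\spn(l_1l_3,l_2l_4,q)$. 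The sum then has only five generators, hence a nontrivial relation $\lambda_1 l_1l_2+\lambda_2 l_3l_4+\lambda_3 l_1l_3+\lambda_4 l_2l_4+\lambda_5 q=0$. The coefficient $\lambda_5$ must vanish: otherwise $q$ lies in the span of the four bitangent products, and then $f=q^2+\lambda\,l_1l_2l_3l_4$ together with all its partial derivatives vanishes at $\vc(l_1)\cap\vc(l_4)$, contradicting smoothness. With $\lambda_5=0$ one arrives at exactly your factorisation $l_1(\lambda_1 l_2+\lambda_3 l_3)=-l_4(\lambda_2 l_3+\lambda_4 l_2)$, and concurrency follows.
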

\begin{proof}
(i)$\Rightarrow$(ii),(iii): After a linear change of coordinates, the automorphism operates as $(a,b,c)\mapsto (a,b,-c)$ on $\C^3$ and therefore no odd powers of $z$ appear in $f$. Hence, we can write $f$ as
\[
z^4+z^2f_2(x,y)+f_4(x,y)=(z^2+g_2(x,y))^2+g_4(x,y)
\]
where $f_i,g_i\in\C[x,y,z]_i$ for $i=2,4$. Since $g_4$ is a form in 2 variables, it is a product of four linear forms. By definition, those are bitangents of $f$ and intersect in the point $(0:0:1)$.

(ii)$\Rightarrow$(iii): After scaling $f$, we may assume that $f$ is monic in $z$. As $f$ is smooth, the monomial $z^4$ appears in $f$. Wlog the common intersection point of the bitangents is $(0\colon 0\colon 1)$, then the four bitangents $l_1,\dots,l_4$ lie in $\C[x,y]_1$. As four bitangents can only intersect in a common point if they are syzygetic by \cref{lem:azy_bit_no_intersection}, there exists a Steiner complex containing the pairs $\{l_1,l_2\},\{l_3,l_4\}$. Hence, there exists $q\in A_2$, $\lambda\in\C$ such that
\[
f=q^2+\lambda l_1l_2l_3l_4.
\]
We write $q=z^2+zf_1(x,y)+f_2(x,y)$ for some $f_i\in\C[x,y]_i$ ($i=1,2$) and $f_1=ax+by$ for some $a,b\in\C$. Then, the coordinate change $z\mapsto -\frac{a}{2}x-\frac{b}{2}y+z$ yields the desired form in (iii).

(iii)$\Rightarrow$(i): The form $g:=(z^2-f_2)^2-4xy(ax+by)(cx+dy)$ with $a,b,c,d\in\C$ is invariant under the action of the automorphism $\sigma$ defined by $(x,y,z)\mapsto (x,y,-z)$. Let $\phi$ be the automorphism of $\C^3$ mapping $f$ to $g$. Since $g$ is invariant under $\sigma$, the form $f$ is invariant under $\phi^{-1}\sigma\phi$.

(ii)$\Rightarrow$(iv): Let $l_1,l_2,l_3,l_4$ be the four bitangents intersecting in a common point, and let $\mS_1,\mS_2,\mS_3$ be the Steiner complexes that contain $\{l_1,l_2\},\{l_3,l_4\}$ and\\ $\{l_1,l_3\},\{l_2,l_4\}$ and $\{l_1,l_4\},\{l_2,l_3\}$ respectively. Let $\theta_1,\theta_2,\theta_3$ be the corresponding Gram tensors.
 
We prove the statement for $\theta_1,\theta_2$, the other ones work analogously.

Let $q\in A_2$ be the quadratic form such that $f=q^2+\lambda l_1l_2l_3l_4$ for some $\lambda\in\C$. Then, we have $\im_\C(\theta_1)=\spn(l_1l_2,l_3l_4,q)$, $\im_\C(\theta_2)=\spn(l_1l_3,l_2l_4,q)$, and
\[
\im_\C(\theta_1)+\im_\C(\theta_2)=\spn(l_1l_2,l_3l_4,l_1l_3,l_2l_4,q).
\]
By assumption, there are non-zero scalars $\lambda_1,\dots,\lambda_4\in \C$ such that
\[
l_1=\lambda_1 l_2+\lambda_2 l_3\quad \text{ and }\quad l_4=\lambda_3 l_2 +\lambda_4 l_3.
\]
This gives
\begin{align*}
0&=l_1l_4-l_1l_4\\
&=l_1(\lambda_3 l_2 +\lambda_4 l_3)-l_4(\lambda_1 l_2+\lambda_2 l_3)\\
&=-\lambda_1 l_2l_4-\lambda_2 l_3l_4+\lambda_3 l_1l_2+\lambda_4 l_1l_3.
\end{align*}
This shows that the products $l_1l_2,l_3l_4,l_1l_3,l_2l_4$ are linearly dependent and thus 
\[
\dim\spn(l_1l_2,l_3l_4,l_1l_3,l_2l_4,q)\leq 4.
\]

(iv)$\Rightarrow$(ii): Again, consider the Steiner complexes $\mS_1$ and $\mS_2$, the bitangents, and the quadratic form $q$ as above. Then, we know
\[
\im_\C(\theta_1)=\spn(l_1l_2,l_3l_4,q)\quad \text{and}\quad \im_\C(\theta_2)=\spn(l_1l_3,l_2l_4,q).
\]
By assumption, there exist $\lambda_1,\dots,\lambda_5\in\C$ not all zero such that
\[
0=\lambda_1 l_1l_2+\lambda_2 l_3l_4+\lambda_3 l_1l_3+\lambda_4 l_2l_4+\lambda_5 q.
\]
We claim that $\lambda_5=0$. Assume otherwise, then
\[
q=-\frac{1}{\lambda_5}(\lambda_1 l_1l_2+\lambda_2 l_3l_4+\lambda_3 l_1l_3+\lambda_4 l_2l_4).
\]
This gives 
\[
f=(\frac{1}{\lambda_5}(\lambda_1 l_1l_2+\lambda_2 l_3l_4+\lambda_3 l_1l_3+\lambda_4 l_2l_4))^2+\lambda l_1l_2l_3l_4.
\]
But the right-hand side and its derivatives vanish at the intersection point of $l_1$ and $l_4$ which means that this point is a singular point of $f$.

Hence, we get the equation
\[
\lambda_1 l_1l_2+\lambda_3 l_1l_3=-\lambda_2 l_3l_4-\lambda_4 l_2l_4,
\]
or equivalently
\[
l_1(\lambda_1 l_2+\lambda_3 l_3)=l_4(-\lambda_2 l_3-\lambda_4 l_2).
\]
Since any two bitangents are no scalar multiple of each other, there exist $0\neq\nu_1,\nu_2\in\C$ such that
\[
l_1=\nu_1(-\lambda_2 l_3-\lambda_4 l_2)\quad \text{ and }\quad l_4=\nu_2(\lambda_1 l_2+\lambda_3 l_3).
\]
This means that all four bitangents intersect in a common point, namely $\vc(l_2,l_3)$.
\end{proof}

\begin{rem}
\label{rem:auto_subspace}
In the situation of \cref{thm:1dimface}, assume that $f$ has an automorphism $\sigma$ of order 2. Let $l_1,\dots,l_4$ be the four bitangents constructed in (ii). We see that $\sigma$ fixes the 2-dimensional subspace $\spn(l_1,\dots,l_4)\subset A_1$. As $\dim A_1=3$ and $\sigma$ has order 2, this defines $\sigma$.

Therefore, the four bitangents are uniquely defined by $\sigma$. Indeed, \cref{lem:most4bitangents} shows that there is no other bitangent of $f$ contained in $\spn(l_1,\dots,l_4)$ and therefore no other bitangent is fixed by $\sigma$.
\end{rem}

\begin{lem}
\label{lem:most4bitangents}
Let $f\in\Rx_4$ be a smooth quartic. There can be at most four bitangents of $f$ intersecting in a common point.
\end{lem}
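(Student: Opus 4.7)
The plan is to argue by contradiction. Suppose $l_1,\dots,l_5$ are five distinct bitangents of $f$ meeting at a common point $P\in\P^2$, and apply \cref{thm:1dimface} to $l_1,l_2,l_3,l_4$: after a linear change of coordinates over $\C$ we may assume $P=(0{:}0{:}1)$, that
$$
f = (z^2-f_2)^2 - 4xy(ax+by)(cx+dy)
$$
for some $f_2\in\C[x,y]_2$ and $a,b,c,d\in\C$, and that $\{l_1,\dots,l_4\}=\{x,y,ax+by,cx+dy\}$ up to nonzero scalars.

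Since $l_5$ also passes through $(0{:}0{:}1)$, it lies in $\C[x,y]_1$, so $l_5=\alpha x+\beta y$ for some $(\alpha,\beta)\neq(0,0)$. I would then parametrize $\vc(l_5)\subset\P^2$ by $(s{:}u)\mapsto(\beta s:-\alpha s:u)$ and compute
$$
f(\beta s,-\alpha s,u) = (u^2-\gamma s^2)^2 + 4K s^4,
$$
where $\gamma\in\C$ is determined by $f_2(\beta s,-\alpha s)=\gamma s^2$ and $K:=\alpha\beta(a\beta-b\alpha)(c\beta-d\alpha)$. The bitangent condition on $l_5$ is equivalent to this binary quartic having all zeros of even multiplicity, which over $\C$ means it is the square of a binary quadratic $R(s,u)=Au^2+Bus+Cs^2$. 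The vanishing of the $u^3s$ and $us^3$ coefficients in the quartic forces $B=0$, and then matching the coefficients of $u^2s^2$ and $s^4$ successively gives $AC=-\gamma$ and $C^2=\gamma^2+4K$, hence $K=0$.

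But $K=\alpha\beta(a\beta-b\alpha)(c\beta-d\alpha)=0$ means $l_5$ is a scalar multiple of one of $x$, $y$, $ax+by$, $cx+dy$, i.e.\ $l_5\in\{l_1,\dots,l_4\}$, contradicting the assumption. The whole argument is a routine computation once the normal form of \cref{thm:1dimface} is in place; the main obstacle is simply unpacking that normal form and verifying that the restricted quartic indeed takes the very special shape $(u^2-\gamma s^2)^2+4Ks^4$, since it is this paucity of monomials that prevents any further perfect-square factorization.
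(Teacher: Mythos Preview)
Your argument is correct, but it takes a different route from the paper's. The paper argues purely combinatorially: if five bitangents $l_1,\dots,l_5$ meet at a point, then $l_1,\dots,l_4$ must be syzygetic (via \cref{lem:azy_bit_no_intersection}), and the three pairwise syzygetic Steiner complexes $\mS_1,\mS_2,\mS_3$ they determine together contain all $28$ bitangents by \cref{prop:intersection_sc}; whichever one contains $l_5$ (say $\mS_1$, with $\{l_1,l_2\},\{l_3,l_4\}\in\mS_1$) then makes $l_1,l_3,l_5$ azygetic by \cref{lem:azygetic_if_in_sc}, hence non-concurrent, a contradiction. Your approach instead passes through the normal form of \cref{thm:1dimface} and reduces the question to an explicit coefficient match on the restricted binary quartic. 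This is more hands-on and avoids any further Steiner-complex bookkeeping, at the cost of relying on the heavier \cref{thm:1dimface} (whose proof of (ii)$\Rightarrow$(iii) itself invokes \cref{lem:azy_bit_no_intersection}); the paper's route is shorter and stays within the combinatorial framework already in place. Two small remarks on your write-up: first, \cref{thm:1dimface} is stated only for $f\in\Sigma$, whereas \cref{lem:most4bitangents} is for arbitrary smooth $f\in\Rx_4$ --- you should note that the implication (ii)$\Rightarrow$(iii) you invoke does not actually use the psd hypothesis. Second, deducing $K=0$ from $AC=-\gamma$ and $C^2=\gamma^2+4K$ also requires $A^2=1$ from the $u^4$-coefficient, which you suppress.
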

\begin{proof}
Assume there are five bitangents $l_1,\dots,l_5$ of $f$ intersecting in a common point. Since no three azygetic bitangents can intersect in a common point by \cref{lem:azy_bit_no_intersection}, the bitangents $l_1,\dots,l_4$ are syzygetic. Let $\mS_1,\mS_2,\mS_3$ be the three syzygetic Steiner complexes containing the bitangents $l_1,\dots,l_4$ in the three different ways. Since any two of them only have these four bitangents in common (\cref{prop:intersection_sc}), the three Steiner complexes contain $3\cdot 12-8=28$ bitangents in total. Hence, one of them contains $l_5$. Assume this is $\mS_1$ and $\{l_1,l_2\},\{l_3,l_4\}\in\mS_1$. Then, the three bitangents $l_1,l_3,l_5$ are azygetic by \cref{lem:azygetic_if_in_sc}, hence cannot intersect in a common point, especially $l_1,\dots,l_5$ do not intersect in a common point.
\end{proof}

\begin{rem}
The \sa\ set of all $f\in\Rx_4$ such that $\Aut(f)$ contains an element of order 2 has codimension 2 in $\Rx_4$.
Especially, a generic form $f\in\Rx_4$ does not have an automorphism of order 2.
\end{rem}

\begin{cor}
\label{cor:one_dim_face_automorphism}
Let $f\in\Sigma$ be a smooth quartic.
\begin{enumerate}
\item If the \gs\ of $f$ has a face of dimension 1, then $f$ has an automorphism of order 2.
\item If $f$ has an automorphism of order 2, there are three rank 3 Gram tensors $\theta_1,\theta_2,\theta_3$ such that $\dim\,(\im_\C(\theta_i)+\im_\C(\theta_j))\le 4$ for all $i,j=1,2,3$. If two of those three rank 3 Gram tensors are real and psd, the \gs\ has a face of dimension 1.
\end{enumerate}
Especially, if $f\in\Sigma$ is chosen generically, its \gs\ has no faces of rank $4$ and dimension $1$.
\end{cor}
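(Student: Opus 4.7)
The plan is to combine the Steiner graph result (\cref{thm:steiner_graph_non_deg}) with the equivalences in \cref{thm:1dimface}, using the observation (just before \cref{thm:1dimface}) that for smooth $f$ the boundary of any rank $4$ face consists of rank $3$ tensors, so a rank $4$ face must lie between two of the eight real psd rank $3$ Gram tensors. Combined with \cref{cor:possiblefaces}, this pins down the shape of any $1$-dimensional face: its relative interior has rank $4$ and its two endpoints are rank $3$ extreme points.

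For (i), let $F\subset\gram(f)$ have dimension $1$, pick a relative interior point $\theta$, and set $r=\rk\theta$, $U=\im(\theta)$. By \cref{cor:possiblefaces} we cannot have $r=5$, and $r\le 3$ is excluded because each of the eight real psd rank $3$ Gram tensors is an extreme point; therefore $r=4$. The two endpoints $\theta_1,\theta_2$ of $F$ are extreme points of rank $<4$, hence real psd rank $3$ Gram tensors, and their Steiner complexes $\mS_1,\mS_2$ are connected in the Steiner graph. By \cref{thm:steiner_graph_non_deg} and \cref{lem:steiner_complexes_to_subsets} they are syzygetic, and the rank-$4$ condition on the interior of $F$ gives $\dim(\im_\C(\theta_1)+\im_\C(\theta_2))\le 4$. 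Inspecting (iv)$\To$(ii) in the proof of \cref{thm:1dimface}, this single pair already suffices to produce four different bitangents meeting in a common point, and then (ii)$\To$(i) of that same theorem yields an automorphism of order $2$.

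For (ii), if $f$ has an order $2$ automorphism, then (i)$\To$(iv) of \cref{thm:1dimface} supplies three pairwise syzygetic Steiner complexes whose Gram tensors $\theta_1,\theta_2,\theta_3$ satisfy $\dim(\im_\C(\theta_i)+\im_\C(\theta_j))\le 4$. Suppose $\theta_1,\theta_2$ are real and psd. For every $\lambda\in(0,1)$ the tensor $\lambda\theta_1+(1-\lambda)\theta_2\in\gram(f)$ has image $\im(\theta_1)+\im(\theta_2)$, of dimension exactly $4$ (at least $4$ since the two distinct rank-$3$ images intersect in dimension at most $2$, at most $4$ by hypothesis). Hence the open segment consists of rank $4$ tensors on the boundary of $\gram(f)$ and is contained in a face $F$ of rank $4$ with $\dim F\ge 1$. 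Since the boundary of any rank $4$ face sits inside the finite set of rank $3$ extreme points, $F$ cannot have dimension $\ge 2$, so $\dim F=1$. The ``especially'' follows from (i) together with the remark that the locus of $f\in\Rx_4$ with an order $2$ automorphism has codimension $2$.

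The main obstacle is verifying that (iv)$\To$(ii) of \cref{thm:1dimface} really can be driven from a single pair of syzygetic Steiner complexes (with the dimension bound) rather than all three; a direct inspection of the argument given there shows that it uses only one such pair, so no extra input is needed. A secondary subtlety is ruling out that the face through the segment in (ii) is of dimension $\ge 2$, which is handled by the rank-$3$-boundary observation recalled above.
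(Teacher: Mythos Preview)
Your proof is correct and follows essentially the same approach as the paper: both parts are deduced from \cref{thm:1dimface}, together with the observation that a one-dimensional face of $\gram(f)$ for smooth $f$ must have rank $4$ with rank-$3$ endpoints. Your write-up is more explicit than the paper's (which simply says ``it follows from \cref{thm:1dimface}'' and ``immediately follows from \cref{thm:1dimface}''), and in particular your remark that the implication (iv)$\Rightarrow$(ii) in \cref{thm:1dimface} only uses a single syzygetic pair, and your argument ruling out $\dim F\ge 2$ via finiteness of rank-$3$ tensors, fill in details the paper leaves to the reader.
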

\begin{proof}
(i): If $\gram(f)$ has a face $F$ of dimension 1 and rank 4, the boundary of $F$ consists of two Gram tensors $\theta_1,\theta_2$ of rank 3, since $f$ is smooth.

As $\dim\,(\im\theta_1+\im\theta_2)=\dim\im(\theta_1+\theta_2)=4$, it follows from \cref{thm:1dimface} that there exists an automorphism of $f$ of order 2.

(ii) immediately follows from \cref{thm:1dimface}.
\end{proof}

Next, we give an example of a psd ternary quartic with an automorphism of order 2 such that there is no 1-dimensional face on $\gram(f)$.

\begin{rem}
Consider the following family of quartics 
\[
f_{\alpha,\beta}=(z^2+\alpha x^2+\beta y^2)^2+\prod\limits_{j=1}^4 (jx+y)
\]
and $\alpha,\beta\in \R$. Then $f:=f_{\alpha,\beta}$ is psd and smooth if $\alpha,\beta$ are chosen large enough and generic.
Moreover, since $\alpha,\beta$ are chosen generically, this quartic has an automorphism group isomorphic to $C_2$ by \cite[Theorem 6.5.2.]{dolgachev2012}. 

The lines $\vc(jx+y)\,\, (j=1,\dots,4)$ are bitangents of $f$ and intersect in a common point. Let $\mS_1,\mS_2,\mS_3$ be the three syzygetic Steiner complexes that contain these four bitangents as in \cref{thm:1dimface}. Let $\theta_1,\theta_2,\theta_3$ be the corresponding Gram tensors. It follows from \cref{thm:1dimface} that $\dim(\im_\C(\theta_i)+\im_\C(\theta_j))\le 4$ for all $i,j=1,2,3$.

However, since all four of these bitangents are real, it follows from \cite[Proposition 6.6.]{psv2011} that the Gram tensors $\theta_1,\theta_2,\theta_3$ are not real psd.
Especially, this automorphism of order 2 does not give rise to a face of dimension 1 on the \gs.

In fact, there is no 1-dimensional face on $\gram(f)$ at all. Assume there are another four bitangents $l_1,\dots,l_4$ intersecting in a common point. By \cref{lem:most4bitangents} they intersect in a different point. 
From  \cref{rem:auto_subspace} we know that the non-trivial automorphism fixes the 2-dimensional subspace of $\C[x,y,z]_1$ spanned by the four bitangents. This shows that the automorphism cannot fix the first 4 bitangents because then it would fix the whole space. Hence, there are two different automorphisms of order 2, contradicting the fact that $\Aut(f)\cong C_2$.

This shows that although four bitangents are intersecting in a common point, there is no 1-dimensional face on the \gs.
\end{rem}

We have already seen that 1-dimensional faces can only appear as line segments between two rank 3 tensors. However, not all faces represented in the Steiner graph of a smooth quartic can be of rank 4 at the same time.

\begin{prop}
\label{prop:six_one_dim_faces}
Let $f\in\Sigma$ be smooth. There can be at most six 1-dimensional faces on $\gram(f)$.
\end{prop}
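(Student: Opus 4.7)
My plan is to refine the enumeration of the $12$ edges of the Steiner graph (\cref{thm:steiner_graph_non_deg}) and show that at most half of them can actually be $1$-dimensional faces. I will fix a real \ldr\ and a Cayley octad so that complex conjugation acts by the four swaps $P_1=\{1,2\},\,P_2=\{3,4\},\,P_3=\{5,6\},\,P_4=\{7,8\}$. By \cref{thm:psd_gram_tensors} the eight psd rank-$3$ Steiner complexes are then indexed by the \emph{balanced} $4$-subsets $I\subset\{1,\dots,8\}$ (those meeting every $P_k$ in exactly one element). Combining \cref{cor:one_dim_face_automorphism} with \cref{thm:1dimface}, every $1$-dim face is a segment between two such tensors $\theta_{I_1},\theta_{I_2}$, and from the proof of \cref{lem:steiner_complexes_to_subsets} the four bitangents common to $\mS_{I_1}$ and $\mS_{I_2}$ are
\[
\mathcal{B}(I_1,I_2)=\bigl\{b_{I_1\cap I_2},\,b_{I_1\setminus I_2},\,b_{I_2\setminus I_1},\,b_{(I_1\cup I_2)^c}\bigr\};
\]
this segment is a $1$-dim face iff the four lines in $\mathcal{B}(I_1,I_2)$ meet in a common point of $\P^2$.

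Next, I will attach to each edge of the Steiner graph its \emph{third Steiner complex} $\mS_{I_3}$, the unique Steiner complex syzygetic with both $\mS_{I_1}$ and $\mS_{I_2}$ (equivalently, the one obtained from the remaining pairing of $\mathcal{B}(I_1,I_2)$). A short calculation gives $I_3^c=I_1\triangle I_2$, and since $I_1,I_2$ are balanced, their symmetric difference is the union of two entire pairs $P_k\cup P_l$. Hence $\mS_{I_3}$ is always one of the three real non-psd complexes $\mS_{1234},\,\mS_{1256},\,\mS_{1278}$, and a direct count shows that each of them is the third complex of exactly four Steiner-graph edges, two sitting in each $K_4$.

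The decisive combinatorial step is the following dichotomy, which I will verify by listing the four quadruples $\mathcal{B}(I_1,I_2)$ attached to a fixed third complex: the two quadruples belonging to edges in the same $K_4$ are disjoint, while any two quadruples belonging to edges in different $K_4$'s share exactly two bitangents. Once this is established, \cref{lem:most4bitangents} closes the argument in one line: if two edges with the same third complex both yielded $1$-dim faces and shared two bitangents $b,b'$, all six bitangents of the two quadruples would pass through the unique point $\vc(b)\cap\vc(b')$, contradicting the lemma. Hence at most two of the four edges with each fixed third complex can be $1$-dim faces, and summing over the three choices of third complex yields the bound $3\cdot 2=6$.

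The main obstacle is the combinatorial bookkeeping for this "disjoint versus share-two" dichotomy; once in place, the geometric step ruling out six bitangents through a common point is immediate.
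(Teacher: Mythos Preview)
Your argument is correct and rests on the same two geometric inputs as the paper's proof: the equivalence from \cref{thm:1dimface} that an edge of the Steiner graph is a $1$-dimensional face if and only if its four common bitangents are concurrent, and the bound of \cref{lem:most4bitangents} that at most four bitangents can pass through a point. The difference lies only in the combinatorial bookkeeping.

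The paper observes directly that each quadruple $\mathcal{B}(I_1,I_2)$ contains a distinguished pair $(b_{1j},b_{2i})$ with $\{i,j\}\in\{\{3,4\},\{5,6\},\{7,8\}\}$, that there are six such pairs, and that the six edges within one $K_4$ realise all six of them. A single pigeonhole then shows that among seven $1$-dimensional faces two must share their distinguished pair, hence share two bitangents, giving six concurrent bitangents. Your organisation by the third Steiner complex $\mS_{I_3}\in\{\mS_{1234},\mS_{1256},\mS_{1278}\}$ is the natural coarsening of this: each of the paper's six colour classes sits inside one of your three classes, and your ``same $K_4$ disjoint / different $K_4$ share two'' dichotomy is exactly what lets you recover the bound $3\cdot 2=6$ from the coarser partition. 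Your route has the pleasant feature of identifying the obstruction with the three real non-psd Steiner complexes $1234,1256,1278$, at the cost of having to verify the dichotomy case by case; the paper's route is one pigeonhole step shorter but hides this structure.
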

\begin{proof}
Let $\mS_1,\,\mS_2$ be two syzygetic Steiner complexes that appear in the Steiner graph. As they are syzygetic, they contain four common bitangents as in \cref{prop:intersection_sc}. If the edge in the Steiner graph connecting the two corresponds to a face of dimension 1 on $\gram(f)$, these four bitangents intersect in a common point.

Looking at the Steiner graph, we see that in every case such a 4-tuple of bitangents contains two bitangents $b_{1j},b_{2i}$ with $\{i,j\}\in\{\{3,4\},\{5,6\},\{7,8\}\}$. Moreover, every of the six edges in one $K_4$ corresponds to a different choice of $\{i,j\}$, namely
\[
b_{13},b_{24},\quad b_{14},b_{23},\quad b_{15},b_{26},\quad b_{16},b_{25},\quad b_{17},b_{28},\quad b_{18},b_{27},
\]
and these are all possibilities.

Assume there are seven or more 1-dimensional faces. Then there exist two 1-dimensional faces corresponding to the 4-tuples of bitangents $b_{1j},b_{2i},l_1,l_2$ and $b_{1j},b_{2i},l_3,l_4$ each intersecting in a common point for some $\{i,j\}\in\{\{3,4\},\{5,6\},$ $\{7,8\}\}$ and some bitangents $l_1,\dots,l_4$.
However, as $b_{1j},b_{2i}$ are contained in both 4-tuples, all bitangents $b_{1j},b_{2i},l_1,l_2,l_3,l_4$ intersect in a common point which is not possible by \cref{lem:most4bitangents}. Note that $l_1,l_2$ and $l_3,l_4$ cannot be the same bitangents as they correspond to different edges in the Steiner graph.
\end{proof}

\begin{rem}
It is not clear if there exists a smooth quartic such that $\gram(f)$ contains six 1-dimensional faces. Even on the \gs\ of the Fermat quartic $x^4+y^4+z^4$, which has a large automorphism group, there are only three 1-dimensional faces.
\end{rem}

This also has implications for rank 5 faces.

\begin{cor}
\label{cor:six_one_dim_faces}
For every smooth $f\in\Sigma$, the \gs\ $\gram(f)$ has a face of rank 5 and dimension 2.
\end{cor}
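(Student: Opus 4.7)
The plan is to combine the structural result on the Steiner graph (\cref{thm:steiner_graph_non_deg}), the bound on the number of one-dimensional faces (\cref{prop:six_one_dim_faces}) and the dichotomy for rank $5$ faces (\cref{cor:possiblefaces}). The key observation is a simple counting argument: the Steiner graph has $12$ edges but $\gram(f)$ carries at most $6$ faces of dimension $1$, so more than half of the edges must sit in the relative interior of a larger face, and that face is forced to be $2$-dimensional of rank $5$.

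More concretely, I would proceed as follows. Fix a smooth $f\in\Sigma$ and let $\theta_1,\theta_2$ be real psd rank $3$ Gram tensors whose Steiner complexes are syzygetic, i.e.\ corresponding to an edge of the Steiner graph. By \cref{lem:Steiner_graph_1} the segment $[\theta_1,\theta_2]$ lies on $\partial\gram(f)$, and the relative interior of this segment consists of tensors of rank $r=\dim(\im\theta_1+\im\theta_2)$. Since $\theta_1\neq\theta_2$ and both have rank $3$, we cannot have $r=3$; and the proof of \cref{lem:Steiner_graph_1} gives $r\le 5$, so $r\in\{4,5\}$.

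Next I would argue by cases. If $r=4$, then every relative interior point of the segment has rank $4$, so its supporting face is a face of rank $4$ containing the segment. Because faces of rank $4$ on $\gram(f)$ have dimension $0$ or $1$ (by the dimension formula in \cref{thm:dim_faces} together with the discussion of rank $4$ faces in the paper), this face is exactly the one-dimensional face $[\theta_1,\theta_2]$. Distinct edges of the Steiner graph give distinct such one-dimensional faces, because their pairs of rank $3$ endpoints differ. By \cref{prop:six_one_dim_faces} at most $6$ edges of the Steiner graph can be of this type. If instead $r=5$, pick any relative interior point $\theta$ of the segment; its supporting face $F_\theta$ has rank $5$ and contains $[\theta_1,\theta_2]$, so $\dim F_\theta\ge 1$. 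By \cref{cor:possiblefaces} a rank $5$ face has dimension $0$ or $2$, hence $\dim F_\theta=2$.

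Since the Steiner graph $K_4\coprod K_4$ (\cref{thm:steiner_graph_non_deg}) has $12$ edges and at most $6$ of them can satisfy $r=4$, at least $12-6=6$ edges satisfy $r=5$, producing (at least one) rank $5$ face of dimension $2$, which proves the corollary. The only step that requires any real care is verifying that each edge with $r=4$ really does give rise to a \emph{distinct} one-dimensional face so that \cref{prop:six_one_dim_faces} applies; this is immediate since rank $3$ tensors on $\gram(f)$ are extreme points and two one-dimensional faces agreeing on both endpoints would coincide. Everything else is bookkeeping on top of the results already established.
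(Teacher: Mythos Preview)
Your proposal is correct and follows essentially the same approach as the paper's own proof, which is the two-line argument ``at most six of the twelve edges in the Steiner graph can have rank~$4$ by \cref{prop:six_one_dim_faces}, so the remaining edges have rank~$5$ and hence (by \cref{cor:possiblefaces}) sit in faces of dimension~$2$.'' You have simply unpacked the implicit steps---ruling out $r=3$, arguing that rank~$4$ faces are at most one-dimensional, and checking that distinct rank~$4$ edges give distinct one-dimensional faces---all of which the paper takes for granted from the surrounding discussion.
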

\begin{proof}
At most 6 faces in the Steiner graph have rank 4 by \cref{prop:six_one_dim_faces}. Therefore, the other faces have rank 5 and dimension 2.
\end{proof}

\begin{prop}
\label{prop:smooth_rk4_tensors}
Let $f\in\Sigma$ be a smooth quartic. Then the \gs\ $\gram(f)$ has an extreme point of rank 4.
\end{prop}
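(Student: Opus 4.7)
The plan is to find a rank $4$ extreme point on the relative boundary of a two-dimensional rank $5$ face. First note that since $f$ is smooth and psd it cannot vanish at any real point (a real zero would be a critical point of $f$ and hence a singularity of $\vc(f)$), so $f$ is positive definite and lies in $\interior\Sigma$; thus $\dim\gram(f)=6$. By \cref{cor:six_one_dim_faces} there is a two-dimensional rank $5$ face $F\subset\gram(f)$, and its relative boundary $\partial F$ is a topological circle consisting of Gram tensors of rank at most $4$.

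\textbf{Dichotomy on $\partial F$.} I would classify the points of $\partial F$ by rank. Rank $3$ points form a subset of the eight real psd rank $3$ tensors of $f$, hence are finite. A rank $4$ point of $\partial F$ that is not extreme in $\gram(f)$ must lie in the relative interior of some one-dimensional face $G$ of $\gram(f)$; since $G\cap F$ is a face of both $G$ and $F$, it is either all of $G$ (so $G$ is an edge of $F$) or a single vertex of $F$. By \cref{prop:six_one_dim_faces} there are at most six such $G$, so the non-extreme rank $4$ points of $\partial F$ are contained in a finite union of closed line segments. If $\partial F$ is not exhausted by these segments and the finite set of rank $3$ points, the complement contains a rank $4$ extreme point of $\gram(f)$ and the proposition follows.

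\textbf{Eliminating the polygonal case.} The main obstacle is to rule out the remaining possibility, in which $F$ is a polygon whose vertices are rank $3$ Gram tensors and whose edges are one-dimensional faces of $\gram(f)$. Any two such vertices, being connected by a boundary segment of $\gram(f)$, lie in the same $K_4$ of the Steiner graph by \cref{thm:steiner_graph_non_deg}, so $F$ has $k\in\{3,4\}$ vertices. For $k=3$: all three pairs of vertices are one-dimensional faces, so \cref{thm:1dimface}(iv) supplies three pairwise syzygetic Steiner complexes sharing four common bitangents meeting at a single point. The images $U_i=\im(\theta_i)$ are spanned by pairwise products of these bitangents and a common form $q\in A_2$; since the four bitangents span only a two-dimensional subspace $V\subset A_1$ and $q\notin\sy{V}$ (otherwise $f$ would depend on only two linear forms and $\vc(f)$ would be singular), the sum $U_1+U_2+U_3$ has dimension at most $4$, contradicting that it equals the $5$-dimensional image $U^*$ of $F$. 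For $k=4$: a short dimension chase in $U^*$ using the edges ($\dim(U_i+U_{i+1})=4$) and diagonals ($\dim(U_i+U_j)=5$) shows $U_1\cap U_2\cap U_3\cap U_4=\spn(q)$ for a single $q\in A_2$. Expressing each $\theta_i$ via a QDR $f=qr_i-s_i^2$ with $\spn(q,r_i,s_i)=U_i$, comparison of two indices yields $(s_i-s_j)(s_i+s_j)=q(r_i-r_j)$. If $q$ is irreducible, divisibility forces $s_j\in\spn(s_1,q)$ and hence $U_j=\spn(q,r_1,s_1)=U_1$ for every $j$, contradicting that the four $U_i$ are distinct. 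The reducible sub-cases $q=\ell^2$ and $q=\ell m$ are the subtlest step: one first argues that each linear factor must be a bitangent of $f$ (since $\ell\mid f+s_i^2$ forces $f\bmod\ell$ to be a perfect square in the quotient), and then a sign-tracking refinement of the divisibility argument again collapses the four images to one, completing the contradiction.
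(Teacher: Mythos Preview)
Your overall strategy---choose a two-dimensional rank~5 face $F$ and examine its boundary---is exactly the paper's. The difficulty is in how you eliminate the polygonal case.

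\textbf{The $k=3$ argument is wrong.} You claim that \cref{thm:1dimface}(iv) identifies the three vertices of the triangle with a syzygetic triad $\mS_1,\mS_2,\mS_3$ sharing four common bitangents. It does not. Condition~(iv) asserts the \emph{existence} of such a triad, but the third member of the triad determined by any two psd Steiner complexes in a $K_4$ is always one of $1234,1256,1278$---the non-psd real tensors of \cref{thm:psd_gram_tensors}. (For instance, $1357$ and $1368$ determine $1234$.) So three psd rank~3 tensors never share four common bitangents, and your computation of $\dim(U_1+U_2+U_3)\le 4$ collapses. The three edges of the triangle give three \emph{different} quadruples of concurrent bitangents, and nothing you have written bounds the sum of the images by~$4$.

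\textbf{The paper's route is much shorter.} The face $F$ is not just any two-dimensional face: it arises from a rank~5 edge of the Steiner graph, so it contains two rank~3 tensors $\theta_1,\theta_2$ with the segment $[\theta_1,\theta_2]$ in the \emph{relative interior} of $F$. In a triangle every pair of vertices is joined by a boundary edge, so $k=3$ is impossible immediately and $F$ has at least four edges. Those four one-dimensional faces all lie in one $K_4$; since there are at most six one-dimensional faces in total (\cref{prop:six_one_dim_faces}), the other $K_4$ has at most two. Any rank~5 edge in that second $K_4$ (at least four exist) yields a two-dimensional face $F'$ that cannot be a polygon, so $\partial F'$ contains a rank~4 extreme point. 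This avoids your $k=4$ QDR analysis entirely, including the reducible sub-cases you yourself flag as incomplete.
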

\begin{proof}
By \cref{prop:six_one_dim_faces} there are at most six one-dimensional faces. Let $F$ be any 2-dimensional face in the Steiner graph, say between vertices $\theta_1$ and $\theta_2$. If $F$ is not a polytope, there exists a 1-dimensional \sa\ set of rank 4 extreme points on the boundary of $F$.
Assume $F$ is a polytope, then $F$ is bounded by four 1-dimensional faces. (Note that three are not enough as the line segment connecting $\theta_1$ and $\theta_2$ has to be in the interior of $F$.)

However, now in the second $K_4$ of the Steiner graph there are at most two 1-dimensional faces. Especially any 2-dimensional face in that $K_4$ is not a polytope.
\end{proof}

\begin{rem}
It is true more generally that there are no polyhedral faces of dimension 2 or higher on \gsa\ of (not necessarily smooth) ternary quartics.
\end{rem}

\begin{rem}
\label{rem:ternary_summary}
By generalizing \cite[Corollary 5.4.]{scheiderer2018} we can also calculate the dimension of all rank strata:
We see that for all $f$ in an open, dense, \sa\ subset of $\Sigma$, the \sa\ set of all rank 4 extreme points of $\gram(f)$ has dimension 3, and the set of rank 5 extreme points is dense in the boundary of $\gram(f)$ and has dimension 5. Moreover, the set of all rank 5 points contained in the relative interior of a face of dimension 2 forms a \sa\ set of dimension 4.

Summarizing, we have the following table for $f\in\interior\Sigma$. The first column gives all possible ranks of Gram tensors of $f$. The second one contains all possible dimension of the supporting face of a Gram tensor such rank. The third column gives the dimension of the \sa\ set of all rank $r$ tensors lying in the interior of a face of dimension $s$, and the last one tells us for which ternary quartics such faces appear.\\

\begin{tabular}{c|c|c|c}
rank & dimension of face & dimension of \sa\ set & appearance\\
\hline
3 & 0 & 0 & every smooth $f$\\
4 & 0 & 3 (generic $f$) & every smooth $f$\\
4 & 1 & 1 & only if $2|\#\Aut(f)$\\
5 & 0 & 5 (generic $f$) & every smooth $f$\\
5 & 2 & 4 (generic $f$) & every smooth $f$
\end{tabular}
\end{rem}

\begin{example}
We consider the Fermat quartic $f=x^4+y^4+z^4$. The automorphism group of $f$ has order 96 and can be found in \cite[Theorem 6.5.2.]{dolgachev2012}. We therefore should expect to find several 1-dimensional faces on $\gram(f)$. This is true, and we can even visualize a 3-dimensional slice of $\gram(f)$ where we can see one of the two $K_4$ in the Steiner graph as well as the 2-dimensional faces (see \cref{fig:fermat}).

We consider the 3-dimensional affine slice of $\gram(f)$ spanned by the four rank 3 Gram tensors in one $K_4$ of the Steiner graph of $f$. More precisely, it is the $K_4$ containing the Gram tensor corresponding to the sos representation $f=(x^2)^2+(y^2)^2+(z^2)^2$.
In \cref{fig:fermat} we see the algebraic boundary of this affine slice.
It is given by the determinant of the matrix pencil 
\[
G(\lambda_1,\lambda_2,\lambda_3)=\theta_0+\lambda_1(\theta_1-\theta_0)+\lambda_2(\theta_2-\theta_0)+\lambda_3(\theta_3-\theta_0)
\]
where
{\small
\begin{align*}
\theta_0&=\begin{pmatrix}
1 & 0 & 0 & 0 & 0 & 0\\
0 & 1 & 0 & 0 & 0 & 0\\
0 & 0 & 1 & 0 & 0 & 0\\
0 & 0 & 0 & 0 & 0 & 0\\
0 & 0 & 0 & 0 & 0 & 0\\
0 & 0 & 0 & 0 & 0 & 0
\end{pmatrix},\quad
\theta_1=\begin{pmatrix}
1 & -1 & 0 & 0 & 0 & 0 \\
-1 & 1 & 0 & 0 & 0 & 0 \\
0 & 0 & 1 & 0 & 0 & 0 \\
0 & 0 & 0 & 2 & 0 & 0 \\
0 & 0 & 0 & 0 & 0 & 0 \\
0 & 0 & 0 & 0 & 0 & 0
\end{pmatrix},
\end{align*}
\begin{align*}
\theta_2&=\begin{pmatrix}
1 & 0 & -1 & 0 & 0 & 0 \\
0 & 1 & 0 & 0 & 0 & 0 \\
-1 & 0 & 1 & 0 & 0 & 0 \\
0 & 0 & 0 & 0 & 0 & 0 \\
0 & 0 & 0 & 0 & 2 & 0 \\
0 & 0 & 0 & 0 & 0 & 0
\end{pmatrix},\quad
\theta_3=\begin{pmatrix}
1 & 0 & 0 & 0 & 0 & 0 \\
0 & 1 & -1 & 0 & 0 & 0 \\
0 & -1 & 1 & 0 & 0 & 0 \\
0 & 0 & 0 & 0 & 0 & 0 \\
0 & 0 & 0 & 0 & 0 & 0 \\
0 & 0 & 0 & 0 & 0 & 2
\end{pmatrix}.
\end{align*}}

The part that is contained in $\gram(f)$ is the inner convex part of the red orthant, or more precisely, the (closure of the) connected component of the complement of the algebraic boundary containing the tensor $\frac{1}{4}\sum_{i=0}^3 \theta_i$. 
The Gram tensor $\theta_0$ corresponding to the representation $f=(x^2)^2+(y^2)^2+(z^2)^2$ is the intersection point of the three hyperplanes and lies in the middle of this figure. The other three rank 3 psd tensors in the same $K_4$ in the Steiner graph are $\theta_1,\theta_2,\theta_3$. All of them are connected to $\theta_0$ via a rank 4 and dimension 1 face which is the line segment contained in the intersection of two of the hyperplanes. All other line segments connecting two rank 3 tensors are contained in a 2-dimensional face, each contained in one of the three hyperplanes.

Contrary to what we might expect at first, due to the size of the automorphism group, the other $K_4$ in the Steiner graph of $f$ does not contain any 1-dimensional faces at all.

\begin{figure}[h]
\centering
\begin{tikzpicture}
\node[inner sep=0pt] (3dim) at (0,0)
    {\includegraphics[scale=0.25]{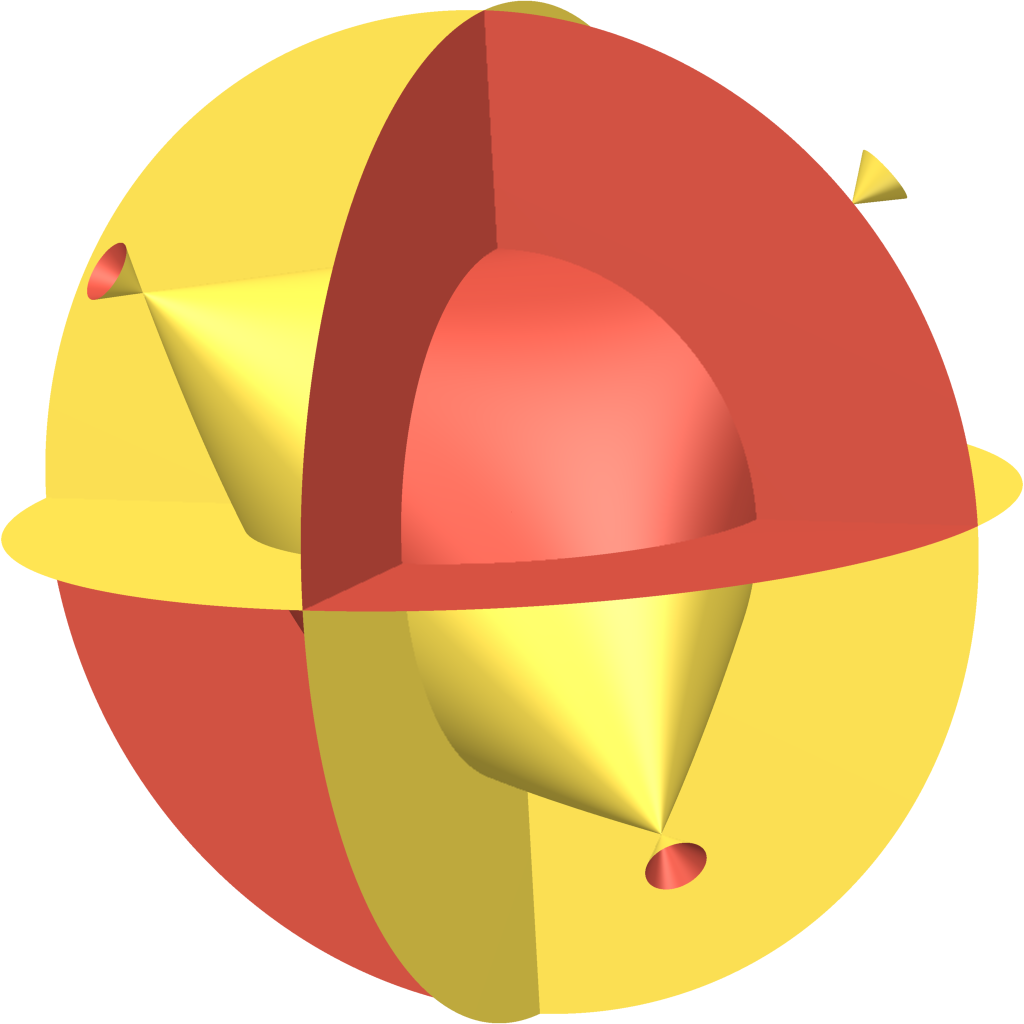}};
\node[circle,fill=black,inner sep=0pt,minimum size=3pt,label=below:{$\theta_2$}] (a) at (2.56*0.84,-0.08*0.84) {};
\node[circle,fill=black,inner sep=0pt,minimum size=3pt,label=above right:{$\theta_3$}] (a) at (-0.15*0.83,2.8*0.83) {};
\node[circle,fill=black,inner sep=0pt,minimum size=3pt,label=below:{$\theta_0$}] (a) at (0,0) {};
 \node[circle,fill=black,inner sep=0pt,minimum size=3pt,label=below:{$\theta_1$}] (a) at (-1.17*0.82,-0.55*0.82) {};
 \draw[dashed] (-1.17*0.82,-0.55*0.82) -- (0,0);
 \draw[dashed] (2.56*0.84,-0.08*0.84) -- (0,0);
 \draw[dashed] (-0.15*0.83,2.8*0.83) -- (0,0);
\end{tikzpicture}
\caption{The algebraic boundary of the \gs\ of the Fermat quartic intersected with the 3-dimensional affine space spanned by the four rank 3 psd Gram tensors $\theta_0,\dots,\theta_3$.}
\label{fig:fermat}
\end{figure}
\end{example}

\bibliographystyle{plain}
\bibliography{mybib}
\end{document}